\newtheorem{teo}{Theorem}[section]
\newtheorem{lem}[teo]{Lemma}
\newtheorem{coro}[teo]{Corollary}
\newtheorem{defi}[teo]{Definition}
\newtheorem{remark}[teo]{Remark}
\newcommand{\re}{\mathbb{R}}
\newcommand{\ds}{\displaystyle}
\begin{document}
\thispagestyle{empty}

\title{\textbf{\Large{On the Birkhoff Spectrum for Hyperbolic Dynamics}}}
\author{ Sergio Roma\~na}
\date{}
\maketitle
\begin{abstract}
In this paper, we study the structure of Birkhoff spectra for hyperbolic dynamical systems. Given a H\"older observable \(f\) on a basic set \(\Lambda\), we obtain the following results: First, we characterize when the Birkhoff spectrum of \(f\) is dense in the positive (or negative) real line. Second, we prove that a bounded Birkhoff spectrum forces \(f\) to be cohomologous to zero, which constitutes an extension of Liv\v{s}ic's theorem. Moreover, we show that if the spectrum exhibits an ``arithmetically sparse'' structure, then \(f\) is cohomologous to a constant. \\
\indent We then extend these results to continuous time. For Anosov flows---including geodesic flows on Anosov manifolds---we establish analogous density results for Birkhoff integrals over closed orbits. In particular, we generalize a theorem of Dairbekov--Sharafutdinov \cite{Dairbekov} by proving that a bounded (resp.~arithmetically sparse) spectrum forces a smooth function to vanish (resp.~be constant).

\begin{quote}
\noindent\textbf{Keywords: Birkhoff spectrum, hyperbolic dynamics, basic sets, periodic orbits, Liv\v{s}ic theorem, Anosov flows, geodesic flows.}
\end{quote}
\end{abstract}

\section{Introduction}
The study of asymptotic invariants associated with dynamical systems is central to understanding their statistical and geometric properties. A classical invariant is the set of \emph{Birkhoff sums} along periodic orbits. Given a discrete dynamical system \(\varphi \colon M \to M\) and a continuous function \(f \colon M \to \mathbb{R}\), the \emph{Birkhoff spectrum} is defined as
\[
\mathcal{B}(f,\varphi)=\Big\{S(f, \varphi, p): p\in \operatorname{Per}(\varphi)\Big\},
\]
where 
\[
S(f, \varphi, p):=\sum_{i=0}^{\tau_p-1}f(\varphi^i(p)),\quad
\operatorname{Per}(\varphi)=\text{set of periodic points},
\]
and \(\tau_p\) is the period of \(p\).  

When the dynamics are restricted to an invariant subset \(\Lambda\), the corresponding set is denoted by \(\mathcal{B}(f,\varphi, \Lambda)\).

These objects have a long history in ergodic theory and differentiable dynamics. In particular, the celebrated Liv\v{s}ic theorem \cite{Livsic} characterizes H\"older functions with vanishing Birkhoff spectrum on hyperbolic sets as those cohomologous to zero. More recently, the structure of \(\mathcal{B}(f,\varphi)\)---especially its density and distribution properties---has been studied for Anosov systems and basic sets \cite{Dairbekov,Shaobo,Sigmund2}. Such questions are deeply linked to the distribution of periodic orbits and the flexibility of invariant measures supported on them.

This work, inspired by \cite{Shaobo}, explores the density of Birkhoff spectra for some H\"older continuous observables, establishing new rigidity results for these spectra on basic sets of diffeomorphisms and flows.


\subsection*{Main Results for diffeomorphisms}
A celebrated result, Liv\v{s}ic's Theorem \cite{Livsic}, characterizes when a H\"older continuous function \(f\) has a trivial Birkhoff spectrum on a basic set, i.e., when \(\mathcal{B}(f, \varphi, \Lambda) = 0\). It establishes that this occurs if and only if \(f\) is cohomologous to a constant---specifically, if there exists a H\"older continuous function \(u\) such that \(f = u \circ \varphi - u\). In this case, we say that \(f\) is \emph{cohomologous to zero}.\\
\indent Our first result establishes a sharp rigidity principle for H\"older observables
on basic sets: the boundedness of the Birkhoff spectra \(\mathcal{B}(f,\varphi,\Lambda)\)
forces the observable to be cohomologous to a constant. More specifically,
\begin{teo}\label{mainT}
Let \(\Lambda\subset U\) be a basic set for \(\varphi \colon U \to M\) and let \(f \colon U \to \mathbb{R}\) be H\"older continuous. The following are equivalent:
\begin{itemize}
\item[\emph{(i)}] \(\mathcal{B}(f,\varphi, \Lambda)\) is bounded;
\item[\emph{(ii)}] \(f\) is cohomologous to zero;
\item[\emph{(iii)}] \(\mathcal{B}(f,\varphi, \Lambda)=\{0\}\).
\end{itemize}
\end{teo}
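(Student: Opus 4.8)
\emph{Strategy.} First dispose of the easy implications: (ii) $\Rightarrow$ (iii) $\Rightarrow$ (i). If $f=u\circ\varphi-u$ with $u$ continuous, then for every $p\in\operatorname{Per}(\varphi|_\Lambda)$ of period $\tau_p$ the sum telescopes,
\[
S(f,\varphi,p)=\sum_{i=0}^{\tau_p-1}\big(u(\varphi^{i+1}p)-u(\varphi^i p)\big)=u(\varphi^{\tau_p}p)-u(p)=0 ,
\]
so $\mathcal B(f,\varphi,\Lambda)=\{0\}$ (nonempty, since a basic set carries periodic points), which is bounded. Moreover (iii) $\Rightarrow$ (ii) is exactly Liv\v{s}ic's theorem recalled above. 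Hence the whole content of the statement is the implication (i) $\Rightarrow$ (iii): \emph{if $\mathcal B(f,\varphi,\Lambda)$ is bounded, then $\mathcal B(f,\varphi,\Lambda)=\{0\}$}; feeding this into Liv\v{s}ic then yields (ii) and closes the cycle. Throughout we use that $\Lambda$ is a \emph{nontrivial} basic set (not a single periodic orbit), as is needed for the statement itself to hold.

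\emph{The implication (i) $\Rightarrow$ (iii).} The plan is to argue by contradiction. Assume some $p\in\operatorname{Per}(\varphi|_\Lambda)$ of period $\tau:=\tau_p$ has $c:=S(f,\varphi,p)\neq 0$, and, replacing $f$ by $-f$, assume $c>0$. I would then manufacture, for every $n\in\mathbb N$, a genuinely new periodic orbit whose Birkhoff sum is of order $nc$, contradicting boundedness. Concretely: since $\Lambda$ is a nontrivial basic set, after restricting to a topologically mixing piece of its spectral decomposition and, if needed, passing to a suitable power (operations that do not change Birkhoff sums over periodic orbits), the map satisfies the periodic specification property; so there are $T\in\mathbb N$ and, for each $n$, a periodic point $q_n\in\Lambda$ of period $n\tau+T$ whose orbit $\varepsilon$-shadows $n$ consecutive turns around the orbit of $p$ and then a bridge of length $T$ closing it up. (Equivalently, code $\Lambda$ by a Markov partition and take $q_n$ to be the point of the admissible periodic word $(a_0\cdots a_{\tau-1})^n w$, where $a_0\cdots a_{\tau-1}$ codes $p$ and $w$ is a fixed connecting word supplied by mixing; for large $n$ this word is primitive, so $n\tau+T$ is indeed the period of $q_n$.) Writing
\[
S(f,\varphi,q_n)=\sum_{i=0}^{n\tau-1}f(\varphi^i q_n)+\sum_{i=n\tau}^{n\tau+T-1}f(\varphi^i q_n),
\]
the second sum is $O(1)$ (bounded by $T\|f\|_\infty$) and the first equals $nc$ up to the shadowing error, so $S(f,\varphi,q_n)\to+\infty$ — the desired contradiction.

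\emph{Where the difficulty lies.} The hard part will be the uniform control of the shadowing error $E_n:=\big|\sum_{i=0}^{n\tau-1}f(\varphi^i q_n)-\sum_{i=0}^{n\tau-1}f(\varphi^i p)\big|$: the crude bound gives $E_n\le n\tau\cdot\omega_f(\varepsilon)$ with $\omega_f$ the modulus of continuity of $f$, which does not obviously beat $nc$. I see two remedies. (a) Fix $\varepsilon>0$ small, \emph{depending only on $c$ and $\tau$}, so that $d(x,y)<\varepsilon\Rightarrow|f(x)-f(y)|<c/(2\tau)$; then $E_n\le n\tau\cdot c/(2\tau)=nc/2$, whence $S(f,\varphi,q_n)\ge nc/2-T\|f\|_\infty$, which tends to $\infty$. (b) Pass to the symbolic model, where a H\"older observable has exponentially decaying variation over cylinders; then the per-turn contributions to $E_n$ form a convergent geometric series, so $E_n$ stays \emph{bounded uniformly in $n$} and one even obtains $S(f,\varphi,q_n)=nc+O(1)$. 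Route (b) is the one I would take, since it also streamlines the bookkeeping of periods. The ancillary items — the spectral-decomposition reduction to the mixing case, the existence of the connecting word/bridge (which uses that $\Lambda$ is infinite), and the primitivity ensuring the constructed orbits have the claimed period — are standard and routine; the real crux is the uniform-in-$n$ error estimate just described. Once (i) $\Rightarrow$ (iii) is in hand, Liv\v{s}ic's theorem upgrades it to (ii), completing the equivalence.
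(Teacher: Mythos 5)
Your proof is correct and takes a genuinely different route from the paper's. The paper proves the contrapositive of (i)~\(\Rightarrow\)~(ii) by a two-case dichotomy: if all periodic averages \(S(f,\varphi,p)/\tau_p\) equal a common \(\alpha\) (necessarily nonzero, by Liv\v{s}ic), then \(\mathcal{B}(f,\varphi,\Lambda)=\{\alpha\tau_p : p\in\operatorname{Per}(\varphi)\}\) is unbounded since periods in a basic set are; otherwise, a number-theoretic lemma (Lemma~\ref{Lmain2*}) furnishes a constant \(\beta\) strictly between two distinct averages for which \(f-\beta\) is dispersed and non-arithmetic, and the density theorem for Birkhoff spectra on basic sets (Theorem~\ref{Shaobo}, Gan--Shi--Xia) then produces periodic points with \(S(f,\varphi,p)=S(f-\beta,\varphi,p)+\beta\tau_p\) of arbitrarily large magnitude. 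You instead prove (i)~\(\Rightarrow\)~(iii) directly by a specification/shadowing construction: from a single periodic orbit with \(S(f,\varphi,p)=c\neq 0\) you manufacture periodic points \(q_n\) whose orbits wind \(n\) times near \(p\) before a bridge closes them up, obtaining \(S(f,\varphi,q_n)=nc+O(1)\to\infty\); your route~(b), using the exponential decay of H\"older variations along the shadowed stretch, is exactly the right way to make the error uniform in \(n\) (route~(a) also works with \(\varepsilon\) depending only on \(c\) and \(\tau\)). Your argument is more elementary and self-contained --- it sidesteps both the density theorem and the entire arithmetic/non-arithmetic machinery, and exhibits the blow-up mechanism explicitly --- at the cost of the routine but nontrivial bookkeeping you flag (reduction to a mixing piece after replacing \(\varphi\) and \(f\) by suitable iterates and sums, existence of a bridge, primitivity of \((a_0\cdots a_{\tau-1})^n w\) for large \(n\) so that \(n\tau+T\) is the genuine minimal period). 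Both proofs tacitly require \(\Lambda\) to contain more than a single periodic orbit; you make this assumption explicit, while the paper relies implicitly on the unboundedness of periods in a basic set.
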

This equivalence turns a mild, qualitative condition---boundedness---into a powerful geometric constraint, 
showing that the only way the set of Birkhoff integrals can avoid being unbounded 
is if all invariant measures yield the same average. In particular, it extends 
Liv\v{s}ic's theorem from trivial spectrum case to arbitrary boundedness condition and reveals 
a fundamental dichotomy: either \(\mathcal{B}(f,\varphi,\Lambda)\) is a singleton, 
or it is unbounded. This result lies at the heart of the geometric theory of hyperbolic dynamics, with direct implications for thermodynamic formalism, and the multifractal analysis of Birkhoff spectra.

Using this result we obtain all the solutions obtain in \cite[Theorem 1.1]{ZouWei} for the finite approximation of Liv\v sic Theorem are trivial (see Section \ref{Proof mainT}).\\
\ \\
To state our second result, we introduce a classification of observables based on their spectral support. A continuous observable \(f\) is termed \emph{dispersed} if its Birkhoff spectrum intersects both the positive and negative reals, i.e.,
\[
\mathcal{B}(f,\varphi, \Lambda)\cap \mathbb{R}^+ \neq \emptyset \quad \text{and} \quad \mathcal{B}(f,\varphi, \Lambda)\cap \mathbb{R}^- \neq \emptyset.
\]
Conversely, if \(\mathcal{B}(f,\varphi, \Lambda) \subset \mathbb{R}_{\geq 0}\) or \(\mathcal{B}(f,\varphi, \Lambda) \subset \mathbb{R}_{\leq 0}\), we call \(f\) \emph{concentrated}.\\
Moreover, a continuous observable \(f\) is called \emph{arithmetic} if there is $c\in\re$ with 
$$\mathcal{B}(f,\varphi, \Lambda) \subset c\mathbb{Z}.$$
In \cite{Shaobo}, it was shown that for transitive Anosov diffeomorphisms and any dispersed observable, the Birkhoff spectrum is dense in \(\mathbb{R}\). In the same paper, the density result was obtained for basic sets (instead of Anosov systems) for dispersed and non-arithmetic observable (see Theorem \ref{Shaobo} in Section \ref{New Shaobo} for details). However, in Section \ref{example}, we provide an example of a basic set and a dispersed non-arithmetic observable for which density of the Birkhoff spectra \emph{fails}. This suggests that a sufficient condition for density likely involves additional topological restrictions on the basic set. The techniques developed in the proofs of Theorem \ref{Shaobo} and Theorem \ref{mainT} can also be used to study the \emph{average Birkhoff spectrum} associated with a H\"older observable (see Section \ref{measure spectra} for details).\\

Concentrated observables preclude spectral density in $\re$, our subsequent result provides sufficient conditions to achieve density in the maximal admissible subset.
\begin{teo}\label{mainT2}
Let \(\Lambda\subset U\) be a basic set for \(\varphi \colon U \to M\) and let \(f \colon U \to \mathbb{R}\) be a  H\"older continuous concentrated function. Then, if 
    \(f\) is not cohomologous to zero and \(\inf \mathcal{B}(f,\varphi, \Lambda) = 0\) \emph{(}resp. $\sup \mathcal{B}(f,\varphi, \Lambda) = 0$\emph{)} if and only if \(\mathcal{B}(f,\varphi, \Lambda)\) is dense in \([0,\infty)\) \emph{(}resp.  \((-\infty,0]\)\emph{)}. 
\end{teo}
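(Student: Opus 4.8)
The plan is to prove the two implications separately, treating only the case $\inf \mathcal{B}(f,\varphi,\Lambda)=0$ (the other being symmetric, replacing $f$ by $-f$). The easy direction is ``density $\Rightarrow$ hypotheses'': if $\mathcal{B}(f,\varphi,\Lambda)$ is dense in $[0,\infty)$ then trivially $\inf \mathcal{B}(f,\varphi,\Lambda)=0$, and $f$ cannot be cohomologous to zero, since by Liv\v{s}ic's theorem that would force $\mathcal{B}(f,\varphi,\Lambda)=\{0\}$, which is not dense in $[0,\infty)$. So the content is the converse: assuming $f$ is concentrated (say $\mathcal{B}(f,\varphi,\Lambda)\subset \mathbb{R}_{\geq 0}$), not cohomologous to zero, and $\inf\mathcal{B}(f,\varphi,\Lambda)=0$, I want density in $[0,\infty)$.

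The key mechanism is an approximation/concatenation argument on periodic orbits, the same circle of ideas underlying Theorem~\ref{mainT} and the density results imported from \cite{Shaobo}. First, since $f$ is not cohomologous to zero, Theorem~\ref{mainT} tells us $\mathcal{B}(f,\varphi,\Lambda)$ is unbounded; combined with concentration, $\sup\mathcal{B}(f,\varphi,\Lambda)=+\infty$. Second, $\inf\mathcal{B}(f,\varphi,\Lambda)=0$ gives, for every $\varepsilon>0$, a periodic point $q$ with $0\le S(f,\varphi,q)<\varepsilon$; this ``short'' orbit is the fine ruler. The heart of the proof is then a shadowing/specification argument: given a target value $t>0$ and tolerance $\delta>0$, pick a periodic point $p$ with large Birkhoff sum, and then build a new periodic pseudo-orbit that follows $p$ a controlled number $k$ of times and $q$ a controlled number $m$ of times, so that by the shadowing lemma it is $\varepsilon'$-shadowed by a genuine periodic point $z$ whose Birkhoff sum is close to $k\,S(f,\varphi,p)+m\,S(f,\varphi,q)$ up to an error controlled by the H\"older norm of $f$ and the shadowing constant. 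Since $S(f,\varphi,q)<\varepsilon$ is small and we may run $q$ a variable number of times, these combinations are $\varepsilon$-dense in $[\,S(f,\varphi,p),\infty)$ modulo the shadowing error; letting $S(f,\varphi,p)$ shrink toward $0$ along with $\varepsilon$ and the shadowing tolerance, we fill out $[0,\infty)$ densely. Crucially, concentration guarantees that nothing in this construction can overshoot into negative territory and cancel, so the estimates stay one-signed and genuinely produce values throughout $[0,\infty)$.

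The main obstacle is making the concatenation produce a genuine periodic orbit with a Birkhoff sum that is \emph{quantitatively} close to the prescribed target, and not merely realize \emph{some} value near it. Two points need care. (a) One must control the shadowing error $\bigl|S(f,\varphi,z)-(k\,S(f,\varphi,p)+m\,S(f,\varphi,q))\bigr|$ uniformly, which requires the standard estimate that along an exponentially shadowing orbit the difference of Birkhoff sums of a H\"older $f$ is bounded by a constant depending only on $\operatorname{H\ddot{o}l}(f)$, the contraction rate, and the number of ``transition blocks'' (here $O(k+m)$ of them, each contributing a summable geometric tail) --- so one should instead use the \emph{periodic specification / Anosov closing} form where the error is bounded \emph{independently of the orbit length}, by concatenating with a bounded number of transitions; this may force working with a fixed $p$ and inserting copies of $q$ only between two fixed anchor blocks. (b) One must ensure $\inf = 0$ is actually attained in the limit with arbitrarily small positive values \emph{and} that the family $\{S(f,\varphi,q)\}$ of such small sums, scaled by integer multiples, together with one large sum, has the additive reach to approximate every $t>0$; this is where $\inf\mathcal{B}=0$ is used not just as a single small value but as a sequence $\varepsilon_n\to 0$ to refine the mesh. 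Handling the interaction of these two error sources --- choosing $\varepsilon_n$ small relative to the (fixed) shadowing constant, then choosing the large orbit and the repetition count --- is the delicate bookkeeping, but it is routine once the one-sidedness from concentration is in hand.
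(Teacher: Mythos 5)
Your easy direction (density implies not cohomologous to zero and infimum zero) coincides with the paper's. The hard direction, however, is organized around a different pair of orbits than the paper uses, and this difference is where a genuine gap appears.

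You propose to combine a \emph{large} orbit $p$ (invoking Theorem~\ref{mainT} to get $\sup\mathcal{B}=+\infty$) with many copies of a \emph{small} orbit $q$ obtained from $\inf\mathcal{B}=0$, and to reach the target $t$ arithmetically as $kS(f,\varphi,p)+mS(f,\varphi,q)$. The obstruction you flag under (a) is exactly the fatal one, and you do not resolve it: the transition error $H$ produced by the shadowing concatenation (the quantity controlled in (\ref{EQ1-Bracket})--(\ref{EQ2-Bracket})) is bounded by $\Gamma\cdot\max\{d(p,[p,q])^\theta,\dots\}$, which by Corollary~\ref{Coro- Bracket} is small \emph{only when $d(p,q)$ is small}. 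A periodic point with a large Birkhoff sum and one with a tiny Birkhoff sum have no reason to be $\delta$-close in $\Lambda$; for such a pair $H$ is merely bounded by $\Gamma\,(\operatorname{diam}\Lambda)^\theta$, a fixed constant that does not shrink with $\varepsilon$. Worse, your step (b) requires replacing $q$ by a sequence $q_n$ with $S(f,\varphi,q_n)\to 0$ to refine the mesh, but each new $q_n$ carries a new, uncontrolled offset $H_{q_n}$, so the realized Birkhoff sums jitter by an $O(1)$ amount rather than converging to the target. ``Routine bookkeeping'' does not close this, because the offset is not a bookkeeping error---it is an $O(1)$ term tied to the geometry of the pair $(p,q)$.

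The paper's construction sidesteps this entirely by \emph{never using a large orbit}. Since $\inf\mathcal{B}(f,\varphi,\Lambda)=0$, there is a sequence $p_n\in\operatorname{Per}(\varphi)$ with $S(f,\varphi,p_n)\to 0$; by compactness of $\Lambda$, one can choose two terms $p:=p_n$, $q:=p_{n'}$ of this sequence with $d(p,q)<\delta$, where $\delta$ is the constant from Corollary~\ref{Coro- Bracket}. Both Birkhoff sums are then as small as desired \emph{and} the transition error satisfies $|H|<\varepsilon$ (condition (a) in the paper's proof). The magnitude $A$ is then produced not from a large orbit but by iterating the $q$-block roughly $n\approx A/\bigl(2S(f,\varphi,q)\bigr)$ times, while the $p$-block is run a fixed small number $m_0$ of times contributing $<\varepsilon/5$; the four-block pseudo-orbit of Theorem~\ref{Rigidity} is then shadowed by a genuine periodic orbit whose Birkhoff sum lands in $(A-2\varepsilon,A+2\varepsilon)$. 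So the missing idea in your proposal is precisely: take \emph{both} concatenated orbits from the small-Birkhoff-sum sequence and use compactness to make them close, so the transition error is controlled, and build up to $A$ by repetition rather than by importing a large orbit.
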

This result can be interpreted as a \emph{concentrated induced denseness} phenomenon: positivity (or negativity) of the observable, together with the infimum (or maximum)being zero, forces the range to be as large as possible$-$that is, dense in \([0,\infty)\) (or $(-\infty, 0]$).\\

While Theorem \ref{mainT} shows that \emph{boundedness} of \(\mathcal{B}(f,\varphi,\Lambda)\) implies
that \(f\) is cohomologous to zero, the following result demonstrates that a significantly
weaker restriction$-$namely, the set being contained in a finite union of dilated copies of
\(\mathbb{Z}-\)forces the similar conclusion. This can be interpreted as a \emph{quantization rigidity} 
property: the only way for the Birkhoff spectra to be ``arithmetically sparse'' is 
if all Birkhoff averages coincide.

\begin{teo}\label{Rigidity}
Let \(\Lambda\subset U\) be a basic set for \(\varphi \colon U \to M\) and let \(f\) be a H\"older continuous non-arithmetic observable. If 
\[
\mathcal{B}(f,\varphi, \Lambda) \subset a_1\mathbb{Z} \cup \dots \cup a_l\mathbb{Z}
\quad\text{for some } a_1,\dots,a_l \in \mathbb{R}^{+},
\]
then \(f\) is cohomologous to a constant \(a_{i_0}\) for some \(i_0\).
\end{teo}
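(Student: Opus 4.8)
Since any H\"older observable cohomologous to a constant is in particular arithmetic (its spectrum $\{c\,\tau_p:p\}$ lies in $c\mathbb Z$), the statement is a rigidity principle of the following shape: a genuinely non-arithmetic H\"older observable on a basic set cannot have its Birkhoff spectrum confined to a finite union $L:=a_1\mathbb Z\cup\cdots\cup a_l\mathbb Z$. Accordingly, the plan is to assume $f$ non-arithmetic and $\mathcal B(f,\varphi,\Lambda)\subseteq L$ and derive a contradiction. First I pass to a convenient model: by the spectral decomposition, $\Lambda$ splits into $\mathfrak p$ pieces cyclically permuted by $\varphi$ with $\varphi^{\mathfrak p}$ mixing on each, every period is a multiple of $\mathfrak p$, and replacing $(\varphi,f,\Lambda)$ by $(\varphi^{\mathfrak p}|_{\Lambda_0},\,\sum_{0\le r<\mathfrak p}f\circ\varphi^{r},\,\Lambda_0)$ leaves $\mathcal B$ unchanged while preserving non-arithmeticity and the property of being (not) cohomologous to a constant, the latter through Liv\v{s}ic's theorem. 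Coding the mixing piece by a Markov partition and replacing the H\"older cocycle on the two-sided subshift by the cohomologous one depending only on nonnegative coordinates, I may assume $\varphi$ is a mixing subshift of finite type and $f$ depends on future coordinates only. Finally I record an elementary fact used at the end: with $d_0:=\bigl(4\sum_i a_i^{-1}\bigr)^{-1}>0$, the open set $E:=\{x:\operatorname{dist}(x,L)>d_0\}$ is nonempty and unbounded (a volume count, since $L$ has zero Lebesgue measure and is nowhere dense).

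Next I extract two periodic orbits with rationally independent Birkhoff sums. Non-arithmeticity forces $\mathcal B(f,\varphi,\Lambda)\neq\{0\}$ (otherwise Liv\v{s}ic makes $f$ cohomologous to $0$), so the spectrum contains some $\beta_1\neq0$. If every spectral value were a rational multiple of $\beta_1$ we would have $\mathcal B(f,\varphi,\Lambda)\subseteq\beta_1\mathbb Q\cap L$; but $\beta_1\mathbb Q\cap a_i\mathbb Z=\{0\}$ unless $a_i\in\beta_1\mathbb Q$, in which case $a_i=\tfrac{u_i}{v_i}\beta_1$ in lowest terms yields $a_i\mathbb Z\subseteq\tfrac{\beta_1}{v_i}\mathbb Z$, so $\mathcal B(f,\varphi,\Lambda)\subseteq\tfrac{\beta_1}{V}\mathbb Z$ with $V=\operatorname{lcm}\{v_i\}$, contradicting non-arithmeticity. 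Hence there are periodic orbits $p_1,p_2$ with $\beta_i:=S(f,\varphi,p_i)$ and $\beta_1/\beta_2\notin\mathbb Q$; after possibly replacing $f$ by $-f$ (which fixes $L$ and preserves every hypothesis) I may assume either $\beta_1,\beta_2>0$ or that they have opposite signs.

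The heart of the argument is an orbit-gluing estimate with vanishing error. Using mixing to fix short connecting blocks, I form for each $m,n\ge1$ a periodic point $r_{m,n}$ by concatenating $m$ copies of the $p_1$-word, a fixed bridge, $n$ copies of the $p_2$-word, and a fixed bridge; since $f$ is H\"older and depends only on future coordinates, a telescoping estimate gives
\[
S(f,\varphi,r_{m,n})=m\beta_1+n\beta_2+\delta+\varepsilon_{m,n},\qquad |\varepsilon_{m,n}|\le C\,\theta^{\min(m,n)},
\]
where $\delta=\delta(p_1,p_2,f)$ is a constant and $\theta\in(0,1)$; in particular $\varepsilon_{m,n}\to0$, while $S(f,\varphi,r_{m,n})\in\mathcal B(f,\varphi,\Lambda)\subseteq L$ for all $m,n$. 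Since $\beta_1/\beta_2\notin\mathbb Q$, for every $M$ the set $\{m\beta_1+n\beta_2+\delta:m,n\ge M\}$ is dense in a half-line $H$, namely $[M(\beta_1+\beta_2)+\delta,\infty)$ when $\beta_1,\beta_2>0$ and $\mathbb R$ in the opposite-sign case. I then choose $M$ with $C\,\theta^{\min(m,n)}<d_0/2$ for $m,n\ge M$, a point $x_0\in E\cap H$ lying beyond $M(\beta_1+\beta_2)+\delta$, and --- using density of the above set in $H$ together with openness of $E$ --- integers $m_*,n_*\ge M$ with $m_*\beta_1+n_*\beta_2+\delta\in E$; then
\[
\operatorname{dist}\bigl(S(f,\varphi,r_{m_*,n_*}),L\bigr)\ \ge\ \operatorname{dist}\bigl(m_*\beta_1+n_*\beta_2+\delta,L\bigr)-|\varepsilon_{m_*,n_*}|\ >\ d_0-\tfrac{d_0}{2}\ >\ 0,
\]
so $S(f,\varphi,r_{m_*,n_*})\notin L$, contradicting $\mathcal B(f,\varphi,\Lambda)\subseteq L$ and completing the proof.

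I expect the main obstacle to be the gluing estimate in exactly this form: an error $\varepsilon_{m,n}$ tending to $0$ rather than merely bounded. A bounded error would be useless, since $L$ is $\tfrac12\min_i a_i$-dense in $\mathbb R$, so every real lies that close to $L$; what makes the escape step work is that each block $p_1^m$ (and $p_2^n$) is a genuine orbit segment, hence shadowed with exponential accuracy, combined with the normalization of $f$ to depend on one-sided coordinates, which lets the contributions of the two junctions stabilize as $m,n\to\infty$. A lesser point needing care is verifying that the reduction to $\varphi^{\mathfrak p}|_{\Lambda_0}$ transports exactly the hypotheses that are used --- non-arithmeticity of the induced observable and the Liv\v{s}ic equivalence for it --- but this is routine.
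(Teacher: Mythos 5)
Your proposal is correct, and it shares the orbit-gluing core with the paper but replaces the number-theoretic finale with a genuinely simpler argument. Like the paper you extract two periodic orbits whose Birkhoff sums $\beta_1,\beta_2$ are rationally independent (the paper gets there via its Claim on the $a_i$'s plus a choice of $p,q$; your derivation is more direct and avoids an implicit gap in the paper's choice, since it is not obvious a priori that periodic points with $S(f,\varphi,p)\in a_{i_1}\mathbb{Z}$ and $S(f,\varphi,q)\in a_{i_2}\mathbb{Z}$ exist simply because $a_{i_1}/a_{i_2}\notin\mathbb{Q}$). You then concatenate $m$ and $n$ copies to build a two-parameter family of periodic orbits with
\[
S(f,\varphi,r_{m,n})=m\beta_1+n\beta_2+\delta+\varepsilon_{m,n},\qquad |\varepsilon_{m,n}|\le C\theta^{\min(m,n)},
\]
exactly as in the paper's estimate $2mS(f,\varphi,p)+2nS(f,\varphi,q)+H$ with exponentially small error; you achieve this through a Markov coding and the one-sided normalization of the cocycle, the paper through the shadowing lemma --- these are equivalent routes to the same estimate. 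Where you genuinely diverge is the contradiction. The paper fixes $m=\lfloor\beta n\rfloor$ for a carefully chosen rational $\beta$ (Lemma~\ref{Lemma4-Rigidity}), pigeonholes to a positive-upper-density set of $n$ landing in a single $a_{i_0}\mathbb{Z}$ (Lemma~\ref{LEMMA3NEW-RIG}), and then invokes Weyl equidistribution mod~1 (Lemma~\ref{LEMMA2NEW-RIG}) to prevent $a_{i_0}G(n)-(am+bn)$ from clustering in a small interval (Lemma~\ref{Lemma3-Rigidity}). You instead keep $(m,n)$ free, use irrationality of $\beta_1/\beta_2$ to get density of $\{m\beta_1+n\beta_2+\delta\}$ in a half-line, and observe by a simple density count that the set $E=\{x:\operatorname{dist}(x,a_1\mathbb{Z}\cup\cdots\cup a_l\mathbb{Z})>d_0\}$ is open and unbounded for $d_0=(4\sum_i a_i^{-1})^{-1}$, so some glued-orbit Birkhoff sum lands strictly outside $L$ --- immediate contradiction. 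This bypasses all four number-theoretic lemmas of the paper and is shorter. One small inaccuracy to fix: the justification ``$L$ has zero Lebesgue measure and is nowhere dense'' is not by itself enough to conclude $E\neq\emptyset$ and unbounded (a nowhere-dense null set can still have a $d_0$-neighborhood of full density if $d_0$ is too large); what actually works is the explicit density count that the $d_0$-neighborhood of $L$ has lower density at most $2d_0\sum_i a_i^{-1}\le 1/2$, which is precisely why your specific choice of $d_0$ matters --- state that count rather than the nowhere-dense remark.
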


The non-arithmeticity condition on \(f\) is necessary, as shown in Section \ref{example}.


\subsection*{Extensions to flows}
The last part of the paper deals with continuous-time systems. For a flow \(\phi^t\), a closed orbit \(\mathcal{O}_\theta\) of period \(\tau_\theta\), and a parametrization $\mathcal{O}_\theta(s)$ define the integral
\[
\oint_{\mathcal{O}_\theta} f = \int_{0}^{\tau_\theta} f(\mathcal{O}_\theta(s))\,ds,
\]
and the Birkhoff spectrum
\[
\mathcal{B}(f,\phi^t)=\Big\{\oint_{\mathcal{O}_{\theta}} f : \theta \in \operatorname{Per}(\phi^t)\Big\}.
\]
We obtain the following analogue of Theorem \ref{Shaobo}.
\begin{teo}\label{MainT2}
Let \(\phi^t \colon U \to N\) be a flow, \(\Lambda\subset U\) a basic set for \(\phi^t\), and \(f \colon U \to \mathbb{R}\) a dispersed and non-arithmetic H\"older continuous function. Then \(\mathcal{B}(f,\phi^t)\) is dense in \(\mathbb{R}\). In particular, for Anosov flows (including transitive ones) the conclusion holds without requiring the non-arithmeticity condition.
\end{teo}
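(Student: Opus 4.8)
The plan is to reduce the continuous-time statement to the discrete-time Theorem \ref{Shaobo} by passing to a Markov section. First I would fix a Markov family of local cross-sections for $\phi^{t}|_{\Lambda}$ (Bowen, Ratner), so that $(\Lambda,\phi^{t})$ is represented, up to a Hölder conjugacy, by the suspension flow of a Poincaré first-return map $P\colon\Xi\to\Xi$ over a basic set $\Xi$ (a subshift of finite type) with Hölder return-time roof $r\colon\Xi\to\mathbb{R}^{+}$. I would then introduce the integrated observable
\[
F(x)=\int_{0}^{r(x)} f\bigl(\phi^{s}(x)\bigr)\,ds ,
\]
which is Hölder continuous since $f$ and $r$ are Hölder and $\phi$ is $C^{1}$. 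Under the induced bijection between closed orbits $\mathcal{O}_{\theta}$ of $\phi^{t}|_{\Lambda}$ and periodic orbits $x_{\theta}$ of $P$, splitting the integral of $f$ along $\mathcal{O}_{\theta}$ at the successive return times to the section gives
\[
\oint_{\mathcal{O}_{\theta}} f = S(F,P,x_{\theta}),
\]
so that $\mathcal{B}(f,\phi^{t})=\mathcal{B}(F,P,\Xi)$ as subsets of $\mathbb{R}$.

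With this identification the hypotheses transfer verbatim: $f$ is dispersed (resp.\ non-arithmetic) exactly when $F$ is, since the two spectra coincide. Applying Theorem \ref{Shaobo} to the Hölder observable $F$ on $\Xi$ yields that $\mathcal{B}(F,P,\Xi)$, hence $\mathcal{B}(f,\phi^{t})$, is dense in $\mathbb{R}$. One could instead re-run the proof of Theorem \ref{Shaobo} directly on $\Lambda$: non-arithmeticity of $f$ together with the approximation of invariant measures by periodic orbits produces closed orbits $\mathcal{O}_{1},\mathcal{O}_{2}$ with $\oint_{\mathcal{O}_{1}}f>0>\oint_{\mathcal{O}_{2}}f$ and irrational ratio $\oint_{\mathcal{O}_{1}}f\,/\,\oint_{\mathcal{O}_{2}}f$; the shadowing/specification property of $\phi^{t}|_{\Lambda}$ then produces closed orbits whose $f$-integral equals $m\oint_{\mathcal{O}_{1}}f+n\oint_{\mathcal{O}_{2}}f$ up to a uniformly bounded error, and Weyl equidistribution makes the resulting set dense.

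For the Anosov case I would use the classical dichotomy of Plante: a topologically transitive Anosov flow is either topologically mixing, or topologically conjugate to the constant-roof suspension of a transitive Anosov diffeomorphism $\varphi$ of a closed manifold. In the suspension case $\mathcal{B}(f,\phi^{t})=\mathcal{B}(\hat f,\varphi)$ with $\hat f=\int_{0}^{c} f\circ\phi^{s}\,ds$, which is dispersed exactly when $f$ is, and the density result of \cite{Shaobo} for dispersed observables on transitive Anosov diffeomorphisms applies with no arithmeticity assumption. In the mixing case the roof $r$ is not cohomologous to a function valued in $c\mathbb{Z}$ for any $c$ (equivalently, the length spectrum is non-lattice), and the remaining step is to convert this, together with dispersedness of $f$, into non-arithmeticity of $F$ on $\Xi$; then the reduction above finishes the argument.

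I expect the main obstacle to be precisely this last conversion in the mixing case. The only obstruction to a dispersed Hölder observable being automatically non-arithmetic is topological: a nonzero class in the image of $H^{1}(\Lambda;\mathbb{Z})$ in $H^{1}(\Lambda;\mathbb{R})$, pulled back along the flow, yields a dispersed but arithmetic smooth observable whose integral over each closed orbit is the pairing with the orbit's homology class. Thus the step that mixing plus dispersedness forces non-arithmeticity is exactly where the additional topological restrictions anticipated in the introduction must be invoked. The other, more routine, point is to check that the Markov-section machinery applies to a basic set of a merely $C^{1}$ flow, and that $(\Xi,P)$ with roof $r$ and induced observable $F$ lies within the scope of the symbolic version of Theorem \ref{Shaobo}.
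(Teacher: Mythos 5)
Your reduction for basic sets---passing to a Poincar\'e first-return map $\mathcal{P}$ on a finite union of disjoint cross-sections, forming the integrated observable, noting that the discrete and continuous spectra coincide, and invoking Theorem~\ref{Shaobo}---is exactly the paper's approach (Lemmas~\ref{Good Cross Section 1} and~\ref{poincare} and the first half of the proof of Theorem~\ref{MainT2}).

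Where you diverge is the Anosov case. You reach for Plante's dichotomy (topologically mixing vs.\ constant-roof suspension) and Weyl equidistribution, and you honestly flag that in the mixing branch you do not know how to convert ``dispersed + mixing'' into non-arithmeticity of the induced observable on the symbolic model. The paper does something much lighter: given a transitive Anosov flow and two periodic orbits $\theta,\omega$ with $\oint_{\mathcal{O}(\theta)}f<0<\oint_{\mathcal{O}(\omega)}f$, it invokes Lemma~\ref{Good Cross Section} to construct a \emph{proper} basic set $\Lambda(\theta,\omega)\subsetneq N$ containing both orbits (via the transverse homoclinic intersections $W^{cs}(\theta)\pitchfork W^{u}(\omega)$ etc., as in Katok--Hasselblatt Theorem~6.5.5), and then applies the basic-set case of the theorem to $\Lambda(\theta,\omega)$. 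This sidesteps the Plante dichotomy entirely, so the specific obstruction you identified in the mixing case never arises along the paper's route. That said, the paper's written argument for dropping non-arithmeticity in the flow-Anosov case is itself compressed (it parallels the discrete observation in the proof of Theorem~\ref{Shaobo} that for a transitive Anosov diffeomorphism an arithmetic observable is cohomologous to a constant and hence concentrated), and restricting to $\Lambda(\theta,\omega)$ does not manifestly preserve non-arithmeticity; so your worry about the passage from dispersed to non-arithmetic is not unfounded, it is just attached to the wrong reduction. If you replace the Plante/Weyl machinery with Lemma~\ref{Good Cross Section} and the sub-basic-set argument, your proposal aligns with the paper and the rest of your sketch goes through.
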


When $f$ is not dispersed, results analogous to Theorems \ref{mainT}, \ref{mainT2}, and \ref{Rigidity}, are obtained in the setting of Anosov flows.\\

\ \\
An important class of Anosov flows arises in geometry: the geodesic flow of an \emph{Anosov manifold} \((M,g)\)$-$ a compact Riemannian manifold whose geodesic flow is Anosov. This includes all manifolds of negative sectional curvature, but also certain manifolds with mixed curvature \cite{Anosov, CR2, Eberlein, IR2, IR1}. 
A Riemannian metric \( g \) on \( M \) is called an \emph{Anosov metric} if its geodesic flow \( \phi^t_M:SM\to SM \) is Anosov; in this case, the pair \( (M,g) \) is referred to as an \emph{Anosov manifold}. Recall that $SM$ is the unitary tangent bundle.\\
In contrast with Theorems \ref{mainT} and \ref{mainT2} for the flow case, the geodesic flow exhibits a fundamentally different behavior. As shown in Section \ref{geodesic flow} (see Remark \ref{No bounded}), for the specific choice $f=\text{Ric}$ (the Ricci curvature), the associated   \(\mathcal{B}(\mathrm{Ric},\phi^t)\) is unbounded, and $0$ is not an accumulation point of its spectrum.

\noindent Since the geodesic flow preserves the Liouville measure, it is transitive (see \cite{Pa}). Therefore, as a direct application of Theorem~\ref{MainT2}, we obtain:
\begin{coro}\label{Anosov geodesic flow}
Let \((M,g)\) be an Anosov manifold with geodesic flow \(\phi^t_M\). For every dispersed and non-arithmetic H\"older function \(f\),
\[
\overline{\mathcal{B}(f,\phi^t_M)} = \mathbb{R}.
\]
\end{coro}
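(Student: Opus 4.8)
The plan is to exhibit the whole unit tangent bundle $SM$ as a transitive basic set for $\phi^t_M$ and then quote Theorem \ref{MainT2} with $\Lambda = U = N = SM$. First I would recall that, by the very definition of an Anosov manifold, the geodesic flow $\phi^t_M \colon SM \to SM$ is Anosov on the compact manifold $SM$; hence $SM$ is a compact, locally maximal hyperbolic set for $\phi^t_M$ (it is trivially its own isolating neighborhood, since the flow is everywhere defined on $SM$), and the flow has no fixed points. The only remaining ingredient needed for $SM$ to qualify as a \emph{basic set} is topological transitivity. This is standard: $\phi^t_M$ preserves the Liouville measure, a smooth invariant probability measure $\mu$ of full support on $SM$, so by Poincar\'e recurrence $\mu$-almost every point is recurrent, and combining this with the local product structure of an Anosov flow one produces a dense orbit. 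This is precisely the transitivity recorded in the excerpt with reference \cite{Pa}. Thus $SM$ is a transitive basic set for $\phi^t_M$.

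Next, given a dispersed, non-arithmetic, H\"older continuous $f \colon SM \to \mathbb{R}$, I would simply apply Theorem \ref{MainT2} to the flow $\phi^t_M$, the basic set $\Lambda = SM$, and the observable $f$; the regularity and domain hypotheses are automatic here, as $SM$ is its own neighborhood. The theorem then gives that $\mathcal{B}(f,\phi^t_M)$ is dense in $\mathbb{R}$, that is, $\overline{\mathcal{B}(f,\phi^t_M)} = \mathbb{R}$, which is the assertion. I would also note in passing that, since $\phi^t_M$ is in fact a \emph{transitive Anosov} flow, the ``in particular'' clause of Theorem \ref{MainT2} makes the non-arithmeticity hypothesis redundant in this setting; we retain it only to keep the statement parallel with the diffeomorphism results and with the general form of Theorem \ref{MainT2}.

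There is no substantive obstacle here -- this is genuinely a corollary -- and the only point requiring a little care is conceptual bookkeeping: one must check that the notion of basic set used in Theorem \ref{MainT2} permits $\Lambda$ to be the entire phase space of a transitive Anosov flow, and that it is exactly the invariance of the full-support Liouville measure which upgrades the hyperbolic set $SM$ to a \emph{transitive} basic set. Once that identification is made, the density of $\mathcal{B}(f,\phi^t_M)$ in $\mathbb{R}$ follows immediately from Theorem \ref{MainT2}.
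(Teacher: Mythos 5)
Your proposal is correct and follows essentially the same path the paper takes: transitivity of $\phi^t_M$ via preservation of the full-support Liouville measure, followed by an application of Theorem~\ref{MainT2}. One small remark: when you assert that ``the only remaining ingredient needed for $SM$ to qualify as a basic set is topological transitivity,'' you pass over the density of periodic orbits required by the paper's definition of a basic set; this does hold for a transitive Anosov flow (Anosov closing lemma plus Poincar\'e recurrence), but deserves a word. Also note that, strictly speaking, the paper does not need to realize all of $SM$ as a single basic set -- the proof of Theorem~\ref{MainT2} for transitive Anosov flows builds a smaller basic set $\Lambda(\theta,\omega)$ containing the two periodic orbits witnessing dispersion (Lemma~\ref{Good Cross Section}) and applies the basic-set case there. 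Your route of treating $SM$ itself as the basic set is a legitimate alternative and avoids that detour, but both are minor variations on the same underlying reduction.
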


The Birkhoff spectrum also provides geometric rigidity when applied to curvature observables. Specifically, when the observable is the Ricci curvature \(\mathrm{Ric}\), we obtain some characterizations (see Section \ref{geodesic flow}). \\

\noindent If we restrict to functions on the manifold itself, we define
\[
\mathcal{B}_M(f) := \Big\{\oint_{\gamma_\theta} f : \gamma_\theta \text{ is a closed geodesic in } M\Big\}.
\]

For this spectrum, we obtain a generalization of a result of Dairbekov and Sharafutdinov \cite{Dairbekov} in two point of view:

\begin{teo}\label{integral_geodesic}
Let \((M,g)\) be an Anosov manifold and \(f \in C^\infty(M)\). 
\begin{itemize}
    \item[\text{\emph{(a)}}] If \(\mathcal{B}_M(f)\) is bounded, then \(f\) vanishes identically.
    \item[\text{\emph{(b)}}]  If $f$ is non-arithmetic and \(\mathcal{B}(f,\varphi^t) \subset a_1\mathbb{Z} \cup \dots \cup a_l\mathbb{Z}\) for some \(a_1,\dots,a_l \in \mathbb{R}\). Then \(f\) is  constant \(a_{i_0}\) for some \(i_0\).
\end{itemize}
\end{teo}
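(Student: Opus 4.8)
The plan is to transfer each assertion about $\mathcal{B}_M(f)$ to the flow spectrum $\mathcal{B}(f\circ\pi,\phi^t_M)$ on the unit tangent bundle $SM$, where $\pi\colon SM\to M$ is the footpoint projection, and then invoke the already-established flow versions of Theorems \ref{mainT} and \ref{Rigidity} (the ``Anosov flow'' analogues announced in the paragraph ``When $f$ is not dispersed\dots''). The key observation is that for a closed geodesic $\gamma_\theta$ with corresponding closed orbit $\mathcal{O}_\theta$ of $\phi^t_M$ one has $\oint_{\gamma_\theta} f = \oint_{\mathcal{O}_\theta} (f\circ\pi)$, so $\mathcal{B}_M(f)=\mathcal{B}(f\circ\pi,\phi^t_M)$. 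Since $f\in C^\infty(M)$, the lift $f\circ\pi$ is smooth, hence H\"older, on $SM$, and since the geodesic flow of an Anosov manifold is transitive (Liouville-preserving, cf.\ \cite{Pa}), the basic set is all of $SM$ and the non-arithmeticity hypotheses are only needed in the general (non-transitive) phrasing.

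For part (a): boundedness of $\mathcal{B}_M(f)=\mathcal{B}(f\circ\pi,\phi^t_M)$ together with the flow analogue of Theorem \ref{mainT} forces $f\circ\pi$ to be cohomologous to zero for $\phi^t_M$; equivalently there is a H\"older $u$ on $SM$ with $\int_0^{\tau_\theta} (f\circ\pi)(\mathcal{O}_\theta(s))\,ds=0$ for every closed orbit. The final step$-$and the only genuinely geometric one$-$is to upgrade ``$f\circ\pi$ has zero integral over every closed geodesic'' to ``$f\equiv 0$''. This is precisely the content of the Dairbekov--Sharafutdinov result \cite{Dairbekov}: on an Anosov manifold, a smooth function on $M$ whose integral vanishes along all closed geodesics must vanish identically (their argument uses the Livšic-type regularity of $u$ together with an integration-by-parts/Pestov-identity argument on $SM$, exploiting that $u$ arises from a function pulled back from the base). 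I would cite their theorem directly here rather than reprove it; the novelty of part (a) over \cite{Dairbekov} is that we only assume boundedness of the spectrum rather than its vanishing, and this weakening is absorbed entirely by Theorem \ref{mainT}'s dichotomy (bounded $\Rightarrow$ singleton $\{0\}$, using that $0\in\mathcal{B}_M(f)$ is automatic only after we know $f\circ\pi$ is a coboundary$-$alternatively one notes the spectrum of a smooth base function is symmetric under time reversal, hence its only possible bounded value is $0$).

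For part (b): the hypothesis $\mathcal{B}(f,\varphi^t)=\mathcal{B}_M(f)\subset a_1\mathbb{Z}\cup\cdots\cup a_l\mathbb{Z}$ together with the flow analogue of Theorem \ref{Rigidity} (applied to the non-arithmetic observable $f\circ\pi$ on the basic set $SM$) shows $f\circ\pi$ is cohomologous on $SM$ to a constant $a_{i_0}$; that is, $f\circ\pi - a_{i_0}$ is a coboundary, hence has vanishing integral over every closed geodesic. Applying part (a) to the smooth base function $f-a_{i_0}$ yields $f\equiv a_{i_0}$. The main obstacle throughout is ensuring the cohomological information, which Livšic/Theorem \ref{Rigidity} deliver only as statements about a H\"older transfer function $u$ on $SM$, can be fed into the Dairbekov--Sharafutdinov machinery; this is legitimate because that machinery is stated precisely for integrals of base functions over closed geodesics, which is exactly the data we control. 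A secondary point to check is that non-arithmeticity of $f$ on $M$ is inherited by $f\circ\pi$ on $SM$ (immediate, since the two spectra coincide) so that Theorem \ref{Rigidity} applies verbatim.
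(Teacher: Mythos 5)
Your proposal matches the paper's proof essentially exactly: both parts reduce to the flow spectrum via the identity $\mathcal{B}_M(f)=\mathcal{B}(f\circ\pi,\phi^t_M)$, then apply Corollary~\ref{C3-GS} for (a) and the flow version of Theorem~\ref{Rigidity} for (b), finishing each with the Dairbekov--Sharafutdinov theorem (Theorem~\ref{Rig in M}). The only flaw is a tangential aside: the closed-geodesic integral of a base function is \emph{invariant} rather than negated under time reversal, so $\mathcal{B}_M(f)$ need not be symmetric about $0$---compare Remark~\ref{No bounded}, where $\mathcal{B}(\mathrm{Ric},\phi^t)$ lies entirely in a negative half-line---but your main argument never actually uses this aside, since Corollary~\ref{C3-GS} already yields bounded $\Rightarrow \mathcal{B}=\{0\}$ directly.
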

While condition (b) is analogous to that of Theorem \ref{Rigidity}, it leads to a strictly stronger result: here the function must be constant, rather than just cohomologous to a constant.\\
This result reveals a strong rigidity property in Anosov geometry: 
boundedness of the closed geodesic spectrum forces the observable to vanish completely. 
Not only does it generalize the theorem of Dairbekov and Sharafutdinov (see \cite[Theorem 1.1]{Dairbekov}) from the 
special case \(\mathcal{B}_M(f)=\{0\}\) to arbitrary bounded spectra, but it also 
illustrates a fundamental principle in the study of Anosov systems: any constraint 
on the distribution of closed geodesic integrals imposes drastic restrictions on 
the underlying function.

The theorem provides a powerful tool for studying geometric inverse problems on 
Anosov manifolds---such as spectral rigidity, lens rigidity, and the marked length 
spectrum problem---where the behavior of integrals along closed geodesics plays a 
central role. It shows that boundedness of \(\mathcal{B}_M(f)\) is as rigid as the 
condition of being identically zero on all closed geodesics, emphasizing the 
profound interplay between the dynamics of the geodesic flow and the geometry of 
the manifold.

In essence, this sharpens our understanding of how the geodesic flow controls 
smooth functions: if one observes that the set of integrals of a smooth function 
over closed geodesics is merely bounded, one must conclude the function vanishes 
everywhere. This is a striking geometric uniqueness statement with clear 
dynamical significance.

\subsection*{Organization of the paper}
The paper is organized as follows. Section~\ref{sec:prelim} recalls necessary background on hyperbolic dynamics, periodic measures, and the Liv\v sic Theorem. Section~\ref{Section for Diffeos} contains the proofs of our main theorems for diffeomorphisms, including an application to the average Birkhoff spectrum (Theorem~\ref{main2}). Section~\ref{Section for Flows} extends these results to flows. After reducing the problem via a Poincar'e map in Section~\ref{reduction via Poincare Maps}, we prove Theorems~\ref{MainT2} and~\ref{integral_geodesic}. We conclude with applications to geodesic flows in Section~\ref{geodesic flow}.

\section{Preliminaries}\label{sec:prelim}

\subsection{Hyperbolic sets and basic sets}
Let \(M\) be a smooth Riemannian manifold and \(\varphi: M \to M\) a \(C^1\) diffeomorphism. A compact \(\varphi\)-invariant set \(\Lambda \subset M\) is called \emph{hyperbolic} if there exists a continuous \(D\varphi\)-invariant splitting \(T_\Lambda M = E^s \oplus E^u\) and constants \(C > 0\), \(0 < \lambda < 1\) such that for all \(x \in \Lambda\) and \(n \geq 0\),
\[
\|D\varphi^n(v)\| \leq C\lambda^n\|v\|\ \text{for } v \in E^s_x, \quad
\|D\varphi^{-n}(v)\| \leq C\lambda^n\|v\|\ \text{for } v \in E^u_x.
\]
A hyperbolic set \(\Lambda\) is said to be a \emph{basic set} if
\begin{itemize}
    \item \(\varphi|_\Lambda\) is transitive (there exists a dense orbit),
    \item periodic points are dense in \(\Lambda\),
    \item \(\Lambda\) is locally maximal: there exists an open neighbourhood \(V\) of \(\Lambda\) such that \(\Lambda = \bigcap_{n \in \mathbb{Z}} \varphi^n(\overline{V})\).
\end{itemize}
A diffeomorphism \(\varphi\) satisfies \emph{Axiom A} if its non-wandering set \(\Omega(\varphi)\) is hyperbolic and the periodic points are dense in \(\Omega(\varphi)\). Smale's Spectral Decomposition Theorem states that for an Axiom A diffeomorphism, \(\Omega(\varphi)\) splits into finitely many pairwise disjoint basic sets:
\[
\Omega(\varphi) = \Omega_1 \cup \cdots \cup \Omega_k.
\]
Using this decomposition, we have the following definition: 
\begin{defi}
    A continuous real-valued function $f$ on $\Omega$ is called \emph{dispersed} if there exists an index $i_0 \in {1, \dots, k}$ such that:
$\mathcal{B}(f,\varphi, \Omega_{{i_0}})\cap \mathbb{R}^{\pm}\neq \emptyset$. Otherwise, the function $f$ is said to be \emph{concentrated}. \\
Moreover, a continuous observable \(f\) is called \emph{arithmetic} if for all index $i \in {1, \dots, k}$ there is $c_i\in\re$ with 
$$\mathcal{B}(f,\varphi, \Omega_{{i}}) \subset c_i\mathbb{Z}.$$
\end{defi} 

The density in the theorems of the last section will be achieved only in the dispersed and non-arithmetic function of H\"older. 

All these definitions and properties apply to flows.

\subsection{Some Hyperbolic Properties} 
\subsubsection{Local Product Structure}
The stable bundle \( E^s \) and unstable bundle \( E^u \) are uniquely integrable, that is,  for each \( x \in \Lambda \), there exists a unique \( C^1 \) immersed submanifold \( W^s(x) \) (resp. \( W^u(x) \)) such that \( T_y W^s(x) = E^s(y) \) for all \( y \in W^s(x) \) (and similarly for \( W^u(x) \)). Locally,  for \( x \in \Lambda \) and sufficiently small \( \delta > 0 \): (sometimes denotes by $W^{s}_{loc}(x)$)
\[
W^s_\delta(x) = \left\{ y \in M : d(f^n(y), f^n(x)) \leq \delta \text{ for all } n \geq 0 \right\}.
\]
In words: the set of points whose forward orbits remain within \( \delta \)$-$distance of the orbit of \( x \). This is a 
\( C^1 \) embedded disk tangent to \( E^s(x) \) at \( x \), satisfying:
\begin{itemize}
    \item \textbf{Forward invariance:} \( f(W^s_\delta(x)) \subset W^s_\delta(f(x)) \)
    \item \textbf{Exponential contraction:} \( d(f^n(y), f^n(x)) \leq C\lambda^n d(y,x) \) for some \( C>0 \), \( \lambda \in (0,1) \)
\end{itemize}
The global stable manifold is obtained by 
\[
W^s(x) = \bigcup_{n \geq 0} f^{-n}(W^s_\delta(f^n(x))) = \left\{ y \in M : \lim_{n\to\infty} d(f^n(y), f^n(x)) = 0 \right\}.
\]
In words: all points whose forward orbits asymptotically converge to the orbit of \( x \). Similar definitions hold for the unstable case.

\begin{defi}[Bracket Operation]
Let $\Lambda$ be a hyperbolic basic set for a diffeomorphism $f: M \to M$, and let $\epsilon > 0$ be sufficiently small such that the local stable and unstable manifolds $W^s_{loc}(x)$ and $W^u_{loc}(x)$ are well-defined for all $x \in \Lambda$.\\
The \emph{bracket operation} is defined on pairs of sufficiently close points in $\Lambda$:
\[
[\cdot,\cdot]: \{(x,y) \in \Lambda \times \Lambda \mid d(x,y) < \delta\} \to M
\]
where $\delta > 0$ is chosen small enough to ensure unique intersections. The operation is given by:
\[
[x,y] := W^u_{loc}(x) \pitchfork W^{s}_{loc}(y)
\]
That is, $[x,y]$ is the unique intersection point of the local unstable manifold of $x$ and the local stable manifold of $y$.
\end{defi}

\begin{remark}\label{Homoclinic}
    For a basic set $\Lambda$, any pair of $p,q\in \text{Per}(\varphi)\cap \Lambda$ are homoclinic related, that is,  
    $$W^u(p) \pitchfork W^{s}(q) \,\,\text{and}\,\, W^u(q) \pitchfork W^{s}(p).$$
\end{remark}

\begin{lem}\label{Bracket Lemma}

For every $\varepsilon > 0$, there exists $\delta > 0$ such that if $d(x,y) < \delta$ for $x,y \in \Lambda$, then:
\[
\max\left\{d(x, [x,y]), d(x, [y,x]), d(y, [x,y]), d(y, [y,x])\right\} < \varepsilon
\]
\end{lem}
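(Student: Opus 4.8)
The plan is to combine the continuity of the bracket map with the compactness of $\Lambda$, reducing everything to the triangle inequality. First I would recall that, by the local product structure, there is $\delta_0 > 0$ for which $[\cdot,\cdot]$ is well-defined and continuous on $D_{\delta_0} := \{(x,y) \in \Lambda \times \Lambda : d(x,y) < \delta_0\}$; this continuity comes from the continuous dependence of the local manifolds $W^s_{loc}(\cdot)$ and $W^u_{loc}(\cdot)$ on their base point in the $C^1$ topology, together with the uniform transversality of $E^s$ and $E^u$ along $\Lambda$ (an implicit function theorem applied uniformly over the compact set). On the diagonal one has $[x,x] = x$, since $x \in W^u_{loc}(x) \cap W^s_{loc}(x)$ and the intersection is unique.

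Next I would fix $\varepsilon > 0$ and note that the diagonal $\Delta_\Lambda = \{(x,x) : x \in \Lambda\}$ is a compact subset of $D_{\delta_0}$. By uniform continuity of $[\cdot,\cdot]$ on a compact neighbourhood of $\Delta_\Lambda$ inside $D_{\delta_0}$, there is $\delta_1 \in (0,\delta_0)$ such that $d(x,y) < \delta_1$ implies $d\big([x,y],[x,x]\big) < \varepsilon/2$, i.e.\ $d\big(x,[x,y]\big) < \varepsilon/2$; swapping the roles of $x$ and $y$ (and using $[y,y]=y$) gives likewise $d\big(y,[y,x]\big) < \varepsilon/2$. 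The two remaining quantities are then controlled by the triangle inequality: with $\delta := \min\{\delta_1,\varepsilon/2\}$ and $d(x,y) < \delta$,
\[
d\big(y,[x,y]\big) \le d(y,x) + d\big(x,[x,y]\big) < \delta + \tfrac{\varepsilon}{2} \le \varepsilon, \qquad d\big(x,[y,x]\big) \le d(x,y) + d\big(y,[y,x]\big) < \varepsilon,
\]
and taking the maximum of the four terms finishes the argument.

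The only real obstacle is the continuity — more precisely, uniform continuity near the diagonal — of the bracket map; the rest is a soft compactness-plus-triangle-inequality argument. To keep this self-contained rather than quoting it, I would work in exponential charts: $\exp_x^{-1}\big(W^u_{loc}(x)\big)$ is the graph of a $C^1$ map $E^u_x \to E^s_x$ with uniformly bounded $C^1$ norm, and similarly $\exp_y^{-1}\big(W^s_{loc}(y)\big)$ is a graph over $E^s_y$. The uniform lower bound on the angle between $E^s$ and $E^u$ over the compact set $\Lambda$ then forces the intersection point to depend Lipschitz-continuously on the data, yielding in fact the quantitative estimate $d\big(x,[x,y]\big) + d\big(y,[x,y]\big) \le C\, d(x,y)$ with a constant $C$ depending only on $\Lambda$; one may then simply take $\delta = \varepsilon/C$.
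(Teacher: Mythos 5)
Your argument is correct and is essentially the paper's own proof: both rest on the continuity of the bracket map near the diagonal together with the identity $[x,x]=x$. You are a bit more careful in passing from pointwise to uniform continuity via compactness of $\Lambda$ (which the paper leaves implicit), and the optional exponential-chart Lipschitz estimate is a valid alternative way to secure the same continuity, but the core mechanism is the same.
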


\begin{proof}
    
\end{proof}

\begin{proof}
We will show that $d(x, [x,y]) < \varepsilon$; the other three bounds follow by identical reasoning.
The bracket operation is defined as: \[[x,y] = W^u_\varepsilon(x) \cap W^s_\varepsilon(y)\]
where $W^u_\varepsilon(x)$ and $W^s_\varepsilon(y)$ are the local unstable and stable manifolds at $x$ and $y$ respectively.

The key observations are:
\begin{itemize}
    \item The local stable and unstable manifolds vary continuously with the base point.
    \item The unique intersection $W^u_\varepsilon(x) \cap W^s_\varepsilon(x) = \{x\}$.
    \item Since the stable and unstable manifolds always intersect transversely (at a non-zero angle), a small perturbation from $W^s_\varepsilon(x)$ to $W^s_\varepsilon(y)$ causes the intersection point to move only slightly along $W^u_\varepsilon(x)$.
\end{itemize}
More formally: the map $(x,y) \mapsto [x,y]$ is continuous (this follows from the continuous dependence of invariant manifolds and transverse intersection theory). Since $[x,x] = x$, by the definition of continuity, there exists $\delta > 0$ such that if $d(x,y) < \delta$, then $d(x, [x,y]) < \varepsilon$.

The same argument applies to $[y,x] = W^u_\varepsilon(y) \cap W^s_\varepsilon(x)$, giving the other three bounds by symmetry.
\end{proof}
Observe that for any fixed $\theta \in (0,1)$, the function $t \mapsto t^\theta$ is continuous at $0$ with $0^\theta = 0$. Then we have 
\begin{coro}\label{Coro- Bracket}
    Given any $0 < \theta < 1$, for every $\varepsilon > 0$ there exists $\delta > 0$ such that if $d(x,y) < \delta$ for $x,y \in \Lambda$, then:
\[
\max\left\{d(x, [x,y])^\theta,\ d(x, [y,x])^\theta,\ d(y, [x,y])^\theta,\ d(y, [y,x])^\theta\right\} < \varepsilon
.\]
\end{coro}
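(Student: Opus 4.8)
The plan is to derive Corollary~\ref{Coro- Bracket} directly from Lemma~\ref{Bracket Lemma} by composing the distance estimates with the modulus $t \mapsto t^\theta$. First I would fix $\theta \in (0,1)$ and $\varepsilon > 0$, and set $\varepsilon' := \varepsilon^{1/\theta} > 0$, so that $(\varepsilon')^\theta = \varepsilon$. Since $\theta > 0$, the map $t \mapsto t^\theta$ is strictly increasing on $[0,\infty)$ and continuous at $0$ with $0^\theta = 0$ (exactly the observation recorded just before the statement); hence any nonnegative real $s < \varepsilon'$ satisfies $s^\theta < (\varepsilon')^\theta = \varepsilon$.

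Next I would apply Lemma~\ref{Bracket Lemma} with input $\varepsilon'$ in place of $\varepsilon$, obtaining $\delta > 0$ such that for all $x,y \in \Lambda$ with $d(x,y) < \delta$ one has
\[
\max\left\{d(x,[x,y]),\, d(x,[y,x]),\, d(y,[x,y]),\, d(y,[y,x])\right\} < \varepsilon'.
\]
Raising each of the four distances to the power $\theta$ and invoking the monotonicity observation from the previous step, each corresponding $\theta$-power is $< \varepsilon$; taking the maximum over the four quantities yields precisely the asserted inequality for this $\delta$.

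There is no genuine obstacle here: the corollary is a formal reformulation of the Bracket Lemma adapted to a H\"older modulus of continuity, and no new dynamical input beyond the continuity of the bracket operation (already used in Lemma~\ref{Bracket Lemma}) is required. The only point worth stating explicitly is the choice $\varepsilon' = \varepsilon^{1/\theta}$ together with the monotonicity of $t\mapsto t^\theta$; the exponent $\theta\in(0,1)$ plays no role beyond producing the H\"older-type estimate that will be convenient later when bounding Birkhoff sums $\sum_i d(\varphi^i(\cdot),\varphi^i(\cdot))^\theta$ of H\"older observables along shadowing orbits.
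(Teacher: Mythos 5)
Your proof is correct and matches the paper's (implicit) argument exactly: the paper simply records the continuity of $t\mapsto t^\theta$ at $0$ and then asserts the corollary, while you make the composition explicit via the substitution $\varepsilon' = \varepsilon^{1/\theta}$ and the monotonicity of $t\mapsto t^\theta$ on $[0,\infty)$. Nothing is missing.
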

\begin{remark}
    Stable and unstable manifolds also exist for flows. In this setting, one considers 
the \emph{center-stable} and \emph{center-unstable} manifolds, defined respectively by
\[
W^{cs}(x) = \bigcup_{t \in \mathbb{R}} W^s(\phi^t(x))
\quad \text{and} \quad
W^{cu}(x) = \bigcup_{t \in \mathbb{R}} W^u(\phi^t(x)).
\]
These are invariant families that contain the full flow lines through the local stable 
and unstable leaves.
\end{remark}
\subsubsection{Shadowing Property } 
Given \(\delta > 0\), a sequence \((x_n)_{n \in \mathbb{Z}} \subset M\) is called a \emph{\(\delta\)-pseudo-orbit} for \(\varphi\) if
\[
d(\varphi(x_n), x_{n+1}) \leq \delta \quad \text{for all } n \in \mathbb{Z}.
\]
If there exists \(p > 0\) such that \(x_{n+p} = x_n\) for all \(n\), the pseudo-orbit is called \emph{periodic} (or a \emph{periodic \(\delta\)-pseudo-orbit}).\\
We say that a point \(y \in M\) \emph{\(\varepsilon\)-shadows} the pseudo-orbit \((x_n)\) if
\[
d(\varphi^n(y), x_n) \leq \varepsilon \quad \text{for all } n \in \mathbb{Z}.
\]
\begin{lem}\emph{\cite[Theorem 1]{Sakai}}
    For a hyperbolic set \(\Lambda\), there exist \(\delta_0 > 0\) and $\mu\geq 1$ such that every \(\delta\)-pseudo-orbit in \(\Lambda\) with \(\delta \leq \delta_0\) is \(\mu\delta \)-shadowed by a true orbit in \(\Lambda\). Moreover, if the pseudo orbit is periodic, then the shadow is a periodic orbit. 
\end{lem}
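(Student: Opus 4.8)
\section*{Proof proposal for the shadowing lemma}

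The plan is to recast shadowing as a fixed-point equation in a Banach space of bounded sequences of tangent vectors and solve it by the contraction mapping theorem; the periodic statement will then drop out of uniqueness.

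First I would fix $\rho>0$ small enough that $\exp_x$ is a diffeomorphism from the $\rho$-ball of $T_xM$ onto a neighbourhood of $x$ for every $x\in\Lambda$, and, given a $\delta$-pseudo-orbit $(x_n)_{n\in\mathbb{Z}}\subset\Lambda$ with $\delta\le\rho$, look for the shadowing orbit in the form $y_n=\exp_{x_n}(v_n)$, $v_n\in T_{x_n}M$, $|v_n|<\rho$. Since $\Lambda$ is $\varphi$-invariant we have $\varphi(x_n)\in\Lambda$, and the orbit relation $\varphi(y_n)=y_{n+1}$ becomes $v_{n+1}=G_n(v_n)$ with $G_n:=\exp_{x_{n+1}}^{-1}\circ\varphi\circ\exp_{x_n}$. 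Writing $G_n(v)=\Delta_n+A_nv+R_n(v)$ with $\Delta_n=G_n(0)$, $A_n=DG_n(0)$ and $R_n$ the remainder (so $R_n(0)=0$, $DR_n(0)=0$), one has $|\Delta_n|=d(\varphi(x_n),x_{n+1})\le\delta$, while $A_n$ equals $D\varphi(x_n)$ followed by a chart transition $T_{\varphi(x_n)}M\to T_{x_{n+1}}M$ that is $C^0$-close to parallel transport (hence to an isometry) when $\delta$ is small; moreover the Lipschitz constant of $R_n$ on the $\rho$-ball can be made uniformly small by shrinking $\rho$, using uniform continuity of $D\varphi$ near the compact set $\Lambda$.

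Next I would set up the linear theory. Since each $x_n$ and each $\varphi(x_n)$ lies in $\Lambda$, where $E^s\oplus E^u$ is continuous, $D\varphi$-invariant and uniformly hyperbolic, and since the chart transitions displace this splitting only by $O(\delta)$, the non-autonomous cocycle $(A_n)_{n\in\mathbb{Z}}$ admits a uniform exponential dichotomy for $\delta$ small; hence the bounded operator $\mathcal{L}$ on the space of bounded sequences $v=(v_n)_{n\in\mathbb{Z}}$, $v_n\in T_{x_n}M$, normed by $\sup_n|v_n|$, defined by $(\mathcal{L}v)_n=v_n-A_{n-1}v_{n-1}$, is invertible with $\|\mathcal{L}^{-1}\|\le K$, for a constant $K=K(\Lambda,\varphi)$ independent of the pseudo-orbit. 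The orbit equation now reads $\mathcal{L}v=\Delta+R(v)$, i.e.\ $v=\Phi(v):=\mathcal{L}^{-1}(\Delta+R(v))$; shrinking $\rho$ so that $R$ is $\tfrac{1}{2K}$-Lipschitz on the $\rho$-ball and then fixing $\delta_0$ with $2K\delta_0\le\rho$, one checks that $\Phi$ maps the closed ball of radius $2K\delta$ into itself and contracts it by the factor $\tfrac12$. The unique fixed point $v$ gives $y_n=\exp_{x_n}(v_n)$ with $d(y_n,x_n)=|v_n|\le 2K\delta=:\mu\delta$; that the $y_n$ lie in $\Lambda$ follows from local maximality, as every $y_n$ stays in a fixed neighbourhood $V$ with $\Lambda=\bigcap_k\varphi^k(\overline V)$ (valid for basic sets). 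Finally, if $\mu\delta$ is below half the expansivity constant of $\Lambda$, the orbit inside the $\mu\delta$-tube is unique; when $(x_n)$ has period $p$ the point $\varphi^p(y_0)$ also $\mu\delta$-shadows $(x_n)=(x_{n+p})$, so $\varphi^p(y_0)=y_0$ and the shadow is periodic.

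The main obstacle is the uniform exponential dichotomy of $(A_n)$ along an arbitrary $\delta$-pseudo-orbit: one must check that passing from a genuine orbit to a pseudo-orbit, and transporting the hyperbolic splitting through exponential charts, perturbs neither invariance nor the contraction/expansion rates enough to destroy hyperbolicity, with all constants uniform in the pseudo-orbit. I would handle this by first passing to an adapted (Lyapunov) metric in which $D\varphi$ strictly contracts $E^s$ and strictly expands $E^u$, and then running a standard cone-field perturbation argument. A coordinate-free alternative uses the local product structure already introduced: shadow the forward half $(x_n)_{n\ge0}$ by a point $y^+$ whose forward orbit stays $O(\delta)$-close, obtained as a nested intersection of pulled-back local stable disks, symmetrically shadow the backward half by $y^-$ using local unstable disks, and set $y:=[y^-,y^+]$; Lemma~\ref{Bracket Lemma} together with the exponential contraction along $W^s$ and expansion along $W^u$ then gives the two-sided bound, with the periodic case handled as above.
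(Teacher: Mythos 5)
The paper does not actually prove this lemma: it is quoted directly from Sakai's Theorem~1, whose argument is topological (Sakai works with L-hyperbolic homeomorphisms of a compact metric space, and the Lipschitz bound $\mu\delta$ is extracted from the local product structure and the bracket, not from charts). Your reconstruction is the standard differential-geometric route in the style of Anosov/Bowen: exponential charts, the affine model $v_{n+1}=\Delta_n+A_nv_n+R_n(v_n)$, invertibility of the difference operator $\mathcal L$ on bounded sequences with $\|\mathcal L^{-1}\|\le K$ uniformly, and contraction mapping yielding $\mu=2K$. This is correct in outline; you also correctly isolate the genuine technical content --- uniform exponential dichotomy of $(A_n)$ along an arbitrary pseudo-orbit --- and propose the two usual fixes (adapted metric + cone stability, or assembling the shadow via nested local stable/unstable disks and the bracket, which is in fact closer in spirit to Sakai's proof). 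The periodic case via expansive uniqueness is right: if $x_{n+p}=x_n$, then $(\varphi^p(y_n))$ also $\mu\delta$-shadows, so for $\mu\delta$ below half the expansivity constant the two shadows coincide and $\varphi^p(y_0)=y_0$.

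Two caveats are worth flagging. First, the claim that the shadow lies in $\Lambda$ genuinely requires local maximality (a plain hyperbolic set need not contain the shadow; one only gets a shadow in a neighbourhood, or must pass to the locally maximal hull). You note this parenthetically, but the lemma as stated in the paper says ``for a hyperbolic set,'' so strictly it is the basic-set version that is being used and proved. Second, in the product-structure alternative you need the forward shadow $y^+$ and backward shadow $y^-$ to be close enough that $[y^-,y^+]$ is defined; this is controlled by shrinking $\delta_0$ but should be said explicitly, since the bracket is only defined for points within the local-product-structure scale. With those two points made precise, the proposal is a correct and self-contained alternative to the cited topological proof: it buys explicit, quantitative constants from the Banach-space machinery, at the cost of requiring a smooth structure, whereas Sakai's argument applies beyond the smooth category.
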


\subsection{Invariant measures and periodic measures}
Let \(\mathcal{M}(\Lambda,\varphi)\) denote the space of \(\varphi\)-invariant Borel probability measures supported on \(\Lambda\). This is a convex compact metrizable space in the weak-* topology. The ergodic measures form the extremal points of \(\mathcal{M}(\Lambda,\varphi)\).\\
For each periodic point \(p \in \operatorname{Per}(\varphi)\) of period \(\tau_p\), we associate the periodic measure
\[
\mu_p := \frac{1}{\tau_p}\sum_{i=0}^{\tau_p-1} \delta_{\varphi^i(p)}.
\]
A fundamental fact in hyperbolic dynamics is that for a basic set \(\Lambda\), the set \(\{\mu_p : p \in \operatorname{Per}(\varphi) \cap \Lambda\}\) is dense in \(\mathcal{M}(\Lambda,\varphi)\) (see \cite{Sigmund1}).

\subsection{Liv\v{s}ic Theorem and cohomology}
Two continuous functions \(f, g: \Lambda \to \mathbb{R}\) are said to be \emph{cohomologous} (with respect to \(\varphi\)) if there exists a continuous function \(u: \Lambda \to \mathbb{R}\) such that
\[
f - g = u \circ \varphi - u.
\]
For H\"older continuous functions on a hyperbolic set, the celebrated Liv\v{s}ic theorem \cite{Livsic} states that \(f\) is cohomologous to zero if and only if \(S(f,\varphi,p) = 0\) for every periodic point \(p \in \Lambda\). More generally, if \(f\) and \(g\) have the same Birkhoff sums at every periodic point, then they are cohomologous.

This rigidity plays a central role in our analysis of concentrated functions (Theorems~\ref{mainT} and~\ref{Rigidity}).

\subsection{Functions on geodesic flows}
Let \((M,g)\) be a closed Riemannian manifold and \(SM\) its unit tangent bundle. The geodesic flow \(\phi^t: SM \to SM\) is defined by \(\phi^t(x,v) = (\gamma_{x,v}(t), \dot{\gamma}_{x,v}(t))\), where \(\gamma_{x,v}\) is the geodesic with initial conditions \((x,v)\). A closed geodesic of period \(\tau\) corresponds to a periodic orbit of \(\phi^t\).

A Riemannian metric is called \emph{Anosov} if its geodesic flow is an Anosov flow on \(SM\). In this case, \((M,g)\) is called an \emph{Anosov manifold}. Important examples include manifolds of negative sectional curvature, but the class is strictly larger \cite{Eberlein, IR2, IR1}.

For a function \(f: M \to \mathbb{R}\), its integral along a closed geodesic \(\gamma\) of length \(\ell(\gamma)\) is
\[
\oint_\gamma f = \int_0^{\ell(\gamma)} f(\gamma(s))\,ds,
\]
which coincides with the Birkhoff integral for the flow when \(f\) is lifted to \(SM\) as \(f(x,v) := f(x)\).

\section{Proof of Main Results for Diffeomorphisms}\label{Section for Diffeos}

\subsection{Density of the Birkhoff Spectra}\label{New Shaobo}
In this section, we establish conditions that guarantee density of the spectra for basic sets. This result was obtained in \cite{Shaobo}; however, our exposition aims to clarify the origin of the "non-arithmeticity" condition. More precisely,
\begin{teo}[\cite{Shaobo}]\label{Shaobo}
Let \(\Lambda \subset U\) be a basic set for \(\varphi \colon U \to M\) and let \(f \colon U \to \mathbb{R}\) be a H\"older continuous function. If $f$ is dispersed and non-arithmetic, then \(\mathcal{B}(f,\varphi, \Lambda)\) is dense in \(\mathbb{R}\). In particular, for Anosov diffeomorphisms (including transitive ones) the conclusion holds without requiring the non-arithmeticity condition.
\end{teo}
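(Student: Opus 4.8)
The plan is to manufacture new periodic orbits by concatenating, through the Shadowing Lemma, several turns around two given periodic orbits joined by heteroclinic arcs, and then to extract the arithmetic constraints that the resulting Birkhoff sums must obey. \emph{Step 1 (concatenation lemma).} Fix $p\neq q$ in $\operatorname{Per}(\varphi)\cap\Lambda$; by Remark~\ref{Homoclinic} they are homoclinically related, so choose $z\in W^u(p)\cap W^s(q)$ and $w\in W^u(q)\cap W^s(p)$, both lying in $\Lambda$ by local maximality. For small $\delta>0$ form the periodic $\delta$-pseudo-orbit that runs $n$ times around the orbit of $p$, follows a long finite arc of the orbit of $z$ from $\delta$-close to $p$ to $\delta$-close to $q$, runs $m$ times around the orbit of $q$, follows a long finite arc of the orbit of $w$ back $\delta$-close to $p$, and closes up. The Shadowing Lemma provides a genuine periodic orbit $r\subset\Lambda$ shadowing it within a fixed multiple of $\delta$; since the pseudo-orbit is a true orbit segment away from its at most four junctions, hyperbolicity forces $r$ to approach each block geometrically off those junctions, and together with the H\"older continuity of $f$ this yields
\[
\bigl|\,S(f,\varphi,r)-\bigl(\nu\,S(f,\varphi,p)+\mu\,S(f,\varphi,q)+\sigma\bigr)\,\bigr|\le K(\delta),
\]
where $\sigma=\sigma(p,q,\delta)\in\mathbb{R}$, the error obeys $K(\delta)=O(\delta^{\theta})\to0$ as $\delta\to0$ ($\theta$ the H\"older exponent), and the integers $\nu,\mu$ may be prescribed to be \emph{any} two integers exceeding a threshold $N(\delta)$ (the turns concealed inside the long heteroclinic arcs contribute further full multiples of $S(f,\varphi,p)$ and $S(f,\varphi,q)$). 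Thus $\mathcal{B}(f,\varphi,\Lambda)$ meets the $K(\delta)$-neighbourhood of $\{\nu\,S(f,\varphi,p)+\mu\,S(f,\varphi,q)+\sigma:\nu,\mu\ge N(\delta)\}$ for every small $\delta$.

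\emph{Step 2 (the dispersed dichotomy).} Since $f$ is dispersed, fix periodic $p_0,q_0$ with $a:=S(f,\varphi,p_0)>0$ and $b:=S(f,\varphi,q_0)<0$. If $a/b\notin\mathbb{Q}$ for some such pair, then $\{\nu a+\mu b:\nu,\mu\ge N\}$ is dense in $\mathbb{R}$ for every $N$ (an elementary equidistribution fact, using $a>0>b$ and irrationality); applying Step~1 and letting $\delta\to0$ gives $\overline{\mathcal{B}(f,\varphi,\Lambda)}=\mathbb{R}$, whatever the value of $\sigma$. Non-arithmeticity plays no role in this case; and for a transitive Anosov system one always falls into it, since there a dispersed H\"older function cannot be arithmetic — otherwise it would be cohomologous to a constant, necessarily of one sign, contradicting dispersedness — which is why the hypothesis may be dropped. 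If instead $a/b\in\mathbb{Q}$ for every such pair, then comparing an arbitrary element of $\mathcal{B}(f,\varphi,\Lambda)$ with $a$ and with $b$ shows $\mathcal{B}(f,\varphi,\Lambda)\subset\mathbb{Q}\,a$.

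\emph{Step 3 (where non-arithmeticity enters).} In this remaining case, if the elements $(r'/s')a$ of $\mathcal{B}(f,\varphi,\Lambda)$, written in lowest terms, had uniformly bounded denominators, then $\mathcal{B}(f,\varphi,\Lambda)$ would sit inside a single $c\mathbb{Z}$, contradicting non-arithmeticity. Hence there are $x_k=(r_k/s_k)a\in\mathcal{B}(f,\varphi,\Lambda)$ with $\gcd(r_k,s_k)=1$ and $s_k\to\infty$; passing to a subsequence, assume say $x_k<0$ (the case $x_k>0$ is symmetric, pairing with $q_0$). Applying Step~1 to $p_k$ (realising $x_k$) and $p_0$, the model set $\{\nu\,S(f,\varphi,p_k)+\mu\,a:\nu,\mu\ge N\}$ is the full lattice $\gcd(|x_k|,a)\,\mathbb{Z}=(a/s_k)\mathbb{Z}$, whose mesh tends to $0$. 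Therefore, given $\varepsilon>0$, choose $k$ with $a/s_k<\varepsilon$ and then $\delta$ with $K(\delta)<\varepsilon$: Step~1 places every real within $2\varepsilon$ of $\mathcal{B}(f,\varphi,\Lambda)$, and since $\varepsilon$ was arbitrary, $\overline{\mathcal{B}(f,\varphi,\Lambda)}=\mathbb{R}$.

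\emph{Main obstacle.} Everything past Step~1 is elementary arithmetic of sub-semigroups and subgroups of $(\mathbb{R},+)$; the one delicate point is the error bound in Step~1, where the gap between $S(f,\varphi,r)$ and the model value must be made \emph{bounded independently of the numbers $n,m$ of turns} and, simultaneously, \emph{small with $\delta$}. Both hinge on upgrading the bare Shadowing Lemma to geometric convergence of the shadowing orbit toward each genuine block away from the $O(1)$ junctions — exactly where the hyperbolic contraction rates and the bracket estimates of Section~\ref{sec:prelim} come in — so that a convergent geometric series replaces the naively linear (in $n+m$) estimate by an $O(\delta^{\theta})$ one.
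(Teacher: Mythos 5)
Your proposal reconstructs essentially the same strategy as \cite{Shaobo}, which the paper's proof simply cites rather than reproduces: your Step~1 (concatenation of $n$ turns around $p$, a heteroclinic arc, $m$ turns around $q$, and a return arc, glued by the Shadowing Lemma, with the error controlled via geometric convergence toward the periodic blocks) is exactly the content of \cite[Lemma~3.1 and Lemma~3.3]{Shaobo}---and is the same mechanism the paper develops in detail in its proof of Theorem~\ref{Rigidity}, where the four convergent sums $H_1,\dots,H_4$ play the role of your $\sigma$; your Steps~2--3 (irrational-ratio case giving density immediately; rational-ratio case forcing unbounded denominators by non-arithmeticity, whence arbitrarily fine lattices) correspond to the dichotomy built around ``asymptotic rational independence,'' i.e.\ Lemma~\ref{lem:rational} and \cite[Lemma~3.5]{Shaobo}. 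The one point you assert but do not argue is that for a \emph{transitive Anosov diffeomorphism} an arithmetic H\"older observable must be cohomologous to a constant; this is a genuine rigidity fact that fails for general basic sets (the paper's own Section~\ref{example} shows a dispersed, arithmetic, non-coboundary observable on a shift), and the paper handles it exactly as you do---by citing \cite[Proof of Lemma~3.5]{Shaobo}---so you are consistent with the paper's level of detail, but a reader should be warned that this step is not elementary.
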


The proof of this result uses some concepts we present below. The first is the notion of asymptotically rational independence for sequences of real numbers, as introduced in \cite{Shaobo} and \cite{Shaobo1}.

\begin{defi}
A sequence $\{a_n\} \subset \mathbb{R}$ is \emph{asymptotically rationally independent} of $b \in \mathbb{R}$ if there exist a positive sequence $\{\epsilon_n\}$ converging to $0$ and integer sequences $\{k_n\}$, $\{l_n\} \subset \mathbb{Z}$ such that
$$0 < |k_n a_n + l_n b| < \epsilon_n.$$
\end{defi}

The next lemma studies the properties of asymptotically rational independence when $\frac{a_n}{b} \in \mathbb{Q}$ and $b \neq 0$. This lemma can be found in \cite[Lemma 2.8]{Shaobo}, but we provide the proof for completeness.

\begin{lem}\label{lem:rational}
Assume that for a sequence $\{a_n\} \subset \mathbb{R}$ and $b \neq 0$, we have $\frac{a_n}{b} \in \mathbb{Q}$ for all $n$. Write $\frac{a_n}{b} = \frac{l_n}{k_n}$ in lowest terms, i.e., with $l_n, k_n \in \mathbb{Z}$, $\gcd(l_n, k_n) = 1$.
\begin{enumerate}
    \item[\emph{(1)}] If $\{a_n\}$ is asymptotically rationally independent of $b$, then $k_n \to \infty$ as $n \to \infty$, and
    $$\inf\,\{ |k a_n + l b| > 0 : k, l \in \mathbb{Z} \} = \Big|\frac{b}{k_n}\Big|.$$
    \item[\emph{(2)}] If $\{a_n\}$ is \emph{not} asymptotically rationally independent of $b$, then there exists a constant $c > 0$ and integers $s_n, t \in \mathbb{Z}$ such that $a_n = c \cdot s_n$ and $b = c \cdot t$ for all $n \in \mathbb{N}$.
\end{enumerate}
\end{lem}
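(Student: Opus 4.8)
The plan is to reduce both statements to one structural fact: the additive subgroup of \(\mathbb{R}\) generated by \(a_n\) and \(b\) is infinite cyclic with an explicitly computable generator. Normalizing the lowest-terms representation so that \(k_n\ge 1\), we have \(a_n=\frac{l_n}{k_n}b\) with \(\gcd(l_n,k_n)=1\); hence for each fixed \(n\) the set \(G_n:=\{ka_n+lb:\ k,l\in\mathbb{Z}\}\) equals \(\{\frac{kl_n+lk_n}{k_n}\,b:\ k,l\in\mathbb{Z}\}\), and since \(\{kl_n+lk_n:\ k,l\in\mathbb{Z}\}=\gcd(l_n,k_n)\mathbb{Z}=\mathbb{Z}\) by B\'ezout, we conclude \(G_n=\frac{b}{k_n}\mathbb{Z}\). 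In particular \(G_n\) is a discrete subgroup of \(\mathbb{R}\) whose smallest positive element is \(|b|/k_n\), which is precisely the displayed identity asserted in (1) and requires nothing beyond the standing hypothesis \(a_n/b\in\mathbb{Q}\).

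For the remaining claim in (1), suppose \(\{a_n\}\) is asymptotically rationally independent of \(b\) and fix witnesses \(\epsilon_n\to 0^+\) and \(k_n',l_n'\in\mathbb{Z}\) with \(0<|k_n'a_n+l_n'b|<\epsilon_n\). Since \(k_n'a_n+l_n'b\) is a nonzero element of \(G_n=\frac{b}{k_n}\mathbb{Z}\), it has modulus at least \(|b|/k_n\), so \(|b|/k_n<\epsilon_n\to 0\) and therefore \(k_n\to\infty\). The same inequality, read backwards, gives the converse implication as well: if \(k_n\to\infty\), B\'ezout produces \(k_n',l_n'\) with \(k_n'l_n+l_n'k_n=1\), whence \(0<|k_n'a_n+l_n'b|=|b|/k_n\to 0\), so \(\{a_n\}\) is asymptotically rationally independent of \(b\).

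For (2) I would argue by contraposition using this equivalence. If the denominators \(\{k_n\}\) are unbounded, then along a subsequence with \(k_{n_j}\to\infty\) the B\'ezout construction above shows that a subsequence of \(\{a_n\}\) is asymptotically rationally independent of \(b\); hence, if \(\{a_n\}\) is \emph{not} asymptotically rationally independent of \(b\), the sequence \(\{k_n\}\) is bounded, so \(\{k_n:n\in\mathbb{N}\}\) is a finite set. Setting \(t_0:=\operatorname{lcm}\{k_n:n\in\mathbb{N}\}\), \(c:=|b|/t_0>0\), \(t:=\operatorname{sgn}(b)\,t_0\in\mathbb{Z}\) and \(s_n:=\operatorname{sgn}(b)\,l_n t_0/k_n\), the divisibility \(k_n\mid t_0\) makes each \(s_n\) an integer, and one checks directly that \(b=ct\) and \(a_n=\frac{l_n}{k_n}b=cs_n\) for all \(n\), as required. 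The main obstacle is precisely this last part: everything hinges on correctly matching the ``for all \(n\)'' quantifier in the definition of asymptotic rational independence with the stated conclusion, which is why one first passes to the subsequence on which the \(k_n\) stay bounded, exactly in the form in which the lemma is applied later; once the cyclic-group description of \(G_n\) is in hand, all the arithmetic is routine.
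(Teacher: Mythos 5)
Your argument is correct and follows essentially the same path as the paper's: you compute the additive group $G_n=\{ka_n+lb\}=\frac{b}{k_n}\mathbb{Z}$ via B\'ezout (the paper's equation (2.2)), read off item (1) from the discreteness of $G_n$, and for item (2) conclude that $\{k_n\}$ is bounded and then pass to a common multiple to produce $c$; your use of $\operatorname{lcm}\{k_n\}$ versus the paper's $K!$ is an immaterial choice. Your side remark that the displayed infimum identity is actually unconditional (requiring only $a_n/b\in\mathbb{Q}$), and your explicit spelling-out of the converse direction of item (1), are small clarifications beyond the paper's write-up, but the structure of the proof is the same, including the implicit reading of the negation of asymptotic rational independence as forcing $\inf_n|b/k_n|>0$, which the paper also asserts without further comment.
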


\begin{proof}
(1) By definition, $\{a_n\}$ is asymptotically rationally independent of $b$ if and only if
\begin{equation}\label{Eq-Shaobo 1}
\lim_{n \to \infty} \inf\,\{ |k a_n + l b| > 0 : k, l \in \mathbb{Z} \} = 0.
\end{equation}
Since  $\dfrac{a_n}{b}=\dfrac{l_n}{k_n}$, with  $\text{gcd}(l_n,k_n)=1$, then $\{kl_n+lk_n:k,l\in \mathbb{Z}\}=\mathbb{Z}$, and 
 \begin{equation}\label{Eq-Shaobo 2}
     \{ka_n+lb:k,l\in \mathbb{Z}\}=\Big\{\frac{b}{k_n}(kl_n+lk_n):k,l\in \mathbb{Z}\Big\}=\frac{b}{k_n}\mathbb{Z}.
     \end{equation}

 The result of item (1) it follows directly from (\ref{Eq-Shaobo 1}) and (\ref{Eq-Shaobo 2}).\\
\noindent (2) If $\{a_n\}\subset\re$ is not asymptotically rational independence of $b$, then from (\ref{Eq-Shaobo 1}), then $\displaystyle\inf_{n}\Big|\dfrac{b}{k_n}\Big|>0$, and there is $K\in \mathbb{N}$ such that $|k_n|\leq K$. Thus, $\dfrac{K!}{k_n}\in \mathbb{Z}$ and then 
 $$a_n=\frac{b}{K!}\cdot\frac{K!}{k_n}\cdot l_n.$$
 Define $$s_n = \frac{K!}{k_n} \cdot l_n \cdot \operatorname{sign}(b), \, \, t=K!\cdot\operatorname{sign}(b), \, \, \text{and} \,\, c=\Big|\frac{b}{K!}\Big|,$$ 
 where $\operatorname{sign}(b) = b/|b|$).   \\ A straightforward calculation shows that the sequences and constants defined above satisfy the desired properties. This completes the proof of (\ref{Eq-Shaobo 2}).
\end{proof}
To clarify why the concept of asymptotic rational independence is important for the proof of Theorem~\ref{Shaobo}, we will explain the strategy of the proof (compare with \cite{Shaobo}).

\paragraph{Strategy of the proof of Theorem~\ref{Shaobo}}

\begin{enumerate}
    \item[(1)] Assume that there exist periodic points \( p, q \in \operatorname{Per}(\varphi) \) such that \( S(f, \varphi, p) \) and \( S(f, \varphi, q) \) are rationally independent and satisfy
    \[
    S(f, \varphi, p) < 0 < S(f, \varphi, q).
    \]

    \item[(2)] If condition (1) fails for all pairs of periodic points, we show that there exists a sequence \(\{p_n\}\) converging to \(p\) such that \( S(f, \varphi, p_n) \) is asymptotically rationally independent of \( S(f, \varphi, q) \). If no such sequence exists, then Lemma \ref{lem:rational} implies that \(f\) is arithmetic, yielding a contradiction. 

    \item[(3)] Finally, we show that in both cases, the density property required in Theorem~\ref{Shaobo} holds.
\end{enumerate}

\begin{lem}\emph{\cite[Lemma 3.5]{Shaobo}}
Let $\Lambda$ be a basic set for a diffeomorphism $\varphi$, and let $f\colon \Lambda \to \mathbb{R}$ be a continuous function which is  non-arithmetic. If for every $z \in \textrm{Per}(\varphi)$, the sums $S(f, \varphi, z)$ and $S(f, \varphi, q)$ are rationally dependent, then there exists a sequence $p_n \in \textrm{Per}(\varphi)$ such that $S(f, \varphi, p_n)$ is asymptotically rationally independent of $S(f, \varphi, q)$.
\end{lem}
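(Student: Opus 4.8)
The plan is to argue by contrapositive: suppose no such sequence $\{p_n\}$ of periodic points exists with $S(f,\varphi,p_n)$ asymptotically rationally independent of $b:=S(f,\varphi,q)$, and deduce that $f$ is arithmetic, contradicting the hypothesis. First I would dispose of the degenerate case $b=0$: if $S(f,\varphi,q)=0$ then, since by assumption $S(f,\varphi,z)$ and $b$ are rationally dependent for every $z\in\operatorname{Per}(\varphi)$, one must instead run the whole argument with $q$ replaced by some periodic point whose Birkhoff sum is nonzero (such a point exists unless $\mathcal{B}(f,\varphi,\Lambda)=\{0\}$, in which case $f$ is trivially arithmetic). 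So assume $b\neq 0$. Then for each $p\in\operatorname{Per}(\varphi)$ rational dependence means $S(f,\varphi,p)/b\in\mathbb{Q}$, and we write it in lowest terms as $l_p/k_p$ with $\gcd(l_p,k_p)=1$.

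The key step is to use the density of periodic measures in $\mathcal{M}(\Lambda,\varphi)$ together with the shadowing/homoclinic machinery to control the denominators $k_p$. Concretely, I would show that if the denominators $k_p$ are \emph{not} bounded over all $p\in\operatorname{Per}(\varphi)$, then one can extract a sequence $p_n$ with $k_{p_n}\to\infty$, and by Lemma~\ref{lem:rational}(1) applied to the sequence $a_n:=S(f,\varphi,p_n)$ this already gives that $\{a_n\}$ is asymptotically rationally independent of $b$ (the relevant infimum equals $|b/k_{p_n}|\to 0$), provided we can also arrange $p_n\to p$ for some fixed $p$; the latter is free because $\operatorname{Per}(\varphi)$ has only countably many Birkhoff-sum values realized but the periodic points accumulate everywhere in the compact set $\Lambda$, so a subsequence converges. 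That contradicts our standing assumption. Hence the denominators $k_p$ \emph{are} bounded, say $k_p\le K$ for all $p$. Then $K!\,S(f,\varphi,p)/b = (K!/k_p)\,l_p\in\mathbb{Z}$ for every $p$, so setting $c:=|b|/K!$ we get $S(f,\varphi,p)\in c\mathbb{Z}$ for every periodic point $p$, i.e. $\mathcal{B}(f,\varphi,\Lambda)\subset c\mathbb{Z}$, which says exactly that $f$ is arithmetic --- the desired contradiction.

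The main obstacle I anticipate is the step producing a sequence $p_n$ with $k_{p_n}\to\infty$ that simultaneously \emph{converges to a single periodic point} $p$, since the statement of the lemma wants the sequence to converge (this is what makes the output usable in step (2) of the strategy, where one later needs $S(f,\varphi,p_n)$ close to $S(f,\varphi,p)$). To handle this cleanly I would invoke the local product structure and the shadowing lemma: given a periodic point $p$ and a target long periodic orbit $r$ realizing a large denominator, one concatenates pseudo-orbits that follow $p$ for a long time and then make an excursion along (a homoclinic connection to) $r$; by Lemma~\ref{Bracket Lemma} and Corollary~\ref{Coro-Bracket} the H\"older control on $f$ along the stable/unstable legs of the concatenation shows the Birkhoff sum of the shadowing periodic point differs from $S(f,\varphi,p)$ plus (an integer multiple of) the contribution of $r$ by an arbitrarily small error. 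Choosing the number of repetitions of $r$ suitably, this yields periodic points $p_n\to p$ whose Birkhoff sums have arbitrarily small positive "distance" to the lattice generated by $b$, which is precisely asymptotic rational independence. If one prefers to avoid re-deriving this concatenation estimate, it can be quoted from the proof of Theorem~\ref{Shaobo} in \cite{Shaobo}; the only genuinely new content here is the bookkeeping with lowest-terms denominators via Lemma~\ref{lem:rational}.
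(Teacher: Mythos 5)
Your core argument is correct and follows the same route the paper takes: assume no such sequence exists, invoke Lemma~\ref{lem:rational}, and conclude that all Birkhoff sums lie in a lattice $c\mathbb{Z}$, contradicting non-arithmeticity. However, a significant portion of your proposal is devoted to an obstacle that does not exist. The lemma, as stated, only asks for a sequence $p_n\in\operatorname{Per}(\varphi)$ whose Birkhoff sums are asymptotically rationally independent of $b$; it does \emph{not} require $p_n$ to converge to a fixed periodic point. You appear to have imported the phrase ``converging to $p$'' from the informal ``Strategy'' paragraph preceding the lemma, but neither the statement nor the paper's proof involves convergence, so the entire discussion of shadowing, homoclinic concatenation, Lemma~\ref{Bracket Lemma}, and Corollary~\ref{Coro- Bracket} is superfluous for this lemma.

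The paper's proof is substantially shorter: it enumerates all of $\operatorname{Per}(\varphi)=\{p_n\}_{n\ge 1}$ (countable for a basic set), observes that every $a_n:=S(f,\varphi,p_n)$ is a rational multiple of $b:=S(f,\varphi,q)\ne 0$, and applies Lemma~\ref{lem:rational}(2) directly to this full enumeration: if this sequence were not asymptotically rationally independent of $b$, then all $a_n$ would lie in some $c\mathbb{Z}$, forcing $\mathcal{B}(f,\varphi,\Lambda)\subset c\mathbb{Z}$, a contradiction. Your version re-derives Lemma~\ref{lem:rational} inline via the dichotomy ``$k_p$ bounded vs.\ unbounded,'' which is exactly the internal mechanism of that lemma's proof; mathematically equivalent, but it duplicates work the paper has already isolated. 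Your explicit treatment of the degenerate case $b=0$ is a sensible addition the paper omits (it tacitly assumes $b\ne 0$, justified by the context where $q$ is chosen with $S(f,\varphi,q)>0$). Net verdict: the mathematics is sound and the approach is the same, but you should drop the shadowing/concatenation paragraph entirely and apply Lemma~\ref{lem:rational}(2) in one step rather than reproving it.
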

\begin{proof}
 Note that for a basic set, the set $\textrm{Per}(\varphi)$ is countable. We can therefore enumerate it as $\textrm{Per}(\varphi) := \{p_n\}_{n \geq 1}$. The first statement and the hypotheses imply that for each $n$, there exist integers $-l_n, k_n \in \mathbb{Z} \setminus \{0\}$ with $\gcd(l_n, k_n) = 1$ such that
\[
k_n \cdot S(f, \varphi, p_n) - l_n \cdot S(f, \varphi, q) = 0.
\]
Define $a_n := S(f, \varphi, p_n)$ and $b := S(f, \varphi, q) \neq 0$.

Assume, by contradiction, that such a sequence of periodic points does not exist. Then, by Lemma~\ref{lem:rational}, there exists a constant $c > 0$ such that
\[
\left\{S(f, \varphi, p_n) : n \geq 1 \right\} \subset c \cdot \mathbb{Z}.
\]   
This implies that $f$ is arithmetic, which is a contradiction. 
\end{proof}

In the next we state the proof of Theorem \ref{Shaobo}.
\begin{proof}[\emph{\textbf{Proof of Theorem \ref{Shaobo}}}]\label{Lemma New}
From our hypotheses, since $f$ is dispersed, assume there exist $p, q \in \textrm{Per}(\varphi)$ such that 
$$S(f, \varphi, p)<0<S(f, \varphi, q).$$
Consider now two cases:\\
\indent \textbf{Case 1:} $S(f, \varphi, p)$ and $S(f, \varphi, q)$ are rationally independent. The result then follows directly from \cite[Lemma 3.1 and Remark 3.2]{Shaobo}, as $\Lambda$ is a basic set.\\

\textbf{Case 2:} For every $z \in \textrm{Per}(\varphi)$, the values $S(f, \varphi, z)$ and $S(f, \varphi, q)$ are rationally dependent. In this case, applying Lemma \ref{Lemma New} together with \cite[Lemma 3.3 and Remark 3.4]{Shaobo} completes the proof of the theorem.
Finally, in the case of a transitive Anosov diffeomorphism, if $f$ is arithmetic, then $f$ is cohomologous to a constant and hence concentrated. This yields a contradiction. Therefore, for Anosov systems, the dispersed condition alone suffices to guarantee density (see \cite[Proof of Lemma 3.5]{Shaobo}).
\end{proof}

We immediately obtain consequences for Axiom A diffeomorphisms. 

\begin{coro}\label{Coro1-Shaobo}
Let \(\varphi\) be a \(C^1\) Axiom A diffeomorphism and \(f\) a dispersed and non-arithmetic H\"older continuous function. Then \(\mathcal{B}(f,\varphi)\) is dense in \(\mathbb{R}\). In particular, the result holds for Anosov diffeomorphisms.  In particular, for Anosov diffeomorphisms (including transitive ones) the conclusion holds without requiring the non-arithmeticity condition.
\end{coro}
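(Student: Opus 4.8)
The plan is to reduce the Axiom A case directly to Theorem \ref{Shaobo} via Smale's Spectral Decomposition. Write $\Omega(\varphi) = \Omega_1 \cup \cdots \cup \Omega_k$ as a disjoint union of basic sets. Since every periodic point of $\varphi$ lies in $\Omega(\varphi)$, and hence in exactly one $\Omega_i$, we have the partition
\[
\mathcal{B}(f,\varphi) = \bigcup_{i=1}^k \mathcal{B}(f,\varphi,\Omega_i).
\]
By the definition of \emph{dispersed} given after Smale's theorem in Section \ref{sec:prelim}, there is an index $i_0$ with $\mathcal{B}(f,\varphi,\Omega_{i_0}) \cap \mathbb{R}^+ \neq \emptyset$ and $\mathcal{B}(f,\varphi,\Omega_{i_0}) \cap \mathbb{R}^- \neq \emptyset$; that is, $f|_{\Omega_{i_0}}$ is dispersed as a function on the basic set $\Omega_{i_0}$. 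Moreover, since $f$ is non-arithmetic, for that same index $i_0$ there is no $c \in \mathbb{R}$ with $\mathcal{B}(f,\varphi,\Omega_{i_0}) \subset c\mathbb{Z}$, so $f|_{\Omega_{i_0}}$ is non-arithmetic on $\Omega_{i_0}$ as well.

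With these two properties in hand, I would apply Theorem \ref{Shaobo} to the basic set $\Omega_{i_0}$ and the restriction $f|_{\Omega_{i_0}}$, which gives that $\mathcal{B}(f,\varphi,\Omega_{i_0})$ is dense in $\mathbb{R}$. Since $\mathcal{B}(f,\varphi,\Omega_{i_0}) \subset \mathcal{B}(f,\varphi)$, density is inherited: $\overline{\mathcal{B}(f,\varphi)} = \mathbb{R}$. For the Anosov case, $\Omega(\varphi) = M$ is itself a single basic set (transitivity holding in the transitive case), so Theorem \ref{Shaobo} applies directly; and the final remark in the proof of Theorem \ref{Shaobo} shows that on a transitive Anosov system an arithmetic H\"older function is cohomologous to a constant, hence concentrated, so the non-arithmeticity hypothesis is automatic once $f$ is assumed dispersed. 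This completes the argument.

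The only genuinely delicate point is bookkeeping around the two notions of ``dispersed/non-arithmetic'': the hypothesis of the Corollary refers to the global definitions on $\Omega(\varphi)$ given in Section \ref{sec:prelim}, whereas Theorem \ref{Shaobo} is stated for a function on a single basic set. I expect this to be the main (and essentially only) obstacle, and it is resolved purely by unwinding definitions as above: ``$f$ dispersed on $\Omega(\varphi)$'' is \emph{defined} to mean the existence of an index $i_0$ making $f|_{\Omega_{i_0}}$ dispersed, and ``$f$ non-arithmetic on $\Omega(\varphi)$'' means that \emph{not every} $\Omega_i$ has an arithmetic restriction — in particular one must check that the index witnessing dispersion can be taken to also witness non-arithmeticity, which follows because if $f|_{\Omega_{i_0}}$ were arithmetic with $\mathcal{B}(f,\varphi,\Omega_{i_0}) \subset c\mathbb{Z}$ then, $\mathbb{R}^+$ and $\mathbb{R}^-$ both meeting $c\mathbb{Z}$, one still has a dispersed non-arithmetic situation only if $c = 0$, forcing $\mathcal{B}(f,\varphi,\Omega_{i_0}) = \{0\}$, contradicting dispersion; hence $f|_{\Omega_{i_0}}$ is automatically non-arithmetic and Theorem \ref{Shaobo} applies to it. No hyperbolic machinery beyond Theorem \ref{Shaobo} and the Spectral Decomposition Theorem is needed.
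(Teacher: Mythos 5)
Your overall strategy---Smale's Spectral Decomposition followed by an application of Theorem~\ref{Shaobo} to a single basic set---is exactly what the paper's (one-line) proof intends, and you correctly isolated the one genuinely delicate point: the need for a single index $i_0$ on which $f$ is simultaneously dispersed \emph{and} non-arithmetic, so that Theorem~\ref{Shaobo} can be invoked on $\Omega_{i_0}$.

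However, your resolution of that delicate point is mathematically wrong. You claim that if $\mathcal{B}(f,\varphi,\Omega_{i_0}) \subset c\mathbb{Z}$ while meeting both $\mathbb{R}^+$ and $\mathbb{R}^-$, then necessarily $c=0$. This is false: for any $c \neq 0$ the lattice $c\mathbb{Z}$ contains both positive and negative values (e.g.\ $c$ and $-c$), so dispersion is perfectly compatible with arithmeticity on the same basic set. The paper itself gives a counterexample to your claim in Section~\ref{example}: on the full shift $\{0,1\}^{\mathbb{Z}}$ (a basic set) the Lipschitz function $f(x)=x_0+x_1-1$ satisfies $\mathcal{B}(f,\sigma,\Lambda)\subset\mathbb{Z}$, hence is arithmetic with $c=1$, and yet is dispersed since $S(f,\sigma,e_0)=-1<0<1=S(f,\sigma,e_1)$. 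So dispersion on $\Omega_{i_0}$ does not imply non-arithmeticity on $\Omega_{i_0}$.

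With that bridge removed, a genuine gap remains: the definitions in Section~\ref{sec:prelim} produce a dispersion witness $i_0$ and a non-arithmeticity witness $i_1$ that need not coincide, whereas Theorem~\ref{Shaobo} requires one basic set carrying both properties. If $i_0 \ne i_1$, neither $\Omega_{i_0}$ nor $\Omega_{i_1}$ satisfies the hypotheses of Theorem~\ref{Shaobo}, and the union $\mathcal{B}(f,\varphi) = \bigcup_i \mathcal{B}(f,\varphi,\Omega_i)$ is a finite union of non-dense sets, which could fail to be dense. Closing the argument requires either reading the hypotheses as asserting a common witness index (i.e.\ some $\Omega_{i^*}$ on which $f$ is both dispersed and non-arithmetic), or supplying a separate argument for the mixed case; your proof does neither. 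The portion of your write-up handling the Anosov case (using the remark at the end of the proof of Theorem~\ref{Shaobo} that an arithmetic observable on a transitive Anosov system is cohomologous to a constant and hence concentrated) is fine, since there the non-wandering set is a single basic set and the bookkeeping issue does not arise.
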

\begin{proof}
It is just using the Smale's Spectral Decomposition Theorem and Theorem \ref{Shaobo}.
\end{proof}
\subsection{Example of dispersed and arithmetic Lipchitz Function on basic set}\label{example}
The following example demonstrates that the non-arithmeticity condition is necessary for the density conclusion in Theorem \ref{Shaobo}. Thus, the only way to obtain density solely from the dispersed condition is when $\Lambda$ has additional topological properties
Let $\Lambda = \{0,1\}^{\mathbb{Z}}$ with shift $\sigma$, and equip $\Lambda$ with the metric
\[
d(x,y) = \sum_{k \in \mathbb{Z}} 2^{-|k|} |x_k - y_k|.
\]
Consider the function $f\colon \Lambda \to \re$ define by $f(x)=x_0+x_1-1$, where $x=(x_n)_{n\in\mathbb{Z}}$.\\
The function $f$ has the following properties 
\begin{enumerate}
    \item \textbf{$f$ is Lipschitz continuity (in particular H\"older).}\\
    For any $x, y \in \Lambda$:
\begin{eqnarray*}
    |\varphi(x) - \varphi(y)|& = &|(x_0 + x_1 - 1) - (y_0 + y_1 - 1)| = |(x_0 - y_0) + (x_1 - y_1)|\\
    &\leq & d(x,y)+2d(x,y)=3d(x,y).
\end{eqnarray*}

\item \textbf{Not cohomologous to any constant and dispersed.}\\
Suppose, for contradiction, that $\varphi$ is cohomologous to a constant $c \in \mathbb{R}$.
That is, there exists a continuous function $\eta : \Lambda \to \mathbb{R}$ such that
\[
f(x) = c + \eta(\sigma( x)) - \eta(x) \quad \text{for all } x \in \Lambda.
\]

Consider the two fixed points:
\[
e_0 = (\dots, 0, 0, 0, \dots), \quad e_1 = (\dots, 1, 1, 1, \dots).
\]

Observe that $f(e_0) = -1$, and $f(e_1)=1$. Therefore, $f$ is dispersed.\\
Since $\sigma(e_0)=e_0$ and $\sigma(e_1)=e_1$, then the coboundary condition gives:
\[
-1 = c + \eta(e_0) - \eta(e_0) = c \,\,\, \text{and} \,\,\, 1=c + \eta(e_1) - \eta(e_1) = c
\]
Thus, we obtain $c = -1 = 1$, a contradiction. Therefore $\varphi$ cannot be cohomologous to any constant $c \in \mathbb{R}$.
\item \textbf{$f$ is arithmetic.}\\
Let $x$ be a periodic point with minimal period $p \ge 1$ and repeating block
\[
(a_0, a_1, \dots, a_{p-1}), \quad a_k \in \{0,1\}.
\]
That is, $x_k = a_{k \bmod p}$ for all $k \in \mathbb{Z}$.

The Birkhoff sum over one period is:
\begin{eqnarray*}
    S(f,\sigma, x) &=& \sum_{k=0}^{p-1} f(\sigma^k x)=\sum_{k=0}^{p-1} \big( a_k + a_{k+1 \bmod p} - 1 \big)\\
    &=& 2\sum_{k=0}^{p-1} a_k - p\in \mathbb{Z}, 
\end{eqnarray*}
since $\displaystyle\sum_{k=0}^{p-1} a_{k+1 \bmod p} = \sum_{k=0}^{p-1} a_k.$
\end{enumerate}


\subsection{Some Consequences of Theorem \ref{Shaobo}}\label{measure spectra}
\noindent The next result describes the  \emph{average Birkhoff spectrum},  given by
\[
\mathcal{AB}(f,\varphi)=\Big\{\frac{S(f, \varphi, p)}{\tau_p}: p\in \operatorname{Per}(\varphi)\Big\}.
.\] When the dynamics are restricted to an invariant subset \(\Lambda\), the corresponding set is denoted by \(\mathcal{AB}(f,\varphi, \Lambda)\).

Let
\[
m(f):=\inf_{\mu\in \mathcal{M}_1(\Lambda, \varphi)}\int f\,d\mu, \qquad
M(f):=\sup_{\mu\in \mathcal{M}_1(\Lambda, \varphi)}\int f\,d\mu,
\]
where \(\mathcal{M}_1(\Lambda, \varphi)\) denotes the set of \(\varphi\)-invariant probability measures supported on \(\Lambda\).
\begin{teo}\label{main2}  
Let \(\Lambda\subset U\) be a basic set for \(\varphi \colon U \to M\) and let \(f \colon U \to \mathbb{R}\) be H\"older continuous. Then
\[
\Big\{\int f\,d\mu : \mu\in \mathcal{M}_{1}(\Lambda, \varphi)\Big\}=[m(f), M(f)].
\]
\end{teo}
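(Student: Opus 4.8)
The plan is to prove the two inclusions separately, using that the map $\mu\mapsto\int f\,d\mu$ is affine and continuous on the weak-$*$ compact convex set $\mathcal{M}_1(\Lambda,\varphi)$. The inclusion $\{\int f\,d\mu\}\subset[m(f),M(f)]$ is immediate from the definitions of $m(f)$ and $M(f)$. For the reverse inclusion, first note that $m(f)$ and $M(f)$ are attained, since $\mathcal{M}_1(\Lambda,\varphi)$ is compact and $\mu\mapsto\int f\,d\mu$ is continuous; pick $\mu_0,\mu_1\in\mathcal{M}_1(\Lambda,\varphi)$ with $\int f\,d\mu_0=m(f)$ and $\int f\,d\mu_1=M(f)$. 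Then for $t\in[0,1]$ the convex combination $\mu_t:=(1-t)\mu_0+t\mu_1$ lies in $\mathcal{M}_1(\Lambda,\varphi)$, and by affineness $\int f\,d\mu_t=(1-t)m(f)+tM(f)$, which ranges over all of $[m(f),M(f)]$ as $t$ varies. This already gives equality, and in fact shows the set of averages is an interval for any continuous $f$ on any space of invariant measures — no hyperbolicity is needed.

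Since the statement is placed in a section on consequences of Theorem \ref{Shaobo} and concerns the \emph{average Birkhoff spectrum} $\mathcal{AB}(f,\varphi,\Lambda)$, I expect the intended reading is that one should also identify this interval as the closure of $\mathcal{AB}(f,\varphi,\Lambda)$. For that, I would invoke the density of periodic measures $\{\mu_p:p\in\operatorname{Per}(\varphi)\cap\Lambda\}$ in $\mathcal{M}_1(\Lambda,\varphi)$ (stated in the Preliminaries, citing \cite{Sigmund1}): since $S(f,\varphi,p)/\tau_p=\int f\,d\mu_p$ and $\mu\mapsto\int f\,d\mu$ is weak-$*$ continuous, the set $\mathcal{AB}(f,\varphi,\Lambda)$ is dense in $\{\int f\,d\mu:\mu\in\mathcal{M}_1(\Lambda,\varphi)\}=[m(f),M(f)]$; hence $\overline{\mathcal{AB}(f,\varphi,\Lambda)}=[m(f),M(f)]$.

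The only genuinely delicate point is purely cosmetic: the function $f$ is defined on a neighbourhood $U$ of $\Lambda$, and the integrals are taken against measures supported on $\Lambda$, so one should restrict $f$ to $\Lambda$ at the outset and note that Hölder continuity (indeed mere continuity) on the compact set $\Lambda$ is all that enters; no Livšic-type or non-arithmeticity hypotheses are used here. There is no real obstacle: the argument is the standard convexity/compactness fact about the range of a continuous affine functional on $\mathcal{M}_1(\Lambda,\varphi)$, packaged together with the density of periodic measures to translate between the measure-theoretic interval and the average Birkhoff spectrum.
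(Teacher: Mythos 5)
Your proof is correct, and it takes a genuinely different (and considerably more elementary) route than the paper. You argue directly from the structure of $\mathcal{M}_1(\Lambda,\varphi)$: it is convex and weak-$*$ compact, $\mu\mapsto\int f\,d\mu$ is affine and continuous, so the range is a compact interval with attained endpoints $m(f)$ and $M(f)$. As you observe, this uses no hyperbolicity at all. The paper, by contrast, deliberately avoids this path — it even remarks ``Although this statement can be derived from the connectedness of the space of ergodic measures \cite{Sigmund2}, we give a proof based on techniques from Theorem \ref{Shaobo}.'' The paper's route is: first prove (Lemma~\ref{Lmain2}) that $\mathcal{AB}(f,\varphi,\Lambda)$ is dense in $[m(f),M(f)]$, and this is done by (i) identifying $m(f),M(f)$ as the inf/sup of periodic averages via Sigmund's density theorem, then (ii) supposing a gap $(a,b)$ exists, using Lemma~\ref{Lmain2*} to produce $\beta\in(a,b)$ with $f-\beta$ non-arithmetic, invoking Theorem~\ref{Shaobo} for density of $\mathcal{B}(f-\beta,\varphi,\Lambda)$ in $\mathbb{R}$, and deriving a contradiction from period bounds; finally, Theorem~\ref{main2} is deduced from that density statement by a compactness argument extracting a weak-$*$ limit of periodic measures whose average falls in a prescribed subinterval. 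What the paper's route buys is a demonstration that the spectral-density machinery built around Theorem~\ref{Shaobo} suffices on its own to recover this interval structure — the result is presented as a ``consequence,'' not as a target in itself. What your route buys is brevity and generality: it is the textbook fact about affine continuous functionals on compact convex sets, needing neither Liv\v{s}ic, nor non-arithmeticity, nor even the density of periodic measures for the equality $\{\int f\,d\mu\}=[m(f),M(f)]$ — those enter only for your supplementary observation that $\mathcal{AB}(f,\varphi,\Lambda)$ is dense in this interval, which matches the content of the paper's Lemma~\ref{Lmain2}.

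One small point worth flagging: the paper invokes the \emph{connectedness of the space of ergodic measures} as the ``known'' alternative, but your argument does not need that deeper fact — convexity of the full space $\mathcal{M}_1(\Lambda,\varphi)$ (an elementary observation) is already enough, since you are free to take non-ergodic convex combinations $\mu_t=(1-t)\mu_0+t\mu_1$. So your argument is in fact even more elementary than the alternative the paper has in mind.
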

Although this statement can be derived from the connectedness of the space of ergodic measures \cite{Sigmund2}, we give a proof based on techniques from Theorem \ref{Shaobo}.

The proof of Theorem \ref{main2} will be a consequence of following two Lemmas.

\begin{lem}\label{Lmain2*}
Let $\Lambda$ be a basic set for a diffeomorphism $\varphi$, and let \(f \colon \Lambda \to \mathbb{R}\) be continuous and not cohomologous to constant. For any interval $I\subset \re$, there exists \(\beta \in I\) such that $f-\beta$ is non-arithmetic.
\end{lem}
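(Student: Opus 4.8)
**Proof plan for Lemma \ref{Lmain2*}.**

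\textbf{The strategy.} The plan is to prove the contrapositive-flavored statement directly: if no such $\beta \in I$ existed, then $f - \beta$ would be arithmetic for \emph{every} $\beta$ in the interval $I$, and I want to extract from this a contradiction with the hypothesis that $f$ is not cohomologous to a constant. The key mechanism is Liv\v{s}ic's theorem: $f$ is cohomologous to a constant $c$ iff $S(f,\varphi,p) = c\,\tau_p$ for all $p \in \operatorname{Per}(\varphi)$, and more generally the function $f$ is determined up to coboundary by its Birkhoff sums. Since $f$ is not cohomologous to a constant, there exist two periodic points $p, q$ with $\tfrac{S(f,\varphi,p)}{\tau_p} \neq \tfrac{S(f,\varphi,q)}{\tau_q}$; fixing these, for any candidate constant $\beta$ we have $S(f-\beta,\varphi,p) = S(f,\varphi,p) - \beta\tau_p$ and likewise for $q$, and these two affine functions of $\beta$ have different slopes.

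\textbf{Main steps.} First I would reduce to the case where $I$ is an open interval (shrinking $I$ only strengthens the conclusion). Next, fix $p, q$ as above with distinct averages. For $f - \beta$ to be arithmetic means there is $c_\beta \in \re$ with $\mathcal{B}(f-\beta,\varphi,\Lambda) \subset c_\beta \mathbb{Z}$; in particular $S(f,\varphi,p) - \beta\tau_p$ and $S(f,\varphi,q) - \beta\tau_q$ both lie in $c_\beta\mathbb{Z}$, so their ratio $r(\beta) := \frac{S(f,\varphi,p) - \beta\tau_p}{S(f,\varphi,q) - \beta\tau_q}$ is rational whenever the denominator is nonzero. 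Now $r$ is a non-constant Möbius function of $\beta$ (non-constant precisely because the two averages differ), hence $r$ restricted to the open interval $I$ (minus at most one pole) is a non-constant real-analytic, in fact injective, map; its image therefore contains an interval, and an interval cannot be contained in $\mathbb{Q}$. This gives a value $\beta_0 \in I$ with $r(\beta_0)$ irrational, so no $c_{\beta_0}$ can exist, i.e. $f - \beta_0$ is non-arithmetic. I also need to dispatch the degenerate possibilities: if $S(f,\varphi,q) - \beta\tau_q = 0$ for the relevant $\beta$, one works instead with the pair $(p,q)$ in the opposite role, and if $S(f,\varphi,p)-\beta\tau_p = 0$ simultaneously then both averages equal $\beta$, contradicting their distinctness—so at most finitely many (indeed at most two) values of $\beta$ are excluded, harmless in an open interval.

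\textbf{The main obstacle.} The delicate point is handling the case $c_\beta = 0$, i.e. when "arithmetic" degenerates to "$\mathcal{B}(f-\beta,\varphi,\Lambda) = \{0\}$", which forces $f-\beta$ cohomologous to zero by Liv\v{s}ic, hence $f$ cohomologous to the constant $\beta$ — directly contradicting the hypothesis. So that case is actually the easiest and should be peeled off first. The genuine obstacle is therefore purely the arithmetic/number-theoretic step: ensuring that "for all $\beta \in I$ there is \emph{some} $c_\beta$" cannot be arranged, which is exactly where the Möbius-injectivity argument above does the work — the ratio $r(\beta)$ takes uncountably many values on $I$ but would be forced to be rational for all but finitely many of them. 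I would present this cleanly by noting $r : I \setminus \{\text{pole}\} \to \re$ is continuous and injective, hence its image is an interval of positive length, which meets $\re \setminus \mathbb{Q}$.
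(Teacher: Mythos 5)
Your proposal is correct and follows essentially the same route as the paper: fix two periodic points with distinct Birkhoff averages (possible because $f$ is not cohomologous to a constant), observe that arithmeticity of $f-\beta$ forces the ratio of their two Birkhoff sums for $f-\beta$ to be rational, note that this ratio is a non-constant Möbius (hence injective) function of $\beta$, and conclude that some $\beta$ in the interval makes it irrational. The only cosmetic differences are that the paper phrases the injectivity step as ``the preimage of $\mathbb{Q}$ is countable, so avoid it'' rather than ``the image contains an interval,'' and the paper works with the ratio of averages $R(\beta)$ and then multiplies by $\tau_{p_1}/\tau_{p_2}$ whereas you use the ratio of Birkhoff sums directly; your invocation of Liv\v{s}ic to dispatch $c_\beta=0$ is harmless but unnecessary, since the chosen $\beta$ automatically differs from both averages, making both Birkhoff sums nonzero.
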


\begin{proof}
If $p\in \operatorname{Per}(\varphi)$, set \(A_p:=\dfrac{S(f,\varphi,p)}{\tau_p}\). Since $f$ is not cohomologous to a constant there are \(p_1\neq p_2\) with \(A_{p_1}\neq A_{p_2}\).  
For \(\beta\neq A_{p_2}\) define
\[
R(\beta):=\frac{A_{p_1}-\beta}{A_{p_2}-\beta}.
\]
For each rational \(\frac{m}{n}\in\mathbb{Q}\setminus \{0\}\), the equation \(R(\beta)=\frac{m}{n}\) has at most one solution \(\beta\). Hence
\[
B:=\{\,\beta\in I\mid R(\beta)\in\mathbb{Q}\,\}
\]
is countable. Pick \(\beta\notin B\cup\{A_{p_2}\}\). Then \(R(\beta)\notin\mathbb{Q}\).

\noindent Now, put
\[
u:=\tau_{p_1}(A_{p_1}-\beta)=S(f-\beta,\varphi,p_1),\qquad
v:=\tau_{p_2}(A_{p_2}-\beta)=S(f-\beta,\varphi,p_2).
\]
Their ratio 
\(\ds
\frac{u}{v}= \frac{\tau_{p_1}}{\tau_{p_2}}\,R(\beta)\), is irrational.

Assume, for a contradiction, that $f-\beta$ is arithmetic, then for some \(c\neq0\) the set of all such Birkhoff sums is contained in \(c\mathbb{Z}\). Then \(u=c k_1\) and \(v=c k_2\) for integers \(k_1,k_2\), so \(u/v=k_1/k_2\in\mathbb{Q}\), contradicting the irrationality of \(u/v\).\\
\noindent Thus, no such \(c\) exists, which proves the lemma.
\end{proof}

For the next lemma, we use the following Sigmund's Theorem related to the density of periodic measures.
\begin{teo}[\cite{Sigmund1}]\label{Sigmund}
    If $\Lambda$ is a basic set for a diffeomorphism $\varphi$, then the set \\ $\displaystyle \Big\{\frac{1}{\tau_p}\sum_{0}^{\tau_p-1}\delta_{\varphi^{i}(p)}: z\in \emph{Per}(\varphi)\Big\}$ is dense in $\mathcal{M}(\Lambda, \varphi)$ with the topology $*$-weak.
\end{teo}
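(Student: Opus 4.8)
The plan is to approximate an arbitrary $\mu\in\mathcal{M}(\Lambda,\varphi)$ in the weak-$*$ topology by a periodic measure. The strategy has three ingredients: a soft functional-analytic reduction of $\mu$ to a finite convex combination of ergodic measures, Birkhoff's ergodic theorem to realise each ergodic piece by a long finite orbit segment, and the Shadowing Lemma together with transitivity of $\varphi|_\Lambda$ to splice these segments into a single periodic orbit that stays in $\Lambda$.

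Fix continuous functions $g_1,\dots,g_k\colon\Lambda\to\re$ and $\varepsilon>0$; since $\mathcal{M}(\Lambda,\varphi)$ is metrizable in the weak-$*$ topology, it suffices to produce $p\in\operatorname{Per}(\varphi)\cap\Lambda$ with $\big|\int g_j\,d\mu_p-\int g_j\,d\mu\big|<\varepsilon$ for every $j$. First I would invoke the Krein--Milman theorem: $\mathcal{M}(\Lambda,\varphi)$ is compact and convex and its extreme points are exactly the ergodic measures, so $\mu$ is a weak-$*$ limit of finite convex combinations of ergodic measures; rounding the coefficients, there are ergodic measures $\nu_1,\dots,\nu_N\in\mathcal{M}(\Lambda,\varphi)$ and rationals $\alpha_m=r_m/R>0$ with $\sum_m\alpha_m=1$ and $\big|\sum_m\alpha_m\int g_j\,d\nu_m-\int g_j\,d\mu\big|<\varepsilon/4$ for all $j$. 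By Birkhoff's ergodic theorem each $\nu_m$ admits a generic point $x_m\in\Lambda$, and after fixing a common threshold $T_0$ we may assume $\big|\tfrac1T\sum_{i=0}^{T-1}g_j(\varphi^i x_m)-\int g_j\,d\nu_m\big|<\varepsilon/4$ for all $T\geq T_0$, all $m$ and all $j$.

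Next I would build a periodic pseudo-orbit and shadow it. Let $\delta_0>0$ and $\kappa\geq1$ be the constants of the Shadowing Lemma (there denoted $\mu$), and fix $\delta\leq\delta_0$ small enough that each $g_j$ oscillates by less than $\varepsilon/4$ on every ball of radius $\kappa\delta$. By transitivity of $\varphi|_\Lambda$---equivalently, by topological transitivity, valid because a nontrivial basic set has dense periodic points and hence no isolated point---for each $m$ there is a finite orbit arc in $\Lambda$, of some length $c_m\geq1$ \emph{depending only on the points $x_m$ and on $\delta$}, whose successive $\delta$-neighbourhoods link $\varphi^{\,L_m-1}(x_m)$ to $x_{m+1}$, indices taken mod $N$; the integers $L_m$ will be chosen only now. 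Pick a large integer $L$, set $L_m:=r_m L\ (\geq T_0$ once $L$ is large$)$, and let $(y_i)_{i\in\mathbb{Z}}$ be the periodic sequence that runs through $x_m,\varphi(x_m),\dots,\varphi^{\,L_m-1}(x_m)$ for $m=1,\dots,N$ in turn, with the connecting arcs inserted between consecutive blocks and after the last one. This is a periodic $\delta$-pseudo-orbit contained in $\Lambda$ of period $P=\sum_m L_m+\sum_m c_m=RL+\sum_m c_m$. The Shadowing Lemma (using local maximality of $\Lambda$, which is what makes the shadow lie in $\Lambda$) provides a periodic point $p\in\operatorname{Per}(\varphi)\cap\Lambda$ that $\kappa\delta$-shadows $(y_i)$, and regardless of the exact period of $p$ one has $\int g_j\,d\mu_p=\tfrac1P\sum_{i=0}^{P-1}g_j(\varphi^i p)$.

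It remains to estimate this average. Replacing each $\varphi^i(p)$ by $y_i$ changes $\tfrac1P\sum_{i=0}^{P-1}g_j(\varphi^i p)$ by less than $\varepsilon/4$ by the oscillation bound; the block coming from $x_m$ contributes $\tfrac{L_m}{P}\big(\tfrac1{L_m}\sum_{i=0}^{L_m-1}g_j(\varphi^i x_m)\big)$, which is within $\tfrac{L_m}{P}\cdot\varepsilon/4$ of $\tfrac{L_m}{P}\int g_j\,d\nu_m$ by genericity; the connecting arcs contribute at most $\tfrac{\sum_m c_m}{P}\max_j\|g_j\|_\infty$, which tends to $0$ as $L\to\infty$ because $\sum_m c_m$ is fixed; and $\tfrac{L_m}{P}=\tfrac{r_mL}{RL+\sum_m c_m}\to\alpha_m$. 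Collecting terms, $\big|\int g_j\,d\mu_p-\sum_m\alpha_m\int g_j\,d\nu_m\big|<\varepsilon/2$ once $L$ is large, hence $\big|\int g_j\,d\mu_p-\int g_j\,d\mu\big|<\varepsilon$, and density follows. The main obstacle is the order of the quantifiers: the generic points $x_m$ and the lengths $c_m$ of the connecting arcs must be fixed \emph{before} $L$ is sent to infinity, so that the gluing contributes a length negligible relative to the period; the transitivity-plus-shadowing step can, if preferred, be replaced by Bowen's specification property for basic sets, which performs the same gluing in one stroke.
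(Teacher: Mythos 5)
The paper does not prove this statement at all: it is quoted verbatim from Sigmund \cite{Sigmund1} and used as a black box (e.g.\ in Lemma \ref{Lmain2}), so there is no in-paper argument to compare against. Your proposal reconstructs the classical proof --- ergodic decomposition via Krein--Milman, generic points from Birkhoff's theorem, and a specification/shadowing splice into a single periodic orbit --- which is essentially Sigmund's original strategy, and the skeleton is sound. The one point you should tighten is the construction of the connecting arcs: as written, each arc links $\varphi^{L_m-1}(x_m)$ to $x_{m+1}$, so the arc itself cannot be ``fixed before $L$ is sent to infinity'' (its left endpoint moves with $L$). What can and must be fixed in advance is a uniform bound $N(\delta)$ on connecting times: by transitivity and compactness, for every $\delta>0$ there is $N(\delta)$ such that any two points of $\Lambda$ are joined by a $\delta$-pseudo-orbit of length at most $N(\delta)$. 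Then $\sum_m c_m\le N\cdot N(\delta)$ is bounded (not fixed) independently of $L$, which is all your estimate $\tfrac{\sum_m c_m}{P}\to 0$ and $\tfrac{L_m}{P}\to\alpha_m$ actually require. With that correction --- or by invoking Bowen's specification property outright, as you suggest --- the argument is complete; the remaining steps (the $\kappa\delta$-oscillation bound, the block-by-block comparison with $\int g_j\,d\nu_m$, and the observation that averaging over $P$ iterates equals averaging over the minimal period when $\tau_p\mid P$) are all correct.
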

Using Theorem \ref{Sigmund} and Lemma \ref{Lmain2*}, we have

\begin{lem}\label{Lmain2}
    Let $\Lambda \subset U$ be a basic set for a flow $\varphi\colon U \to M$, and let $f\colon U \to \mathbb{R}$ be a H\"older continuous function. Then, the set $\mathcal{AB}(f, \varphi, \Lambda)$ is dense in the interval $[m(f), M(f)]$.
\end{lem}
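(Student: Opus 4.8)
The plan is to show that $\mathcal{AB}(f,\varphi,\Lambda)$ meets every open subinterval of $[m(f),M(f)]$. First I would dispose of the degenerate case: if $f$ is cohomologous to a constant $c$, then $S(f,\varphi,p)/\tau_p = c$ for every periodic $p$ (by the Liv\v sic cohomology relation, the coboundary $u\circ\varphi-u$ has zero Birkhoff sum on every periodic orbit), and likewise $\int f\,d\mu = c$ for every invariant $\mu$, so $m(f)=M(f)=c$ and both sides reduce to the single point $\{c\}$. So assume henceforth that $f$ is not cohomologous to a constant; in particular $m(f)<M(f)$, since if the integrals of $f$ were all equal then $f$ minus that common value would have vanishing Birkhoff spectrum and hence, by Liv\v sic's theorem, be cohomologous to zero.

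Next, fix an arbitrary nonempty open interval $I=(\alpha,\beta)\subset (m(f),M(f))$; I want to produce a periodic point $p$ with $S(f,\varphi,p)/\tau_p\in I$. By Lemma \ref{Lmain2*} applied to the interval $I$, there is $\gamma\in I$ such that $g:=f-\gamma$ is non-arithmetic. Now I claim $g$ is dispersed: since $\gamma\in(m(f),M(f))$, there are invariant measures $\mu_-,\mu_+$ with $\int g\,d\mu_- = \int f\,d\mu_- - \gamma < 0 < \int g\,d\mu_+$ (using $m(f)<\gamma<M(f)$ and that the extrema are attained), and by Theorem \ref{Sigmund} (Sigmund's density of periodic measures) these integrals are approximated by Birkhoff averages over periodic orbits, so there are periodic points $q_\pm$ with $S(g,\varphi,q_-)<0<S(g,\varphi,q_+)$. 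Thus $g$ is dispersed and non-arithmetic, so Theorem \ref{Shaobo} applies: $\mathcal{B}(g,\varphi,\Lambda)$ is dense in $\mathbb{R}$. In particular $0\in\overline{\mathcal{B}(g,\varphi,\Lambda)}$, so there is a periodic point $p$ with $|S(g,\varphi,p)|$ arbitrarily small.

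The remaining step — and the main obstacle — is passing from "the total sum $S(g,\varphi,p)=S(f,\varphi,p)-\gamma\tau_p$ is close to $0$" to "the \emph{average} $S(f,\varphi,p)/\tau_p$ is close to $\gamma$," because a small total sum over an orbit of large period gives an even smaller average, but there is no a priori lower bound on $\tau_p$ among the orbits selected by Theorem \ref{Shaobo}. I would resolve this by strengthening the selection: among periodic points $p$ with $S(g,\varphi,p)$ in a small neighbourhood of $0$, take one whose period $\tau_p$ is as large as desired. This is legitimate because, by Theorem \ref{Sigmund}, the periodic measure $\mu_p$ can be taken arbitrarily close to a fixed ergodic measure $\nu$ with $\int g\,d\nu$ of a chosen small magnitude and sign; concretely, pick $\nu$ close to a convex combination $t\mu_-+(1-t)\mu_+$ chosen so that $\int g\,d\nu$ lies just inside the side of $0$ we need, take periodic $p$ with $\mu_p$ so close to $\nu$ that $|S(g,\varphi,p)/\tau_p - \int g\,d\nu|$ is tiny, whence $|S(f,\varphi,p)/\tau_p - \gamma| = |S(g,\varphi,p)/\tau_p|$ is within $I$. (Alternatively, one can invoke the shadowing/specification machinery: concatenating a fixed orbit realizing an average near $\gamma$ with long stretches controlled by $\mu_\pm$ and shadowing the pseudo-orbit produces periodic points whose averages sweep a neighbourhood of $\gamma$; the pseudo-orbit lengths are free parameters, so the averages — not just the sums — can be pinned down.) Either route gives a periodic $p$ with $S(f,\varphi,p)/\tau_p\in I$, and since $I\subset(m(f),M(f))$ was arbitrary, together with the trivial containment $\mathcal{AB}(f,\varphi,\Lambda)\subset[m(f),M(f)]$ (every periodic average is an integral against an invariant measure), this proves density. $\qed$
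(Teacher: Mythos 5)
Your conclusion is correct and your opening (reduce to $m(f)<M(f)$, use Lemma~\ref{Lmain2*} to find $\gamma$ in the target interval $I$ making $g=f-\gamma$ non\-/arithmetic, verify $g$ is dispersed, invoke Theorem~\ref{Shaobo} to get $\mathcal{B}(g,\varphi,\Lambda)$ dense in $\mathbb{R}$) matches the paper's strategy. But the ``main obstacle'' you flag is not actually there. For a diffeomorphism the period $\tau_p$ is a positive integer, so $\tau_p\geq 1$, and therefore
\[
\left|\frac{S(g,\varphi,p)}{\tau_p}\right|\leq |S(g,\varphi,p)|.
\]
Once density of $\mathcal{B}(g,\varphi,\Lambda)$ produces a periodic $p$ with $|S(g,\varphi,p)|<\min(\gamma-\alpha,\beta-\gamma)$, the average $S(f,\varphi,p)/\tau_p=\gamma+S(g,\varphi,p)/\tau_p$ already lies in $I$. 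No lower bound on $\tau_p$ is needed; the bound $\tau_p\geq 1$ is automatic and sufficient. Your steps~1--5 therefore already constitute a complete proof with no repair.

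The repair you then insert does give a correct argument, but it silently undercuts your own framework: choosing an invariant measure $\nu$ with $\int g\,d\nu$ near zero and approximating $\nu$ by periodic measures via Theorem~\ref{Sigmund} proves density of the averages directly from Sigmund's theorem alone, with the non\-/arithmeticity of $g$ and the density of $\mathcal{B}(g)$ playing no role in that route. This is precisely the ``connectedness of the space of ergodic measures'' argument the paper explicitly says, just before Theorem~\ref{main2}, it wants to avoid. The paper's own finish stays with Birkhoff sums and closes differently: it assumes for contradiction that no periodic average lies in $(a,b)$, splits $\operatorname{Per}(\varphi)$ into $\Gamma_a$ (average $<a$) and $\Pi_b$ (average $>b$), and notes that density of $\mathcal{B}(g)$ yields infinitely many $p$ with $S(g,\varphi,p)\in(a,b)$; but for such $p$, the inequality $S(f,\varphi,p)/\tau_p<a$ (respectively $>b$) combined with $a<S(f,\varphi,p)-\beta\tau_p<b$ forces $\tau_p$ to be bounded, so only finitely many such $p$ exist, a contradiction. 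So where you switch tools mid\-/proof, the paper keeps the Birkhoff\-/sum route and closes it with a bounded\-/period counting argument.
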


\begin{proof}

We begin by stating the following identities:
\[
m(f) = \inf_{p \in \operatorname{Per}(\varphi)} \frac{S(f, \varphi, p)}{\tau_p} \quad \text{and} \quad M(f) = \sup_{p \in \operatorname{Per}(\varphi)} \frac{S(f, \varphi, p)}{\tau_p},
\]
where \( m(f) = \inf_{\mu \in \mathcal{M}_1(\Lambda, \varphi)} \int f\,d\mu \) and \( M(f) = \sup_{\mu \in \mathcal{M}_1(\Lambda, \varphi)} \int f\,d\mu \).

It is immediate from the definitions that
\[
m(f) \leq \inf_{p \in \operatorname{Per}(\varphi)} \int f\,d\mu_p \quad \text{and} \quad M(f) \geq \sup_{p \in \operatorname{Per}(\varphi)} \int f\,d\mu_p,
\]
where \( \mu_p = \frac{1}{\tau_p} \sum_{i=0}^{\tau_p-1} \delta_{\varphi^i(p)} \).

To prove the reverse inequality for \( m(f) \), let \( \{\mu_i\} \) be a sequence of invariant measures such that \( \int f \,d\mu_i \to m(f) \). By compactness, we may assume that \( \mu_i \to \mu \) weakly for some invariant measure \( \mu \), and it follows that \( \int f \,d\mu = m(f) \).

By Theorem \ref{Sigmund}, for each \( \mu_i \) there exists a sequence of periodic points \( p_n^i \in \operatorname{Per}(\varphi) \) such that \( \mu_{p_n^i} \to \mu_i \) as \( n \to \infty \). A standard diagonal argument allows us to find a subsequence \( p_{n_i}^i \) such that \( \mu_{p_{n_i}^i} \to \mu \). In particular,
\[
\int f \,d\mu_{p_{n_i}^i} \to m(f).
\]
Consequently,
\[
\inf_{p \in \operatorname{Per}(\varphi)} \frac{S(f, \varphi, p)}{\tau_p} \leq \lim_{i \to \infty} \int f \,d\mu_{p_{n_i}^i} = m(f),
\]
which establishes the desired identity for \( m(f) \). The proof for \( M(f) \) is analogous.

Now let us prove the density:
    If the interval $[m(f), M(f)]$ consists of a single point, i.e., $m(f) = M(f)$, the proof is trivial. We may therefore assume $m(f) < M(f)$ (in particular, $f$ is not cohomnologous to constant).

    Let $a, b$ be such that $m(f) < a < b < M(f)$. We will prove that there exists a periodic point $p \in \text{Per}(\varphi)$ for which the average $\frac{S(f, \varphi, p)}{\tau_p}$ lies in $I:=(a, b)$.

    Suppose, by contradiction, that no such periodic point exists. We then define the following two subsets of $\text{Per}(\varphi)$:
    \[
    \Gamma_a = \left\{ p \in \text{Per}(\varphi) : \frac{S(f, \varphi, p)}{\tau_p} < a \right\}
    \quad \text{and} \quad
    \Pi_b = \left\{ p \in \text{Per}(\varphi) : \frac{S(f, \varphi, p)}{\tau_p} > b \right\}.
    \]
    By our assumption, $\text{Per}(\varphi) = \Gamma_a \cup \Pi_b$.

    For every $\beta \in I$ the function $f-\beta$ is dispersed. Moroever, from Lemma \ref{Lmain2*} there is $\beta\in I$ such that $f-\beta$ is non-arithmetic, then by Theorem~\ref{Shaobo}, the set $\mathcal{B}(f - \beta, \varphi, \Lambda)$ is dense in $\mathbb{R}$. Consider the set
    \[
    \Delta_\beta = \left\{ p \in \text{Per}(\varphi) : S(f, \varphi, p) - \beta \tau_p \in (a, b) \right\}.
    \]
    By density, the set $\Delta_\beta$ is infinite due to the density property.

    We now examine the intersection of $\Delta_\beta$ with the sets $\Gamma_a$ and $\Gamma_b$.

    \begin{itemize}
        \item Let $p \in \Gamma_a \cap \Delta_\beta$. Then:
        \[
        a < S(f, \varphi, p) - \beta \tau_p < a \tau_p - \beta \tau_p = (a - \beta)\tau_p.
        \]
        If $a \geq 0$, this is an immediate contradiction because $a - \beta < 0$. If $a < 0$, it follows that $\tau_p < \frac{a}{a - \beta}$, implying the period $\tau_p$ is bounded above. Consequently, the set $\Gamma_a \cap \Delta_\beta$ is finite.

        \item Now, let $p \in \Pi_b \cap \Delta_\beta$. Then:
        \[
        b \tau_p - \beta \tau_p < S(f, \varphi, p) - \beta \tau_p < b.
        \]
        Since $b-\beta>0$, this implies $\tau_p < \frac{b}{b - \beta}$, so again we have a contradiction (if $b<0$)  or the period $\tau_p$ is bounded (if $b>0$). Hence, the set $\Pi_b \cap \Delta_\beta$ is also finite.
    \end{itemize}

    However, we have $\Delta_\beta = (\Gamma_a \cap \Delta_\beta) \cup (\Pi_b \cap \Delta_\beta)$. Since the union on the right-hand side is finite, this contradicts the earlier conclusion that $\Delta_\beta$ is infinite. This contradiction completes the proof.
\end{proof}

 \begin{proof}[\emph{\textbf{Proof of Theorem \ref{main2}}}]
Assume, without loss of generality, that $m(f) < M(f)$, and choose $a,\tilde{a}, b,\tilde{b}$ such that $m(f) < a < \tilde{a}<\tilde{b}<b < M(f)$. By Lemma \ref{Lmain2}, there exists $p_k \in \text{Per}(\varphi)$ such that  
\[
\int f \, d\mu_k = \frac{S(f, \varphi, p_k)}{\tau_{p_k}} \in (\tilde{a}, \tilde{b}),
\]  
where  
$
\displaystyle\mu_k = \frac{1}{\tau_{p_k}} \sum_{i=0}^{\tau_{p_k}-1} \delta_{\varphi^{i}(p_k)}
$.
If necessary, pass to a subsequence so that $\mu_k \to \mu \in \mathcal{M}(\Lambda, \varphi)$. Then  
\[
\int f \, d\mu_k \to \int f \, d\mu \in [\tilde{a}, \tilde{b}]\subset (a,b),
\]  
which completes the proof.
 \end{proof}
\subsection{Proof of Theorem \ref{mainT}}\label{Proof mainT}
Theorem~\ref{Shaobo} and Liv\v{s}ic's Theorem will be the main tools in the proof of Theorem~\ref{mainT}. Furthermore, we will use Theorem~\ref{Shaobo} to demonstrate that the solutions obtained in \cite{ZouWei} for the finite approximation of Liv\v{s}ic's Theorem are trivial.
 \begin{proof}[\emph{\textbf{Proof of Theorem \ref{mainT}}}]
From Liv\v{s}ic's Theorem, it is clear that (ii) is equivalent to (iii). It is also evident that either (ii) or (iii) implies (i). Therefore, our task reduces to proving that (i) implies (ii). To prove this, we will demonstrate the contrapositive: we assume that $f$ is not cohomologous to $0$, and prove that the set $\mathcal{B}(f,\varphi, \Lambda)$ is unbounded.


\noindent \textbf{Case 1:} For all $p,q\in\text{Per}(\varphi)$,
$$\frac{S(f, \varphi, p)}{\tau_p}=\frac{S(f, \varphi, q)}{\tau_q}=\alpha.$$
Since $f$ is not cohomologous to $0$, it follows that $\alpha\neq 0$. Consequently,
$$\mathcal{B}(f,\varphi, \Lambda)=\{\alpha\tau_p: p\in \text{Per}(\varphi)\}.$$
This set is unbounded because the set of periods $\{\tau_p\}$ for periodic orbits in a basic set is itself unbounded.

\noindent \textbf{Case 2:} There exist $p,q\in\text{Per}(\varphi)$ such that
$$\frac{S(f, \varphi, p)}{\tau_p}<\frac{S(f, \varphi, q)}{\tau_q}.$$
Consider the interval $I:=\Big(\frac{S(f, \varphi, p)}{\tau_p}, \frac{S(f, \varphi, q)}{\tau_q} \Big)$. From Lemma \ref{Lmain2*}, we can choose a constant $\beta\in I$  such that the H\"older continuous function $g(x)=f(x)-\beta$ is dispersed and non-arithmetic. Therefore,  
by Theorem \ref{Shaobo}, the set $\mathcal{B}(g,\varphi, \Lambda)$ is dense in $\mathbb{R}$. Therefore, for any $M>0$, there exists a periodic point $p$ such that
$$M < S(g, \varphi, p) = S(f, \varphi, p) - \beta\tau_p \leq M+1.$$
This inequality implies that $S(f, \varphi, p) > M + \beta\tau_p > M$. Hence, $\mathcal{B}(f,\varphi, \Lambda)$ is unbounded, which completes the proof.
\end{proof}
In \cite{ZouWei} the author studied finite approximation of Liv\v sic Theorem, more specifically  
\begin{teo}\emph{\cite[Theorem 1.1]{ZouWei}}\label{thm:ZouWei}
Let \(\varphi\) be a transitive Anosov diffeomorphism of a compact manifold \(M\) and let \(0 < \alpha \leq 1\). There exist constants \(0 < \beta \leq \alpha\), \(C > 0\), and \(\tau > 0\) such that for any \(\varepsilon > 0\) and any \(\ f \in C^{\alpha}(M)\) with \(\|f\|_{C^{\alpha}} \leq 1\) satisfying
\[
\Big|\sum_{i=0}^{n-1} f(\varphi^i(p))\Big| \leq \varepsilon, \quad \forall p = \varphi^n(p) \text{ with } n \leq \varepsilon^{-\frac{1}{2}},
\]
there exist functions \(u \in C^{\beta}(M)\) and \(h \in C^{\beta}(M)\) such that
\[
f(x) = u(\varphi(x)) - u(x) + h(x), \quad \forall x \in M.
\]
Moreover,
\[
\|u\|_{C^{\beta}} \leq C \quad \text{and} \quad \|h\|_{C^{\beta}} \leq C\varepsilon^{\tau}.
\]
\end{teo}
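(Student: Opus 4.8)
The plan is to transfer the problem to a subshift of finite type, prove there that the functional $\mu\mapsto\int f\,d\mu$ is uniformly tiny over invariant measures, and then peel off an honest H\"older coboundary, leaving the asserted small remainder. Concretely, fix a Markov partition of small diameter for the transitive Anosov diffeomorphism $\varphi$, giving a H\"older semiconjugacy $\pi\colon\Sigma\to M$ from a transitive subshift of finite type $(\Sigma,\sigma)$, under which $\hat f:=f\circ\pi$ is H\"older of exponent $\alpha_1\asymp\alpha$ with $\|\hat f\|_{C^{\alpha_1}}\lesssim 1$. Periodic orbits correspond to periodic orbits of the same period (the boundary ambiguity is harmless) and $S(\hat f,\sigma,x)=S(f,\varphi,\pi x)$, so the hypothesis reads $|S(\hat f,\sigma,x)|\le\varepsilon$ for every $\sigma$-periodic $x$ of period $\le\varepsilon^{-1/2}$. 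It suffices to decompose $\hat f$ on $\Sigma$ with H\"older norms controlled: the descent $\Sigma\to M$ (symmetrizing the primitive over the finitely many symbolic preimages of a point, which differ only on the Markov boundary) only degrades exponents by a fixed amount and absorbs one more H\"older coboundary into the remainder.

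The heart is the estimate $\sup_{\mu\in\mathcal M(\sigma)}\big|\int\hat f\,d\mu\big|\lesssim\varepsilon^{1/2}$. Fix $\mu$, put $t=\int\hat f\,d\mu$, and write $S_n\hat f(x)=\sum_{i=0}^{n-1}\hat f(\sigma^ix)$, so $t\in[\alpha(\hat f),\beta(\hat f)]$ with $\beta(\hat f)=\lim_n\tfrac1n\max_xS_n\hat f(x)$, $\alpha(\hat f)=\lim_n\tfrac1n\min_xS_n\hat f(x)$. Two facts: (i) $\max_xS_n\hat f(x)=n\beta(\hat f)+O(1)$ and $\min_xS_n\hat f(x)=n\alpha(\hat f)+O(1)$ uniformly in $n$ (extend a maximizing length-$n$ segment, using mixing, to a nearby periodic orbit, whose average is $\le\beta(\hat f)$); (ii) changing one symbol of a length-$n$ word changes $\tfrac1nS_n\hat f$ by $O(1/n)$, by H\"older decay. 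Interpolating by single-symbol substitutions between a length-$n$ segment with average within $O(1/n)$ of $\beta(\hat f)$ and one with average within $O(1/n)$ of $\alpha(\hat f)$, a discrete intermediate-value argument yields an admissible word with average within $O(1/n)$ of $t$; appending $O(1)$ symbols to close it up (mixing) gives a periodic $p$ with $\tau_p=n+O(1)$ and $\big|\tfrac1{\tau_p}S(\hat f,\sigma,p)-t\big|\lesssim 1/n$. Taking $n=\lfloor\varepsilon^{-1/2}\rfloor$, so $\tau_p\le\varepsilon^{-1/2}$, the hypothesis gives $|S(\hat f,\sigma,p)|\le\varepsilon$, hence $|t|\le\tfrac1{\tau_p}|S(\hat f,\sigma,p)|+O(1/n)\lesssim\varepsilon^{1/2}$.

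Equivalently $\eta:=\sup_p|S(\hat f,\sigma,p)|/\tau_p\lesssim\varepsilon^{1/2}$. Now apply a quantitative Liv\v{s}ic theorem: there is a H\"older $v$ with $\|v\|_{C^{\alpha_1}}\lesssim\|\hat f\|_{C^{\alpha_1}}\lesssim 1$ such that $\hat h:=\hat f-(v\circ\sigma-v)$ has $\|\hat h\|_\infty\lesssim\eta$ (for $\eta=0$ this is the classical theorem, $v$ being the telescoped primitive, and the quantitative version should run the same scaffolding while tracking the no-longer-cancelling periodic sums, or proceed through the transfer operator, the primitive's norm being governed by $\|\hat f\|_{C^{\alpha_1}}$ and the remainder by $\eta$). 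Since also $\|\hat h\|_{C^{\alpha_1}}\lesssim 1$, interpolation on the bounded-diameter space $\Sigma$ gives $\|\hat h\|_{C^{\beta_1}}\lesssim\|\hat h\|_\infty^{\,1-\beta_1/\alpha_1}\lesssim\varepsilon^{\tau_1}$ for any $\beta_1<\alpha_1$. Pushing $v$ and $\hat h$ back through $\pi$, one obtains $f=u\circ\varphi-u+h$ with $\|u\|_{C^\beta}\le C$ and $\|h\|_{C^\beta}\le C\varepsilon^\tau$, the constants $\beta\le\alpha$, $C$, $\tau$ depending only on $\varphi$ and $\alpha$; as $\|f\|_{C^\alpha}\le 1$ the implied constants are absolute.

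The real obstacle is the quantitative Liv\v{s}ic step with the primitive bounded \emph{independently of $\varepsilon$}. Both naive routes fail: the dense-orbit telescoping produces $v$ with a defective modulus of continuity, since a near-return of the reference orbit can close up to a periodic orbit of arbitrarily long period, for which only $|S(\hat f,\sigma,p)|\lesssim\varepsilon^{1/2}\tau_p$ is available; and the ``$N$-block graph'' reduction (truncate to cylinder functions and run a finite-graph Liv\v{s}ic) needs $N\asymp\log(1/\varepsilon)$ for the truncation yet $e^{h_{\mathrm{top}}N}\le\varepsilon^{-1/2}$ for its cycles to be covered by the hypothesis, which is compatible only when the contraction rate dominates the topological entropy after the H\"older loss, and in any case its primitive grows in $C^\beta$. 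The actual argument must therefore exploit the $\varepsilon^{-1/2}$ scale more cleverly—plausibly a graded or transfer-operator construction that keeps $\|v\|_{C^{\alpha_1}}\lesssim 1$ while forcing $\|\hat h\|_\infty\lesssim\eta$—and, secondarily, must genuinely produce the $O(1/n)$ rate of paragraph two (not a slower one, which would push the closing orbit outside the window $\tau_p\le\varepsilon^{-1/2}$). The remaining bookkeeping—ordering the choices of $\beta_1$ then $\beta$ and checking the $\Sigma\to M$ descent preserves H\"older regularity—is routine.
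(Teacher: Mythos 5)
First, a point of comparison: the paper does not prove this statement at all --- it is Theorem 1.1 of \cite{ZouWei}, quoted verbatim and used as a black box (the author's only contribution around it is the observation, via Theorem \ref{mainT} and Corollary \ref{Coro1-Shaobo}, that the resulting $h$ must vanish identically). So there is no internal proof to measure yours against, and your attempt has to stand on its own.

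Judged that way, it contains a genuine gap, which you yourself flag in your closing paragraph: the ``quantitative Liv\v{s}ic theorem'' you invoke --- a H\"older $v$ with $\|v\|_{C^{\alpha_1}}\lesssim 1$ \emph{uniformly in $\varepsilon$} and $\|\hat f-(v\circ\sigma-v)\|_\infty\lesssim\eta$ --- is not an off-the-shelf result in this form, and it is essentially the entire content of the theorem; asserting it is close to assuming what is to be proved. Your own diagnosis of why the two natural constructions fail (telescoping along a dense orbit degrades the modulus of continuity of $v$, since near-returns close up to periodic orbits of unbounded period for which only $|S(\hat f,\sigma,p)|\lesssim\varepsilon^{1/2}\tau_p$ is available; the $N$-block truncation needs $N\asymp\log(1/\varepsilon)$ while the hypothesis only reaches cycles of length $\le\varepsilon^{-1/2}$) is accurate, and nothing in the proposal replaces them. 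A secondary but related defect is that reducing everything to $\eta:=\sup_p|S(\hat f,\sigma,p)|/\tau_p\lesssim\varepsilon^{1/2}$ discards precisely the information a proof of this kind must exploit: the hypothesis bounds the \emph{sums} over all periodic orbits of period at most $\varepsilon^{-1/2}$ by $\varepsilon$, not merely the averages by $\varepsilon^{1/2}$, and in approximate-Liv\v{s}ic arguments it is the smallness of sums over short closing orbits that controls the increments $u(x)-u(y)$ of the primitive at the corresponding spatial scales. Minor further issues: single-symbol substitutions need not preserve admissibility in a subshift of finite type, so your discrete intermediate-value interpolation requires block substitutions repaired by specification; and the claim $\max_x S_n\hat f=n\beta(\hat f)+O(1)$ silently uses the Ma\~n\'e--Conze--Guivarc'h lemma. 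As it stands, the proposal is a plausible symbolic reduction wrapped around an unproven core, not a proof.
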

As a direct application of Theorem~\ref{mainT}, we show that all solutions $h(x)$ are trivial, \emph{i.e.}, $h(x)=0$ for all $x\in M$. 

\noindent Indeed, fix $0<\epsilon\leq 1$ and consider the subset of periodic points
\[
\Gamma_{\epsilon}(\varphi)=\{p\in \operatorname{Per}(\varphi): \tau_{p}\leq \epsilon^{-\frac{1}{2}}\}.
\]
Since every periodic point $p\in\operatorname{Per}(\varphi)$ has period $\tau_p\geq 1$, then there is $0<\epsilon\leq 1$ such that $\tau_p\leq \epsilon^{-\frac{1}{2}}$. Therefore
\[
\operatorname{Per}(\varphi)=\bigcup_{0<\epsilon\leq 1}\Gamma_{\epsilon}(\varphi).
\]
Now, the hypothesis of Theorem~\ref{thm:ZouWei} implies that for each $\epsilon>0$ and all $p\in\Gamma_{\epsilon}(\varphi)$,
\[
\Big|\sum_{i=0}^{\tau_p-1} f(\varphi^i(p))\Big| \leq \epsilon.
\]
We conclude that
\[
\Big|\sum_{i=0}^{\tau_p-1} f(\varphi^i(p))\Big| \leq 1 \quad \text{for all } p \in \operatorname{Per}(\varphi),
\]
which is equivalent to $\mathcal{B}(f,\varphi)\subset [-1,1]$.

If $\mathcal{B}(f,\varphi)$ contained both positive and negative values---that is, if 
$\mathcal{B}(f,\varphi)\cap [-1,0) \neq \emptyset$ and $\mathcal{B}(f,\varphi)\cap (0,1] \neq \emptyset$---then Corollary~\ref{Coro1-Shaobo} would imply that $\mathcal{B}(f,\varphi)$ is dense in $\mathbb{R}$, contradicting its boundedness. 

Therefore, $\mathcal{B}(f,\varphi)$ must lie entirely in $[0,1]$ or in $[-1,0]$. In either case, Theorem~\ref{mainT} applies and shows that $f$ is cohomologous to zero, and hence $h(x)=0$ for all $x\in M$.

\subsection{Proof of Theorem \ref{Rigidity}}
The proof of Theorem~\ref{Rigidity} is rather extensive. To complete it, we need several lemmas from number theory concerning upper density and rational independence.

\subsubsection{Number-theoretic lemmas}

\begin{lem}\label{Lemma4-Rigidity}
Let \(a, b > 0\) with \(a/b \notin \mathbb{Q}\), and let \(c_1, \dots, c_n > 0\). Then there exists \(\beta \in (0,1) \cap \mathbb{Q}\) such that for all \(k = 1, \dots, n\),  
\[
\frac{a\beta + b}{c_k} \notin \mathbb{Q}.
\]
\end{lem}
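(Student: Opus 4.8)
The plan is to exploit the fact that for each fixed $k$, the condition $\frac{a\beta + b}{c_k} \in \mathbb{Q}$ is extremely restrictive on $\beta$. Write this as: if $\frac{a\beta+b}{c_k} = \frac{p}{q}$ for some integers $p,q$ with $q > 0$, then $a\beta + b = \frac{p}{q}c_k$, i.e. $\beta = \frac{\frac{p}{q}c_k - b}{a}$. So for each $k$ the set of ``bad'' rationals $\beta$ is contained in the image of a single injective affine map $\mathbb{Q} \to \mathbb{R}$, hence is a \emph{countable} set. Taking the union over $k = 1, \dots, n$, the set $B := \{\beta \in (0,1)\cap\mathbb{Q} : \frac{a\beta+b}{c_k}\in\mathbb{Q}\text{ for some }k\}$ is still countable. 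Since $(0,1)\cap\mathbb{Q}$ is infinite (indeed countably infinite), I must check that $B$ does not exhaust it — but a priori a countable set could equal a countable set, so a cardinality argument alone is not enough. This is where the irrationality of $a/b$ enters.

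The key observation is that for a \emph{rational} $\beta$, the quantity $\frac{a\beta + b}{c_k}$ being rational is equivalent to $a\beta + b$ being a rational multiple of $c_k$. The crucial point: I want to show that for ``most'' rational $\beta$, $a\beta+b$ is irrational relative to each $c_k$. Actually the cleanest route: fix any $k$ and suppose $\frac{a\beta_1+b}{c_k}\in\mathbb{Q}$ and $\frac{a\beta_2+b}{c_k}\in\mathbb{Q}$ for two distinct rationals $\beta_1 \neq \beta_2$. Subtracting, $\frac{a(\beta_1-\beta_2)}{c_k}\in\mathbb{Q}$, so $\frac{a}{c_k}\in\mathbb{Q}$ (since $\beta_1-\beta_2\in\mathbb{Q}^\times$). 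But then from $\frac{a\beta_1+b}{c_k}\in\mathbb{Q}$ and $\frac{a\beta_1}{c_k}\in\mathbb{Q}$ we get $\frac{b}{c_k}\in\mathbb{Q}$, hence $\frac{a}{b} = \frac{a/c_k}{b/c_k}\in\mathbb{Q}$, contradicting the hypothesis. Therefore for each $k$ there is \emph{at most one} rational $\beta$ with $\frac{a\beta+b}{c_k}\in\mathbb{Q}$. Hence $B$ has at most $n$ elements.

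With $|B| \leq n$ established, the conclusion is immediate: $(0,1)\cap\mathbb{Q}$ is infinite, so there exists $\beta \in \big((0,1)\cap\mathbb{Q)\big)\setminus B$, and this $\beta$ satisfies $\frac{a\beta+b}{c_k}\notin\mathbb{Q}$ for all $k = 1,\dots,n$, as desired. I would write the proof in exactly this order: (i) reduce to showing the bad set for a single $k$ is finite; (ii) the two-points-subtraction argument producing the contradiction with $a/b\notin\mathbb{Q}$, concluding at most one bad $\beta$ per index; (iii) union bound $|B|\le n < \infty$ and pick $\beta$ outside it. The only mild subtlety — and the ``main obstacle,'' though it is minor — is being careful in step (ii) that the two distinct rationals give $\beta_1 - \beta_2$ a nonzero rational, so that division is legitimate; everything else is routine.
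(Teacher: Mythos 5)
Your proof is correct and uses the same core argument as the paper: if two distinct rationals $\beta_1 \neq \beta_2$ both make $\frac{a\beta+b}{c_k}$ rational, then subtracting gives $\frac{a}{c_k}\in\mathbb{Q}$, hence $\frac{b}{c_k}\in\mathbb{Q}$, hence $\frac{a}{b}\in\mathbb{Q}$, a contradiction. You package it more sharply, though: you observe directly that this forces each bad set $B_k$ to have at most one element, so $|B|\le n<\infty$ and any $\beta$ outside $B$ works, whereas the paper proves only that the union $\bigcup_k B_k$ is a \emph{proper} subset of $(0,1)\cap\mathbb{Q}$, via a pigeonhole step (``some $B_{k_0}$ must be infinite'') that your observation renders unnecessary. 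Your version is cleaner and gives a quantitative bound, but the mathematical content is the same.
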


\begin{proof}
Define for each \(k = 1, \dots, n\):
\[
B_k = \left\{ \beta \in (0,1) \cap \mathbb{Q} : \frac{a\beta + b}{c_k} \in \mathbb{Q} \right\}.
\]
Equivalently, \(B_k = \left\{ \beta \in (0,1) \cap \mathbb{Q} : a\beta + b \in \mathbb{Q} \cdot c_k \right\}\).

We will show that \(\bigcup_{k=1}^n B_k \subsetneq (0,1) \cap \mathbb{Q}\).

\medskip

\noindent \textbf{Step 1:} \textit{Each \(B_k\) is a proper subset of \((0,1) \cap \mathbb{Q}\).}

Suppose for contradiction that \(B_k = (0,1) \cap \mathbb{Q}\) for some \(k\).  
Pick \(\beta_1, \beta_2 \in (0,1) \cap \mathbb{Q}\), \(\beta_1 \ne \beta_2\). Then  
\[
a\beta_1 + b = q_1 c_k, \quad a\beta_2 + b = q_2 c_k, \quad q_1, q_2 \in \mathbb{Q}.
\]
Subtracting: \(a(\beta_1 - \beta_2) = (q_1 - q_2)c_k\), so  
\[
c_k = \frac{a(\beta_1 - \beta_2)}{q_1 - q_2} \in \mathbb{Q} \cdot a.
\]
Let \(c_k = t a\) with \(t \in \mathbb{Q}\). Then from \(a\beta_1 + b = q_1 t a\), we get  
\[
b = a(q_1 t - \beta_1) \in \mathbb{Q} \cdot a.
\]
So \(a, b \in \mathbb{Q} \cdot a\) trivially, and \(b = r a\) with \(r \in \mathbb{Q}\), hence \(a/b = 1/r \in \mathbb{Q}\), contradiction.  
Thus \(B_k \subsetneq (0,1) \cap \mathbb{Q}\).

\medskip

\noindent \textbf{Step 2:} \textit{No finite union \(B_1 \cup \dots \cup B_n\) equals \((0,1) \cap \mathbb{Q}\).}

Suppose for contradiction that  
\[
B_1 \cup \dots \cup B_n = (0,1) \cap \mathbb{Q}.
\]
Since \((0,1) \cap \mathbb{Q}\) is infinite, by the pigeonhole principle, some \(B_{k_0}\) contains infinitely many distinct rationals in \((0,1)\).

Pick \(\beta_1, \beta_2 \in B_{k_0}\), \(\beta_1 \ne \beta_2\). Then  
\[
a\beta_1 + b = q_1 c_{k_0}, \quad a\beta_2 + b = q_2 c_{k_0}, \quad q_1, q_2 \in \mathbb{Q}.
\]
Subtracting: \(a(\beta_1 - \beta_2) = (q_1 - q_2)c_{k_0}\), so  
\[
c_{k_0} = \frac{a(\beta_1 - \beta_2)}{q_1 - q_2} \in \mathbb{Q} \cdot a.
\]
Let \(c_{k_0} = t a\), \(t \in \mathbb{Q}\). Then from \(a\beta_1 + b = q_1 t a\),  
\[
b = a(q_1 t - \beta_1).
\]
Thus \(b \in \mathbb{Q} \cdot a\), so \(a/b \in \mathbb{Q}\), contradiction.

\noindent Hence the union \(\bigcup_{k=1}^n B_k\) is a proper subset of \((0,1) \cap \mathbb{Q}\).

\medskip

Choose \(\beta \in (0,1) \cap \mathbb{Q} \setminus \bigcup_{k=1}^n B_k\).  
Then for all \(k\), \(\beta \notin B_k\), so \(a\beta + b \notin \mathbb{Q} \cdot c_k\), hence \(\frac{a\beta + b}{c_k} \notin \mathbb{Q}\).
\end{proof}

\begin{lem}\label{LEMMA3NEW-RIG}
Let $A_1, A_2, \dots, A_m \subseteq \mathbb{N}$ and suppose $\ds \bigcup_{i=1}^m A_i \supseteq \{n \in \mathbb{N} : n \geq n_0\}$ for some $n_0 \in \mathbb{N}$. Then there exists $i_0$ such that
$${d}^*(A_{i_0}) \geq \frac{1}{m},$$
where ${d}^{*}(A) = \displaystyle\limsup\limits_{N \to \infty} \frac{|A \cap \{1,\dots,N\}|}{N}$ is the upper density.
\end{lem}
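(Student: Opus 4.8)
The statement is a standard pigeonhole-type fact about upper density, and the plan is to argue by contradiction. Suppose that $d^*(A_i) < \tfrac{1}{m}$ for every $i = 1, \dots, m$. The key point is that upper density is \emph{subadditive}: for any finite collection of sets $A_1, \dots, A_m \subseteq \mathbb{N}$,
\[
d^*\!\Big(\bigcup_{i=1}^m A_i\Big) \le \sum_{i=1}^m d^*(A_i).
\]
This follows because for each fixed $N$ one has $\big|\big(\bigcup_i A_i\big) \cap \{1,\dots,N\}\big| \le \sum_i |A_i \cap \{1,\dots,N\}|$, and $\limsup$ of a finite sum is at most the sum of the $\limsup$'s. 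Granting this, the assumed strict bounds give $d^*\big(\bigcup_{i=1}^m A_i\big) < m \cdot \tfrac{1}{m} = 1$.

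On the other hand, the hypothesis $\bigcup_{i=1}^m A_i \supseteq \{n \in \mathbb{N} : n \ge n_0\}$ forces the union to be \emph{cofinite}, hence to have upper density $1$: indeed, for $N \ge n_0$,
\[
\Big|\Big(\bigcup_{i=1}^m A_i\Big) \cap \{1,\dots,N\}\Big| \ge N - n_0 + 1,
\]
so the ratio is at least $\tfrac{N - n_0 + 1}{N} \to 1$, whence $d^*\big(\bigcup_i A_i\big) = 1$. This contradicts the strict inequality $d^*\big(\bigcup_i A_i\big) < 1$ obtained above. Therefore at least one index $i_0$ must satisfy $d^*(A_{i_0}) \ge \tfrac{1}{m}$, as claimed.

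I do not expect any serious obstacle here: the only mild subtlety is making sure the subadditivity inequality is stated with $\limsup$ (not $\lim$, which need not exist) and used correctly, but this is routine since $\limsup(a_N + b_N) \le \limsup a_N + \limsup b_N$ for bounded real sequences. An equivalent and perhaps cleaner phrasing avoids the explicit subadditivity lemma: along a subsequence $N_k$ realizing $d^*\big(\bigcup_i A_i\big) = 1$, some fixed index $i_0$ must contribute a proportion at least $\tfrac{1}{m}$ of the union infinitely often (another pigeonhole step), and this gives $d^*(A_{i_0}) \ge \tfrac{1}{m}$ directly; either route works, and I would present whichever is shorter in context.
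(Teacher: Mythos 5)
Your proof is correct and uses essentially the same ingredients as the paper's: the union is cofinite hence has upper density $1$, and subadditivity of upper density (via $\limsup$ of a sum being at most the sum of $\limsup$'s) forces some $A_{i_0}$ to have density at least $1/m$. The only cosmetic difference is that you phrase it as a contradiction while the paper argues directly from $\sum_i d^*(A_i) \geq d^*(B) = 1$; the two are mathematically the same.
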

\begin{proof}   
Let $B = \bigcup_{i=1}^m A_i$. For $N > n_0$,
\[
|B \cap [1,N]| \geq N - n_0 + 1.
\]
so ${d}^*(B) = 1$. 

\noindent Now for each $N$, $\sum_{i=1}^m |A_i \cap [1,N]| \geq |B \cap [1,N]|$, consequently 
\[
\sum_{i=1}^m {d}^{*}(A_i) \geq {d}^*(B) = 1.
\]
Therefore, there must exist $i_0$ such that 
$
{d}^{*}(A_{i_0}) \geq \frac{1}{m}.
$
\end{proof}

\begin{lem}\label{LEMMA2NEW-RIG}
Let \(\alpha \in \mathbb{R}\setminus\mathbb{Q}\) be irrational, $\theta\in \re$  and let \(A \subset \mathbb{N}\) with $d^*(A)=\gamma >0$.
Let \(S_A:=\{\{\alpha n+\theta \}: n \in A\}\subset[0,1]\), where \(\{x\}=x-\lfloor x \rfloor\) denotes the fractional part of $x$.
Then there exists \(x\in S_A\) such that
\[
\frac{\gamma}{4}\le x\le 1-\frac{\gamma}{4}.
\]
\end{lem}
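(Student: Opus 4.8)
The plan is to argue by contradiction, using Weyl's equidistribution theorem as the only nontrivial input. Suppose no such $x$ exists. Then every element of $S_A$ lies in the complement
\[
J:=[0,\gamma/4)\cup(1-\gamma/4,1]
\]
of the interval $[\gamma/4,\,1-\gamma/4]$ inside $[0,1]$. Since $\gamma=d^{*}(A)\le 1$, we have $\gamma/4\le 1/4<3/4\le 1-\gamma/4$, so $J$ is a genuine disjoint union of two (half-open) intervals with total Lebesgue measure $|J|=\gamma/4+\gamma/4=\gamma/2$. Also $A$ is infinite because $d^{*}(A)=\gamma>0$, so $S_A\neq\emptyset$. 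By the very definition of $S_A$, the inclusion $S_A\subset J$ is equivalent to $A\subset B$, where
\[
B:=\{n\in\N:\{\alpha n+\theta\}\in J\}.
\]

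Next I would invoke equidistribution. Because $\alpha$ is irrational, the sequence $(\alpha n+\theta)_{n\ge 1}$ is equidistributed modulo $1$, so for the fixed finite union of intervals $J$ (whose boundary is a null set) one has
\[
\lim_{N\to\infty}\frac{|B\cap\{1,\dots,N\}|}{N}=|J|=\frac{\gamma}{2}.
\]
In particular the natural density of $B$ exists and equals $\gamma/2$, whence $d^{*}(B)=\gamma/2$.

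Finally I would use monotonicity of upper density under inclusion: from $A\subset B$ we get $|A\cap\{1,\dots,N\}|\le|B\cap\{1,\dots,N\}|$ for every $N$, hence $d^{*}(A)\le d^{*}(B)=\gamma/2$. This contradicts $d^{*}(A)=\gamma>0$, and the contradiction proves the lemma. There is no serious obstacle here: the only points needing care are checking that $J$ is a genuine disjoint union of measure exactly $\gamma/2$ (which uses $\gamma\le 1$) and that $S_A$ is nonempty (which uses $\gamma>0$); everything else is a direct application of Weyl's theorem together with the elementary inequality $d^{*}(A)\le d^{*}(B)$ for $A\subset B$.
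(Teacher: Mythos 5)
Your proposal is correct and takes essentially the same route as the paper: argue by contradiction, note that $S_A$ would be contained in the two end-intervals of total length $\gamma/2$, apply Weyl equidistribution of $(\{\alpha n+\theta\})$ to compute the density of the corresponding set of indices, and then contradict $d^*(A)=\gamma$ via monotonicity of upper density. Your write-up is in fact a bit more careful than the paper's (you explicitly verify $\gamma\le 1$ so the two end-intervals are disjoint, that $S_A\ne\emptyset$, and you phrase the density step cleanly via the auxiliary set $B$), whereas the paper contains a small typo writing $\varepsilon$ in place of $\gamma/4$ in the penultimate display; but the underlying argument is identical.
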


\begin{proof}
Suppose, for contradiction, that every \(x\in S_A\) satisfies \(x<\gamma/4\) or \(x>1-\gamma/4\). 
Then for all \(n\in A\),
\[
\{\alpha n+\theta \}\in[0,\gamma/4)\cup(1-\gamma/4,1).
\]
\noindent Since \(\alpha\) is irrational, the sequence \((\{\alpha n+\theta\})_{n=1}^\infty\) is uniformly distributed mod~1. Hence
\[
\lim_{N\to\infty}\frac{|\{n\le N:\{\alpha n+\theta\}\in[0,\gamma/4)\cup(1-\gamma/4,1)\}|}{N}=2\gamma/4=\gamma/2.
\]
For each \(N\) we have
\[
A\cap\{1,\dots,N\}\subset\{n\le N:\{\alpha n+\theta\}\in[0,\gamma/4)\cup(1-\gamma/4,1)\},
\]
so
\[
\frac{|A\cap\{1,\dots,N\}|}{N}\le\frac{|\{n\le N:\{\alpha n+\theta\}\in[0,\varepsilon)\cup(1-\varepsilon,1)\}|}{N}.
\]
Therefore,  \(d^*(A)\le 2\gamma/4=\gamma/2< \gamma = d^*(A)\) a contradiction.\\
Thus there exists \(n_0\in A\) with \(\gamma/4 \le x:=\{\alpha n_0+\theta\}\le 1-\gamma/4\) as required.

\end{proof}
We now show that when \(T\subset \mathbb{N}\) has positive upper density, the expression 
\(c G(n,m) - (a m + b n)\) with \(m = \lfloor \beta n \rfloor\) for general natural function $G\colon \mathbb{N}^2\to \mathbb{N}$ cannot be made to cluster arbitrarily closely around any single real number$-$there is a  minimum spread that depends on the density of \(T\) and the coefficient \(c\). This dispersion property, captured in the following lemma, will later prevent certain undesirable configurations in the proof of Theorem \ref{Rigidity}.

\begin{lem}\label{Lemma3-Rigidity}
Let \( a, b, c > 0 \), \(a/b \notin \mathbb{Q}\), and  \( \beta \in (0,1) \cap \mathbb{Q}\) as in \emph{Lemma \ref{Lemma4-Rigidity} \text{(}for $a,b$, $c$\text{)}} and \( G: \mathbb{N}^2 \to \mathbb{N} \). Let $T\subset \mathbb{N}$ with $d^{*}(T)=\gamma >0$. Define  
\[
S = \{ c G(n,m) - (a m + b n) : m = \lfloor \beta n \rfloor,\ n \in T \}.
\]  
Then for any \( A \in \mathbb{R} \) and any $\delta$ with $0<\delta<\delta_0:=\frac{\gamma c}{4}$
\[
S \not\subseteq [A-\delta, A+\delta].
\]
\end{lem}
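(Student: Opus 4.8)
The plan is to argue by contradiction. Suppose $S \subseteq [A-\delta, A+\delta]$ for some $A \in \mathbb{R}$ and some $0 < \delta < \delta_0 = \tfrac{\gamma c}{4}$. Then for every $n \in T$, with $m = \lfloor\beta n\rfloor$, the condition $cG(n,m)-(am+bn)\in[A-\delta,A+\delta]$ says precisely that the integer $G(n,m)$ lies in the closed interval centered at $\tfrac{am+bn+A}{c}$ of half-length $\tfrac{\delta}{c}$; since $G(n,m)\in\mathbb{Z}$ this forces
\[
\operatorname{dist}\!\Big(\tfrac{am+bn+A}{c},\,\mathbb{Z}\Big)\;\le\;\frac{\delta}{c}\;<\;\frac{\delta_0}{c}\;=\;\frac{\gamma}{4}\qquad\text{for all }n\in T.
\]
Writing $\lfloor\beta n\rfloor = \beta n - \{\beta n\}$ gives $am+bn = (a\beta+b)n - a\{\beta n\}$, so the argument of $\operatorname{dist}$ equals $\alpha n + \psi(n)$ with $\alpha := \tfrac{a\beta+b}{c}$ and $\psi(n) := \tfrac{A - a\{\beta n\}}{c}$. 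By the choice of $\beta$ (Lemma~\ref{Lemma4-Rigidity} applied to $a,b$ with the single constant $c$) the number $\alpha$ is irrational; and if $\beta = u/v$ in lowest terms, then $\{\beta n\}$, hence $\psi(n)$, depends only on $n \bmod v$.

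Next I would pass to a single residue class. Decompose $T = \bigsqcup_{r=0}^{v-1} T_r$ with $T_r = \{\,n\in T : n\equiv r \pmod{v}\,\}$, and write $T_r = \{\,vk+r : k\in\widetilde T_r\,\}$. Choosing $N_j\to\infty$ along which $|T\cap[1,N_j]|/N_j\to\gamma$ and using the crude bound $|T\cap[1,N_j]| \le v\max_{r}|\widetilde T_r\cap[1,\lfloor N_j/v\rfloor]|$, together with $v\lfloor N_j/v\rfloor\sim N_j$ and the pigeonhole principle over the finitely many residues, one finds an index $r_0$ with $d^*(\widetilde T_{r_0}) =: \gamma_0 \ge \gamma > 0$. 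For $n = vk+r_0$, $k\in\widetilde T_{r_0}$, the value $\{\beta n\} = \{\beta r_0\}$ is constant, so $\alpha n + \psi(n) = (v\alpha)k + \theta'$ where $\theta' := \alpha r_0 + \psi(r_0)$ is a constant and $v\alpha$ is irrational. Hence the previous displayed inequality becomes
\[
\operatorname{dist}\!\big((v\alpha)k + \theta',\,\mathbb{Z}\big) < \frac{\gamma}{4}\qquad\text{for every }k\in\widetilde T_{r_0}.
\]

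Finally, Lemma~\ref{LEMMA2NEW-RIG}, applied with its set taken to be $\widetilde T_{r_0}$ (of upper density $\gamma_0$), its irrational taken to be $v\alpha$, and its shift taken to be $\theta'$, produces some $k_0\in\widetilde T_{r_0}$ with $\{(v\alpha)k_0+\theta'\}\in[\gamma_0/4,\,1-\gamma_0/4]$, and therefore $\operatorname{dist}\big((v\alpha)k_0+\theta',\mathbb{Z}\big)\ge\gamma_0/4\ge\gamma/4$. This contradicts the displayed inequality above, so no such $A$ and $\delta$ can exist, proving $S\not\subseteq[A-\delta,A+\delta]$. The step I expect to be the main obstacle is the density bookkeeping in the third paragraph: one must verify that restricting $T$ to a residue class modulo $v$ and rescaling by $v$ does not drop the upper density below $\gamma$, which takes some care with the interplay of $\limsup$ and the maximum over residues (and with the floor functions). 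Everything else is essentially formal once the problem has been put in the shape ``$(v\alpha)k+\theta'$ stays $\gamma/4$-close to $\mathbb{Z}$ along a set of upper density $\ge\gamma$'', a configuration that Lemma~\ref{LEMMA2NEW-RIG} forbids.
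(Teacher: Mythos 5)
Your proof is correct, and in fact it patches a real gap in the paper's argument. Both you and the paper reduce the hypothesis $S\subseteq[A-\delta,A+\delta]$ to the statement that $\operatorname{dist}\big(\alpha n + \psi(n),\mathbb{Z}\big)<\delta/c$ for all $n\in T$, with $\alpha=(a\beta+b)/c$ irrational and $\psi(n)$ depending only on $n\bmod v$ (where $\beta=u/v$); both then aim to contradict this via Lemma~\ref{LEMMA2NEW-RIG}. The difference is that Lemma~\ref{LEMMA2NEW-RIG} is stated for a \emph{constant} shift $\theta$, whereas $\psi(n)$ is a finite-valued sequence; the paper invokes the lemma directly without reconciling this, which is strictly speaking not licensed. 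Your residue-class decomposition $T=\bigsqcup_r T_r$ with the rescaled sets $\widetilde T_r=\{k: vk+r\in T\}$ handles this cleanly, and, crucially, the rescaling preserves upper density (you get $d^*(\widetilde T_{r_0})\geq\gamma$ rather than the $\gamma/v$ that a naive pigeonhole over $T_r\subset\mathbb{N}$ would give), which is exactly what is needed to reach the stated threshold $\delta_0=\gamma c/4$ rather than a weaker one. So your argument is the same route done properly: the density bookkeeping you flag as the delicate step is indeed the step the paper skips, and your verification of it is sound (the chain $|T\cap[1,N_j]|\leq v\max_r|\widetilde T_r\cap[1,\lfloor N_j/v\rfloor]|+O(1)$, $v\lfloor N_j/v\rfloor\sim N_j$, then pigeonhole along a subsequence, gives $d^*(\widetilde T_{r_0})\geq\gamma$). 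One small point worth stating explicitly when you write it up: $v\alpha$ is irrational because $\alpha$ is irrational and $v\in\mathbb{N}\setminus\{0\}$, so Lemma~\ref{LEMMA2NEW-RIG} does apply with irrational $v\alpha$ and constant shift $\theta'$.
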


\begin{proof}
Let \( m = \lfloor \beta n \rfloor \) and \( A > 0 \). Define:
\[
B_n = c G(n, \lfloor \beta n \rfloor) - (a \lfloor \beta n \rfloor + b n) - A.
\]
Let \( k_n = G(n, \lfloor \beta n \rfloor) \in \mathbb{N} \). Then:
\[
B_n = c k_n - a \lfloor \beta n \rfloor - b n - A.
\]
Writing \( \lfloor \beta n \rfloor = \beta n + \varepsilon_n \), where \( \varepsilon_n = \lfloor \beta n \rfloor - \beta n \in (-1, 0] \), we obtain:
\[
B_n = c k_n - a(\beta n + \varepsilon_n) - b n - A = c k_n - (a \beta + b) n - a \varepsilon_n - A.
\]
Let \( \theta= -(a \beta + b) \). Then:
\[
B_n = c k_n + \theta n - a \varepsilon_n - A.
\]

\noindent Since \( k_n \in \mathbb{Z} \), we have modulo 1:
\[
\frac{B_n}{c} \equiv \frac{\theta}{c} n - \frac{a}{c} \varepsilon_n - \frac{A}{c} \pmod{1}.
\]
Define
\[
X_n = \frac{\theta}{c} n - \frac{a}{c} \varepsilon_n - \frac{A}{c}.
\]
The next step is to analyze \( X_n = \frac{\theta}{c} n - \frac{a}{c} \varepsilon_n - \frac{A}{c} \pmod{1} \).

\noindent Since \( \beta \) is rational, the sequence \( \{\beta n\} = \beta n - \lfloor \beta n \rfloor \) takes on only finitely many values in \( [0,1) \). Consequently, the sequence \( -\frac{a}{c}\varepsilon_n - \frac{A}{c} = \frac{a}{c}\{\beta n\} - \frac{A}{c} \) also takes on finitely many values, contained in the interval \( [-\frac{A}{c}, a - \frac{A}{c}) \).

\noindent Given that \( \frac{\theta}{c} = -\frac{a\beta + b}{c} \) is irrational, Lemma \ref{LEMMA2NEW-RIG} implies that there is $n_0\in T$ such that 
\[
\frac{\gamma}{4}\le X_{n_0} \, \, (\text{mod}\,\,  1) \le 1-\frac{\gamma}{4}.
\]
In particular, $\mathrm{dist}\Big(\frac{B_{n_0}}{c}, \mathbb{Z}\Big)=\text{dist}\Big ( X_{n_0}, \mathbb{Z}\Big)\geq \dfrac{\gamma}{4}$.\\
Now, for the sake of contradiction, suppose \( S \subseteq [A - \delta, A + \delta] \), for some $\delta<\dfrac{\gamma c}{4}$. This would imply \( \mathrm{dist}(\frac{B_n}{c}, \mathbb{Z}) < \dfrac{\delta}{c} \) for all \( n \). Which implies that $\dfrac{\gamma}{4}<\dfrac{\delta}{c} $. This implies a contradiction for the choice of \( \delta_0 = \dfrac{\gamma c}{4} \). It follows that for any \( \delta < \delta_0 \), the set \( S \) cannot be a subset of \( [A - \delta, A + \delta] \), which is the desired result.
\end{proof}



To conclude this subsection, we prove Theorem~\ref{Rigidity}. The proof leverages the hyperbolic structure of the basic set---using tools such as the shadowing lemma$---$to establish the appropriate setting for applying the preceding lemmas. It is worth noting that the proof follows a similar overall approach to \cite{Shaobo}, but we provide full details here because we need to produce certain specific combinations of Birkhoff sums for our argument. Moreover, some of these combinatorial structures will be needed again in the proof of Theorem~\ref{mainT2}. 

It is important to note, the proof also take advance in similar approch like \cite{Shabao}, but write all the details since we need some specfic combination/phenomenon. Moreover, some of this combinations will be necessary in the   proof of Theorem \ref{mainT2}

\begin{proof}[\emph{\textbf{Proof of Theorem \ref{Rigidity}}}]
First, if $f$ is dispersed, then by Theorem~\ref{Shaobo} and the fact that by hypotheses $f$ is non-arithmetic, the set $\mathcal{B}(f,\varphi,\Lambda)$ is dense in $\mathbb{R}$.  
But $\mathcal{B}(f,\varphi,\Lambda) \subset \bigcup_i a_i \mathbb{Z}$ is a countable union of discrete additive subgroups, hence nowhere dense in $\mathbb{R}$; this contradicts density in $\mathbb{R}$.  
Therefore, $f$ cannot be dispersed. Consequently, we may assume $f$ is not dispersed, i.e.
\[
\mathcal{B}(f,\varphi,\Lambda) \subset \bigcup_{i=1}^{l} a_i\mathbb{N}.
\]
\noindent The proof of this theorem relies on the fact that for some $i \neq j$ the ratio $a_i/a_j$ is irrational.\\
\ \\
\noindent \textbf{Claim:} There are $i\neq j$ such that  $\dfrac{a_i}{a_j}\notin Q$. 
\vspace{-0.3cm}
\begin{proof}[\emph{\textbf{Proof of Claim}}]
Assume, for the sake of contradiction, that all ratios $\dfrac{a_i}{a_j}$ are rational. 
Write each ratio $\dfrac{a_i}{a_{i+1}}$ in lowest terms as $\dfrac{p_i}{q_i}$, for $i=1,\dots l-1$, then

$$\mathcal{B}(f, \varphi, \Lambda)\subset \bigcup_{i=1}^{l}a_i\mathbb{N}\subset \frac{a_l}{\prod_{i=1}^{l-1}q_i}\mathbb{N}$$

Thus, we have a contradiction since $f$ is non-arithmetic. Hence the assumption that all ratios 
$\dfrac{a_i}{a_j}$   are rational is false, which completes the proof of the claim.
\end{proof}

\noindent From the Claim and the assumption that $f$ is non-arithmetic,  we can assume that there are two different indices $i_1, i_2$ such that $\dfrac{a_{i_1}}{a_{i_2}} \notin \mathbb{Q}$.

Choose periodic points $p,q$ with $2S(f,\varphi,p) = a_{i_1}n(p) := a$ and $2S(f,\varphi,q) = a_{i_2}n(q) = b$, where $n(p),n(q) \in \mathbb{N}$. Then $a/b \notin \mathbb{Q}$.

Since $\Lambda$ is a basic set, $p$ and $q$ are homoclinically related (see Remark \ref{Homoclinic}). Thus there exist points $x,y\in \Lambda$ such that 
\begin{equation}\label{Eq1:Rig}
    x \in W^s(p) \pitchfork W^u(q) \quad \text{and} \quad y \in W^s(q) \pitchfork W^u(p).
\end{equation}

\begin{center}
\usetikzlibrary{decorations.markings,arrows.meta}
\begin{tikzpicture}[
    arrow in middle/.style={
        decoration={
            markings,
            mark=at position 0.5 with {\arrow[line width=0.8pt, scale=1.5]{Latex[width=4pt, length=3pt]}}
        },
        postaction={decorate}
    }
]

\coordinate (A) at (0,0);
\coordinate (B) at (6,0);
\coordinate (C) at (6,4);
\coordinate (D) at (0,4);
\coordinate (A1) at (1,-1);
\coordinate (B1) at (5,-1);
\coordinate (C1) at (5,5);
\coordinate (D1) at (1,5);

\draw[green!30!black, line width=1.5pt, arrow in middle] (A) .. controls (3,0.3) .. (B) 
    node[midway, below] {$W^u(p)$};

\draw[green!30!black, line width=1.5pt, arrow in middle] (C) .. controls (3,4.3) .. (D) 
    node[midway, above] {$W^u(q)$};

\draw[blue!30!black, line width=1.5pt, arrow in middle] (B1) .. controls (4.6,2) .. (C1) 
    node[midway, right] {$W^s(q)$};

\draw[blue!30!black, line width=1.5pt, arrow in middle] (D1) .. controls (0.6,2) .. (A1) 
    node[midway, left] {$W^s(p)$};

\fill[red!60!black] (0.88,0.1) circle (2pt);
\node[below left] at (0.88,0.1) {$p$};

\fill[red!60!black] (0.78,0.8) circle (2pt);
\node[left] at (0.78,0.8) {\tiny{$\varphi^{L_4-1}(x)$}};

\fill[red!60!black] (0.78,3.4) circle (2pt);
\node[left] at (0.79,3.4) {\tiny{$\varphi(x)$}};
\fill[red!60!black] (4.79,0.8) circle (2pt);
\node[right ] at (4.8,0.8) {\tiny{$\varphi(y)$}};

\fill[red!60!black] (4.8,3.4) circle (2pt);
\node[right] at (4.8,3.4) {\tiny{$\varphi^{L_2-1}(y)$}};
\fill[red!60!black] (1.6,4.15) circle (2pt);
\node[above] at (1.6,4.2) {\tiny{$\varphi^{-L_3}(x)$}};

\fill[red!60!black] (4.15,4.15) circle (2pt);
\node[above] at (4.1,4.2) {\tiny{$\varphi^{-1}(x)$}};

\fill[red!60!black] (1.6,0.15) circle (2pt);
\node[below] at (1.6,0.15) {\tiny{$\varphi^{-L_1}(y)$}};

\fill[red!60!black] (4.15,0.15) circle (2pt);
\node[below] at (4.1,0.15) {\tiny{$\varphi^{-1}(y)$}};


\fill[red!60!black] (4.88,4.1) circle (2pt);
\node[above right] at (4.88,4.1) {$q$};

\fill[red!60!black] (4.88,0.1) circle (2pt);
\node[below right] at (4.88,0.1) {$y$};

\fill[red!60!black] (0.88,4.1) circle (2pt);
\node[above left] at (0.88,4.1) {$x$};

\node[below] at (current bounding box.south) {\textbf{Fig. 1:} Homoclinic Intersection};

\end{tikzpicture}
\end{center}

Without loss of generality, assume $\tau_p \leq \tau_q$ (otherwise exchange roles of $p$ and $q$).
Select $\beta \in \mathbb{Q} \cap (0,1)$ as in Lemma \ref{Lemma4-Rigidity} such that $\dfrac{a\beta+b}{a_i} \notin \mathbb{Q}$ for all $a_i$.
For $m,n \in \mathbb{N}$, define $L_1 = L_4 = m\tau_p$ and $L_2 = L_3 = n\tau_q$ with $m = \lfloor \beta n \rfloor$. Then $L_1 = L_4 \leq L_2 = L_3$, and 
\[
\lim_{n\to\infty}\frac{L_1}{L_2} = \beta\frac{\tau_p}{\tau_q}  :=2\alpha \in (0,1).
    \]
For sufficiently large $n$, we have $L_1 \geq \alpha L_2$, so setting $L_0 := \min \{L_1,L_2\} = L_1$ yields $L_0 \geq \alpha L_i$ for $i=1,2,3,4$. Construct the finite sequence:
\[
Q = \{\varphi^{-L_1}(y), \ldots, \varphi^{-1}(y), y, \varphi(y), \ldots, \varphi^{L_2 - 1}(y), \varphi^{-L_3}(x), \ldots, \varphi^{-1}(x), x, \varphi(x), \ldots, \varphi^{L_4 - 1}(x)\},
\]
which forms a $\delta$-pseudo-orbit for appropriate $\delta>0$.

From (\ref{Eq1:Rig}) and hyperbolicity, there exists $K > 0$ such that for all $n \geq 0$:
\begin{align}\label{eq2:Rigi}
d(\varphi^n(x), \varphi^n(p)) &\leq K \lambda^n d(x,p), & d(\varphi^{-n}(x), \varphi^{-n}(q)) &\leq K \lambda^n d(x,q), \\
d(\varphi^n(y), \varphi^n(q)) &\leq K \lambda^n d(y,q), & d(\varphi^{-n}(y), \varphi^{-n}(p)) &\leq K \lambda^n d(y,p). \nonumber
\end{align}
Note that $d(\varphi^{L_2}(q), \varphi^{-L_3}(q)) = 0$ since both $L_2$ and $L_3$ are multiples of $\tau_q$.

We estimate:
\begin{align*}
d(\varphi(\varphi^{L_2-1}(y)), \varphi^{-L_3}(x)) 
&\leq d(\varphi^{L_2}(y), \varphi^{L_2}(q)) + d(\varphi^{L_2}(q), \varphi^{-L_3}(q)) + d(\varphi^{-L_3}(q), \varphi^{-L_3}(x)) \\
&\leq K\lambda^{L_2}d(y,q) + K\lambda^{L_3}d(x,q).
\end{align*}
Similarly,
\[
d(\varphi(\varphi^{L_4-1}(x)), \varphi^{-L_1}(y)) \leq K\lambda^{L_4}d(x,p) + K\lambda^{L_1}d(y,p).
\]
Let $\delta_0 = \max\{d(x,p), d(y,q), d(x,q), d(y,p)\}$. With $L_0 := \min L_i$, $Q$ is a periodic $\delta$-pseudo-orbit with $\delta := 2K\lambda^{L_0}\delta_0$ and period $L = \sum_{i=1}^{4}L_i$.

By the Lipschitz pseudo-orbit shadowing Lemma for basic sets, there exists an $L$-periodic point $z$ that $\mu\delta$-shadows $Q$. Assume without loss of generality that $z$ begins shadowing at $y$:
\begin{align*}
d(\varphi^{-i}(z), \varphi^{-i}(y)) &\leq \mu\delta, & i=1,\dots, L_1, \\
d(\varphi^{-L_1-j}(z), \varphi^{L_4-j}(x)) &\leq \mu\delta, & j=1,\dots, L_4, \\
d(\varphi^{-(L_1+L_4)-j}(z), \varphi^{-j}(x)) &\leq \mu\delta, & j=1,\dots, L_3, \\
d(\varphi^{-(L_1+L_4+L_3)-j}(z), \varphi^{L_2-j}(y)) &\leq \mu\delta, & j=1,\dots,L_2.
\end{align*}

Now employ the H\"older continuity of $f$. Let $C > 0$ and $\theta \in (0,1)$ be the H\"older constants. From (\ref{eq2:Rigi}) and the shadowing estimates:
\begin{align*}
\sum_{i=1}^{L_1}|f(\varphi^{-i}(z)) - f(\varphi^{-i}(y))| 
&\leq C L_1 (\mu\delta)^\theta \\
&= C L_1 \mu^\theta (2K\lambda^{L_0}\delta_0)^{\theta} \\
&\leq C (2K\mu\delta_0)^{\theta} L_1\lambda^{\theta L_1}\\
&\leq C (2K\mu\delta_0)^{\theta} L_1\lambda^{\theta \alpha L_1}.
\end{align*}
Similarly,
\[
\sum_{i=1}^{L_2}|f(\varphi^{L_2-i}(z)) - f(\varphi^{L_2-i}(y))| \leq C (2K\mu\delta_0)^{\theta} L_2\lambda^{\theta \alpha L_2}.
\]
Since $\lambda \in (0,1)$, we have $\lim_{L_i \to \infty} L_i\lambda^{\theta \alpha L_i} = 0$ for each $i$.

Choose $\epsilon < \frac{\min a_i}{4k}$ (this choice of $\epsilon$ is relate with  Lemma \ref{Lemma3-Rigidity}). For sufficiently large $n$ and $m$ (ensuring $L_1 = \lfloor\beta n \rfloor\tau_p$ and $L_2=n\tau_q$ are large), we obtain:
\[
\sum_{i=1}^{L_1}\left(f(\varphi^{-i}(z)) - f(\varphi^{-i}(y))\right), \ 
\sum_{i=1}^{L_2}\left(f(\varphi^{L_2-i}(z)) - f(\varphi^{L_2-i}(y))\right) \in \left(-\frac{\epsilon}{9}, \frac{\epsilon}{9}\right).
\]
Summing yields:
\begin{equation}\label{eq3:Rigi}
    \sum_{i=-L_1}^{L_2-1}\left(f(\varphi^{i}(z)) - f(\varphi^{i}(y))\right) \in \left(-\frac{2\epsilon}{9}, \frac{2\epsilon}{9}\right).
\end{equation}
Similarly,
\begin{equation}\label{eq4:Rigi}
    \sum_{i=L_2}^{L-L_1}f(\varphi^{i}(z)) - \sum_{i=-L_3}^{L_4-1}f(\varphi^{i}(x)) \in \left(-\frac{2\epsilon}{9}, \frac{2\epsilon}{9}\right).
\end{equation}
Combining these estimates:
\begin{equation}\label{eq5:Rigi}
    \sum_{i=0}^{L-1}f(\varphi^{i}(z)) - \left(\sum_{j=1}^{L_1} f(\varphi^{-i}(y)) + \sum_{j=0}^{L_2 - 1} f(\varphi^{i}(y)) + \sum_{j=1}^{L_3} f(\varphi^{-i}(x)) + \sum_{j=0}^{L_4 - 1} f(\varphi^{i}(x))\right) \in \left(-\frac{4\epsilon}{9}, \frac{4\epsilon}{9}\right).
\end{equation}

\noindent We now compare these expressions with the Birkhoff sums at $p$ and $q$. Estimate:
\[
|f(\varphi^{-i \tau_p - j}(y)) - f(\varphi^{-j}(p))| \leq C K^\theta \lambda^{\theta (j+i \tau_p)} (d(y, p))^{\theta}.
\]
Since $\lambda \in (0,1)$, the series
\[
\sum_{j=0}^{\infty}\left(f(\varphi^{-i \tau_p - j}(y)) - f(\varphi^{-j}(p))\right)
\]
converges absolutely for each $i=1,\dots, \tau_p$. Define:
\begin{align*}
H_1 &:= \sum_{i=1}^{\tau_p}\sum_{j=0}^{\infty}\left(f(\varphi^{-i \tau_p - j}(y)) - f(\varphi^{-j}(p))\right), \\
H_2 &:= \sum_{i=0}^{\tau_q-1} \sum_{j=0}^{\infty} \left(f(\varphi^{i \tau_q + j}(y)) - f(\varphi^{j}(q))\right), \\
H_3 &:= \sum_{i=1}^{\tau_q} \sum_{j=0}^{\infty} \left(f(\varphi^{-i \tau_q - j}(x)) - f(\varphi^{-j}(q))\right), \\
H_4 &:= \sum_{i=0}^{\tau_p-1}\sum_{j=0}^{\infty}\left(f(\varphi^{i \tau_p + j}(x)) - f(\varphi^{j}(p))\right).
\end{align*}
For large $n$ and $m$:
\begin{align*}
\sum_{j=1}^{L_1} f(\varphi^{-i}(y)) - m S(f,\varphi,p) &\in \left(H_1 - \frac{\varepsilon}{9}, H_1 + \frac{\varepsilon}{9}\right), \\
\sum_{j=0}^{L_2 - 1} f(\varphi^{i}(y)) - n S(f,\varphi,q) &\in \left(H_2 - \frac{\varepsilon}{9}, H_2 + \frac{\varepsilon}{9}\right), \\
\sum_{j=1}^{L_3} f(\varphi^{-i}(x)) - n S(f,\varphi,q) &\in \left(H_3 - \frac{\varepsilon}{9}, H_3 + \frac{\varepsilon}{9}\right), \\
\sum_{j=0}^{L_4 - 1} f(\varphi^{i}(x)) - m S(f,\varphi,p) &\in \left(H_4 - \frac{\varepsilon}{9}, H_4 + \frac{\varepsilon}{9}\right).
\end{align*}
Combining with (\ref{eq5:Rigi}):
\begin{equation}\label{eq6:Rigi}
    \sum_{i=0}^{L-1}f(\varphi^{i}(z)) - \left(2mS(f,\varphi,p) + 2nS(f,\varphi,q)\right) \in \left(H - \frac{8\epsilon}{9}, H + \frac{8\epsilon}{9}\right),
\end{equation}
where $H = H_1 + H_2 + H_3 + H_4$.

The periodic point $z$ depends on $n$ and $m$ (through $m = \lfloor\beta n \rfloor$). By hypothesis $\sum_{i=0}^{L-1}f(\varphi^{i}(z)) \in \bigcup_{i=1}^{k}a_i\mathbb{N}$. Consider $m_0$ and $n_0$ large enough (sufficiently large such that $\frac{L_1}{L_2} > \alpha$) and such that (\ref{eq6:Rigi}) holds for all $m\geq m_0$ and $n\geq n_0$. Consider $z(n)$ the periodic point associated to $m=\lfloor\beta n \rfloor$ and $n$, for $n\geq n_0$. Define:
\[
A_i := \{n \geq n_0 : S(f,\varphi, z(n)) \in a_i\mathbb{N}\}.
\]
Then $\bigcup_{i=1}^{k}A_i = \mathbb{N} \cap [n_0, \infty)$. From Lemma \ref{LEMMA3NEW-RIG} there is $A_{i_0}$ with $d^*(A_{i_0})\geq\frac{1}{k}$ 
For $n_j \in A_{i_0}$, we have $S(f,\varphi, z(n_j)) \in a_{i_0}\mathbb{N}$. Define $G(n_j) = S(f,\varphi, z(n_j))/a_{i_0}$. By Lemma \ref{Lemma3-Rigidity}, with $a = 2S(f,\varphi,p)$ and $b = 2S(f,\varphi,q)$ satisfying $a/b \notin \mathbb{Q}$, the set
\[
S := \{a_{i_0}G(n) - (am + bn) : m = \lfloor\beta n \rfloor, n = n_j\}
\]
satisfies $S \not\subset [H - \rho, H + \rho]$ for all $\rho < a_{i_0}/4k$.

Since $8\epsilon/9 < \epsilon < \min a_i/4k \leq a_{i_0}/4k$, this contradicts (\ref{eq6:Rigi}).

We conclude that for all $p \in \text{Per}(\varphi)$, $S(f,\varphi, p) \in \bigcup_{i=1}^{k}a_i\mathbb{N}$ with rational ratios $a_i/a_j$. The result now follows from Case 1.
\end{proof}


\subsection{Proof of Theorem \ref{mainT2}}
Before to prove this Theorem we must to understand a little more the proof of the Theorem \ref{Rigidity}, more specifically, we must to estimative $H_i$, $i=1,\dots 4$.\\
In fact, note that $y\in W^u(x)$ and $f$ is H\"older constant $C>0$ and $0<\theta<1$.
\begin{align*}
|H_1| &\leq \sum_{i=1}^{\tau_p}\sum_{j=0}^{\infty}|f(\varphi^{-i \tau_p - j}(y)) - f(\varphi^{-j}(p))|, 
\leq \sum_{i=1}^{\tau_p}\sum_{j=0}^{\infty} C\cdot (d(\varphi^{-i \tau_p - j}(y), \varphi^{-j}(p)))^{\theta}\\
&\leq \sum_{i=1}^{\tau_p}\sum_{j=0}^{\infty} C\cdot K^\theta \cdot \lambda^{(i\tau_p+j)\theta}\cdot d(y,p)^{\theta}
= \sum_{i=1}^{\tau_p}C\cdot K^\theta \cdot \lambda^{i\tau_p} \dfrac{1}{1-\lambda^\theta}\cdot d(y,p)^{\theta}\\
&=\dfrac{C\cdot K^\theta}{1-\lambda^\theta}\cdot \dfrac{\lambda^{\theta\tau_p}(1-\lambda^{\theta{\tau^2_p}})}{1-\lambda^{\theta\tau_p}} d(y,p)^{\theta}: = F_1(\theta, p)d(y,p)^{\theta}
\end{align*}
Similarly, we can see that 
\begin{align*}
    |H_2| &=\dfrac{C\cdot K^\theta}{1-\lambda^\theta}\cdot \dfrac{1-\lambda^{\theta\tau_q^2}}{1-\lambda^{\theta\tau_q}}\cdot d(y,q)^{\theta}: = F_1(\theta, q)d(y,q)^{\theta}\\
    |H_3|&\leq \dfrac{C\cdot K^\theta}{1-\lambda^\theta}\cdot \dfrac{\lambda^{\theta\tau_q}(1-\lambda^{\theta\tau_q^2})}{1-\lambda^{\theta\tau_q}}d(x,q)^{\theta}:= F_3(\theta, q)d(x,q)^{\theta}\\
    |H_4| &\leq \dfrac{C\cdot K^\theta}{1-\lambda^\theta}\cdot \dfrac{1-\lambda^{\theta\tau_p^2}}{1-\lambda^{\theta\tau_p}} d(x,p)^{\theta}:=F_4(\theta, p)d(x,p)^{\theta}.
\end{align*}
Since $ \dfrac{1-\lambda^{\theta\tau_p^2}}{1-\lambda^{\theta\tau_p}}$ is bounded in $\tau_p$, then there is a constant $F>0$, such that $F_i\leq \Gamma$, $i=1,2,3,4$.
In particular, 

\begin{equation}\label{EQ1-Bracket}
    |H:=H_1+H_2+H_3+H_4|\leq \Gamma \cdot \max \{d(x,p)^{\theta}, d(x,q)^{\theta}, d(y,p)^{\theta}, d(y,q)^{\theta}\}.
\end{equation}

\begin{remark}
    From the point of view of bracket, for $p,q$ close enough, \emph{(\ref{EQ1-Bracket})} can be write as 
    \begin{equation}\label{EQ2-Bracket}
        |H|\leq \Gamma \cdot \max\left\{d(p, [p,q])^{\theta}, d(p, [q,p])^{\theta}, d(q, [p,q])^{\theta}, d(q, [q,p])^{\theta}\right\}.
    \end{equation}
\end{remark}
\begin{proof}[\emph{\textbf{Proof of Theorem \ref{mainT2}}}]
Assume the case $\mathcal{B}(f,\Lambda)\subset {\mathbb{R}}^{+}_{\geq 0}$; the other case is analogous. \\ If $\mathcal{B}(f,\Lambda)$ is dense in $[0,\infty)$, then of course, $f$ is not cohomologous to zero and $\inf \mathcal{B}(f,\Lambda)=0$. Let us prove the converse. \\
Let $A>0$ and $\varepsilon >0$ be small. Since $f$ is not cohomologous to zero and $\inf \mathcal{B}(f,\Lambda)=0$, there exists a sequence $p_n\in \text{Per}(\varphi)$ such that $S(f,\varphi, p_n)\to 0$.  

Let $\delta>0$ be as in Corollary \ref{Coro- Bracket}. From (\ref{EQ2-Bracket}), given $m_0,n_0\in \mathbb{N}$, there exist periodic points $p_n:=p$ and $q_n:=q$ with $d(p,q)<\delta$ and the following properties:

\begin{itemize}
    \item[(a)] $|H|\leq\Gamma\cdot 
\max\left\{d(p, [p,q])^\theta,\ d(p, [q,p])^\theta,\ d(q, [p,q])^\theta,\ d(q, [q,p])^\theta\right\} < \varepsilon
$;
    \item[(b)] $2m_0S(f,\varphi, p)<\dfrac{\varepsilon}{5}$ and $\Big\lfloor \dfrac{A}{S(f,\varphi, q)}\Big\rfloor>2n_0$, with $S(f,\varphi, q)<\dfrac{\varepsilon}{10}$.
\end{itemize}

Following the same argument as in the proof of Theorem \ref{Rigidity}, we obtain, similarly to (\ref{eq6:Rigi}), that there exists $z\in \text{Per}(\varphi)$ such that 

\[
\sum_{i=0}^{L-1}f(\varphi^{i}(z)) - \left(2m_0S(f,\varphi,p) + 2\Big\lfloor \dfrac{A}{2S(f,\varphi, q)}\Big\rfloor S(f,\varphi,q)\right) \in \left(H - \frac{8\epsilon}{9}, H + \frac{8\epsilon}{9}\right).
\]

From item (b) we have 
\[
\left(2m_0S(f,\varphi,p) + 2\Big\lfloor \dfrac{A}{2S(f,\varphi, q)}\Big\rfloor S(f,\varphi,q)\right)\in \Big[A+\frac{\epsilon}{5}, A+\frac{2\epsilon}{5}\Big).
\]
Thus, using item (a),
\[
\sum_{i=0}^{L-1}f(\varphi^{i}(z)) \in \Big[A+\frac{\epsilon}{5}, A+\frac{2\epsilon}{5}\Big) + \Big (-\frac{17\epsilon}{9}, \frac{17\epsilon}{9} \Big)\subset \Big( A- 2\epsilon, A+ 2\epsilon\Big),
\]
which establishes the desired density property.

\end{proof}

\section{Proof of Main Results for Flows}\label{Section for Flows}

The proof of this theorems for flows require the construction of a special hyperbolic set $\Lambda$ for $\phi^t$ and the construction of an appropriate Poincar\'e map associated with $\Lambda$, which will be done in the following three lemmas (cf. \cite{Romana1} for a good lecture about reduction to Poincar\'e maps). 

\subsection{Reduction to Poincar\'e Maps}\label{reduction via Poincare Maps}
\subsubsection{Existence of Flow Box Cover with Disjoint Bases}
\begin{lem}\label{Good Cross Section 1}
Let $\Lambda$ be a compact, locally maximal hyperbolic invariant set for a smooth flow $\varphi_t$ on a manifold $M$. Then there exists a finite collection of flow boxes $\{B_i\}_{i=1}^k$ covering $\Lambda$ such that their bases are disjoint.
\end{lem}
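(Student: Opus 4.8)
The plan is to start from an arbitrary finite cover of $\Lambda$ by embedded flow boxes and then \emph{trim} the cross-sections one at a time, greedily, so that they become pairwise disjoint; whatever is removed from a cross-section will be compensated by slightly enlarging the flow (time) direction of that box. To set up, recall that hyperbolicity for a flow forces $\Lambda$ to contain no fixed points (the splitting on $\Lambda$ has the form $E^s\oplus\mathbb{R}X\oplus E^u$ with the generator $X$ nowhere zero on $\Lambda$), so every point of $\Lambda$ is regular. By the flow-box theorem, each $x\in\Lambda$ admits an embedded flow box: a codimension-one disk $\Sigma_x$ transverse to the flow with $x\in\operatorname{int}\Sigma_x$, together with $\Delta_x>0$, such that $\Phi_x\colon\Sigma_x\times[-\Delta_x,\Delta_x]\to M$, $\Phi_x(\xi,t)=\varphi_t(\xi)$, is an embedding. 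The open sets $W_x:=\Phi_x\big(\operatorname{int}\Sigma_x\times(-\Delta_x/2,\Delta_x/2)\big)$ are neighbourhoods of the points $x$ and hence cover $\Lambda$, so by compactness finitely many of them, $W_1,\dots,W_m$, already do, with data $\Sigma_i,\Delta_i,\Phi_i$.

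Next I would pass to a cover in which all boxes have the \emph{same} height, since the trimming below requires the heights to be comparable and one cannot simply take $\min_i\Delta_i$ without destroying the covering property. Set $\epsilon:=\tfrac{1}{10}\min_i\Delta_i$. For each $i$ choose $t_{i,1},\dots,t_{i,n_i}\in(-\Delta_i/2,\Delta_i/2)$ so that the intervals $(t_{i,k}-\epsilon,t_{i,k}+\epsilon)$ cover $(-\Delta_i/2,\Delta_i/2)$, and consider the cross-sections $\Sigma_{i,k}:=\varphi_{t_{i,k}}(\Sigma_i)$. Since $(\zeta,u)\mapsto\varphi_u(\zeta)$ on $\Sigma_{i,k}\times[-3\epsilon,3\epsilon]$ equals $\Phi_i$ precomposed with the shift $(\varphi_{t_{i,k}}(\xi),u)\mapsto(\xi,\,u+t_{i,k})$, and $3\epsilon<\Delta_i/2$ keeps the second argument inside the domain of $\Phi_i$, this map is again an embedding; moreover the union over $k$ of the half-height-$\epsilon$ boxes built from the $\Sigma_{i,k}$ contains $W_i$. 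Relabelling, I may therefore assume
\[
\Lambda\subset\bigcup_{\alpha=1}^{N}\Phi_\alpha\big(\operatorname{int}\Sigma_\alpha\times(-\epsilon,\epsilon)\big),\qquad \Phi_\alpha\colon\Sigma_\alpha\times[-3\epsilon,3\epsilon]\to M \text{ an embedding.}
\]

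Now the trimming. Put $\widehat\Sigma_1:=\Sigma_1$, recursively set $\widehat\Sigma_\alpha:=\Sigma_\alpha\ \setminus\ \bigcup_{\beta<\alpha}\bigcup_{|t|\le\epsilon}\varphi_t\big(\widehat\Sigma_\beta\big)$ for $\alpha=2,\dots,N$, and let $\widehat B_\alpha:=\Phi_\alpha\big(\widehat\Sigma_\alpha\times(-2\epsilon,2\epsilon)\big)$ for each $\alpha$, which is a flow box with base $\widehat\Sigma_\alpha$ since its parametrization is a restriction of the embedding $\Phi_\alpha$. The bases are disjoint: if $z\in\widehat\Sigma_\alpha\cap\widehat\Sigma_{\alpha'}$ with $\alpha<\alpha'$, then $z\in\widehat\Sigma_{\alpha'}$ forces $z\notin\bigcup_{|t|\le\epsilon}\varphi_t(\widehat\Sigma_\alpha)$, contradicting $z=\varphi_0(z)\in\widehat\Sigma_\alpha$. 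And the $\widehat B_\alpha$ cover $\Lambda$: given $p\in\Lambda$, pick $\alpha$ with $p=\varphi_s(\xi)$, $\xi\in\operatorname{int}\Sigma_\alpha$, $|s|<\epsilon$; if $\xi\in\widehat\Sigma_\alpha$ then $p\in\widehat B_\alpha$, and otherwise $\xi=\varphi_t(\eta)$ for some $\beta<\alpha$, $\eta\in\widehat\Sigma_\beta$ and $|t|\le\epsilon$, whence $p=\varphi_{s+t}(\eta)$ with $|s+t|<2\epsilon$, i.e.\ $p\in\widehat B_\beta$. Discarding any empty $\widehat\Sigma_\alpha$, the family $\{\widehat B_\alpha\}$ is as required.

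The step I expect to be the genuine obstacle is the bookkeeping of scales in the trimming: one cuts using flow-time at most $\epsilon$ but takes the final boxes of half-height $2\epsilon$, with the ambient parametrization guaranteed only up to $3\epsilon$, and it is the \emph{inductive} form of the definition---removing flow-neighbourhoods of the already-trimmed $\widehat\Sigma_\beta$ rather than of the original $\Sigma_\beta$---that lets the covering step close in a single move; with the original cross-sections one would have to iterate and the heights could grow without control. A secondary, cosmetic point is that each $\widehat\Sigma_\alpha$ is a priori only a subset of the cross-section $\Sigma_\alpha$: if one wants the bases to be manifolds with corners, a preliminary small perturbation of the $\Sigma_\alpha$ preserving transversality to the flow makes all the intersections $\varphi_t(\widehat\Sigma_\beta)\cap\Sigma_\alpha$ above transverse. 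This is not needed for the statement but is convenient for the Poincar\'e-map construction in the following lemmas.
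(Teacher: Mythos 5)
Your proof is correct, but it takes a genuinely different route from the paper's. The paper makes the bases disjoint by invoking Bowen's construction of Markov partitions for Axiom~A flows, citing the existence of a finite family of pairwise-disjoint Markov cross-sections which it then thickens into flow boxes; the heavy lifting (disjointness of the sections) is delegated to that external theorem. You avoid Markov partitions entirely and use an elementary greedy trimming: start from an arbitrary finite cover of $\Lambda$ by flow boxes, normalize the heights, and recursively delete from each cross-section $\Sigma_\alpha$ the flow-shadow $\bigcup_{|t|\le\epsilon}\varphi_t(\widehat\Sigma_\beta)$ of the previously trimmed sections $\widehat\Sigma_\beta$, $\beta<\alpha$, compensating by doubling the time-extent of the final boxes. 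The key observation you make--that trimming against the \emph{already trimmed} $\widehat\Sigma_\beta$ rather than the original $\Sigma_\beta$ lets the covering step close in a single move with a uniform height budget--is exactly right, and the $\epsilon/2\epsilon/3\epsilon$ scale bookkeeping checks out. The trade-off is that the paper's route inherits the Markov rectangle structure for free (useful for coding arguments), while yours is self-contained and gives less extra structure; for the statement of this lemma and for its use in Lemma~\ref{poincare} that extra structure is not actually required. One point you should not dismiss as purely cosmetic: as written, $\widehat\Sigma_\alpha$ is the complement in $\Sigma_\alpha$ of a finite union of flow-tubes and need not be an embedded $(n-1)$-disk, whereas the paper's definition of a flow box requires the base to be one; your suggested fix (perturb the $\Sigma_\alpha$ so the intersections $\varphi_t(\widehat\Sigma_\beta)\cap\Sigma_\alpha$ are transverse, or simply shrink each $\widehat\Sigma_\alpha$ to a finite union of small disks still covering $\Lambda\cap\widehat\Sigma_\alpha$ in the flow-box sense) is the right repair and should be stated rather than relegated to an afterthought, since the downstream Poincar\'e-map construction does want honest sections. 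Note the paper has the same issue in its Steps~3--4 (open neighborhoods of Markov rectangles inside a section) and handles it with the same level of informality, so your proof is at least as rigorous as the original.
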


\begin{proof}
We proceed in several steps:

\paragraph{Step 1: Definitions and setup.}
A \textbf{flow box} is a set $B = \bigcup_{|t|<\epsilon} \varphi_t(D)$, where $D \subset M$ is an embedded $(n-1)$-disk transverse to the flow, called the \textbf{base}. Two flow boxes have \textbf{disjoint bases} if their bases $D_i, D_j$ are disjoint as subsets of $M$.

The hyperbolic splitting on $\Lambda$ is:
\[
T_\Lambda M = E^s \oplus E^u \oplus \mathbb{R} X,
\]
where $X$ is the generator of $\varphi_t$.

By \textbf{local maximality}, there exists an open neighborhood $U$ of $\Lambda$ such that
\[
\Lambda = \bigcap_{t \in \mathbb{R}} \varphi_t(U).
\]

\paragraph{Step 2: Local product structure.}
For a hyperbolic set, there exists $\delta>0$ such that for all $x,y \in \Lambda$ with $d(x,y) < \delta$, $W^s_\delta(x)$ and $W^u_\delta(y)$ intersect in exactly one point $[x,y] \in \Lambda$, and the intersection is transverse. Here $W^s_\delta(x)$ and $W^u_\delta(y)$ are local stable/unstable manifolds of size $\delta$.

This implies that for each $x \in \Lambda$, there is a neighborhood $N_x$ in $\Lambda$ homeomorphic to $(-\epsilon,\epsilon) \times D^s \times D^u$ (where $D^s, D^u$ are disks in $E^s, E^u$ respectively), with the flow acting as translation in the first coordinate.

\paragraph{Step 3: Choosing a finite cover of $\Lambda$ by flow boxes.}
Pick $r>0$ small enough so that for each $x \in \Lambda$,
\[
\Lambda \cap B(x,2r) \subset N_x
\]
in the local product chart.

Let $\{x_1,\dots,x_m\} \subset \Lambda$ be such that $\Lambda \subset \bigcup_{i=1}^m B(x_i, r)$.

In each $B(x_i, 2r)$, the set $\Lambda$ is a union of local orbits $\varphi_t(D_i)$, where $D_i$ is a local cross-section through $x_i$ given by $\{0\} \times D^s \times D^u$ in the local coordinates.

Let $S_i = D_i \cap \Lambda$. This $S_i$ is a Cantor set in general, but we can thicken it slightly in $M$ to a disk $\widetilde{D}_i$ transverse to the flow, such that $\widetilde{D}_i \subset B(x_i, 2r)$ and $S_i \subset \operatorname{int}(\widetilde{D}_i)$.

Define $B_i = \varphi_{(-\eta,\eta)}(\widetilde{D}_i)$ for small $\eta>0$ (which depend on $r$) so that $B_i \subset B(x_i, 2r)$.

Then $\{B_i\}$ covers $\Lambda$ because each orbit through $\Lambda$ passes within $r$ of some $x_i$, hence hits $\widetilde{D}_i$ in time $<\eta$ if $\eta$ is small enough relative to the flow speed and $r$.

\paragraph{Step 4: Making the bases disjoint.}
We have $\widetilde{D}_i$ possibly overlapping for different $i$. We want new bases $D_i'$ disjoint.

We use the \textbf{Markov partition} construction for flows (Bowen, 1970--73): There exists a finite collection of disjoint sections $\Sigma_1, \dots, \Sigma_k$ (each a union of disks) and a partition of $\Lambda \cap \bigcup \Sigma_j$ into rectangles $R_\alpha \subset \Sigma_j$ such that:
\begin{enumerate}
\item Each orbit of $\Lambda$ meets $\bigcup_j \Sigma_j$.
\item The first return map to $\bigcup_j \Sigma_j$ is a Markov map.
\item The $R_\alpha$ have disjoint interiors within each $\Sigma_j$, and if $R_\alpha \subset \Sigma_j$, $R_\beta \subset \Sigma_\ell$ with $j \neq \ell$, then $R_\alpha$ and $R_\beta$ are disjoint as subsets of $M$ because $\Sigma_j \cap \Sigma_\ell = \varnothing$ for $j \neq \ell$.
\end{enumerate}

Let $D_\alpha$ be a small open neighborhood of $R_\alpha$ within $\Sigma_j$, such that:
\begin{itemize}
\item $D_\alpha \subset \Sigma_j$,
\item $D_\alpha \cap D_\beta = \varnothing$ for $\alpha \neq \beta$,
\item $\Lambda \cap \Sigma_j \subset \bigcup_\alpha D_\alpha$.
\end{itemize}

Now define flow boxes $B_\alpha = \varphi_{(-\epsilon,\epsilon)}(D_\alpha)$ for small $\epsilon>0$ less than the return time to $\bigcup \Sigma_j$.

\paragraph{Step 5: Verifying the cover.}
Any $z \in \Lambda$ has its orbit hitting some $R_\alpha$ in $\bigcup \Sigma_j$ by property (1) of Markov partitions. Thus $z = \varphi_t(y)$ for some $y \in R_\alpha \subset D_\alpha$ and $|t|<\epsilon$ if we choose $\epsilon$ small enough (less than the minimum return time to $\bigcup \Sigma_j$ for points in $R_\alpha$).

Hence $z \in B_\alpha$.

The bases $D_\alpha$ are disjoint by construction.

\paragraph{Step 6: Conclusion.}
We have constructed a finite set of flow boxes $B_\alpha$ covering $\Lambda$, with disjoint bases $D_\alpha$.
\end{proof}

\subsubsection{Poincar\'e Map on Disjoint Base Sections Preserves Basic Set Structure}

\begin{lem}\label{poincare}
Let $\Lambda$ be a compact, locally maximal hyperbolic basic set for a smooth flow $\varphi_t$ on a manifold $M$. Let $\mathcal{D} = \bigcup_{i=1}^k D_i$ be a finite collection of disjoint transverse sections such that $\Lambda \subset \bigcup_{i=1}^k \varphi_{(-\epsilon,\epsilon)}(D_i)$ for some $\epsilon > 0$. Let $\mathcal{P}: \mathcal{D}  \to \mathcal{D}$ be the Poincar\'e first return map. Then $\Lambda \cap \mathcal{D}$ is a basic set for $\mathcal{P}$.
\end{lem}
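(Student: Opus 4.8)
The plan is to verify directly the four clauses in the definition of a basic set for the diffeomorphism $\mathcal{P}$ on $\Lambda\cap\mathcal{D}$: that it is a compact $\mathcal{P}$-invariant hyperbolic set, that $\mathcal{P}$ restricted to it is transitive, that its periodic points are dense, and that it is locally maximal.

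First I would record the elementary structural facts. Since $\Lambda$ is compact, flow-invariant, and contained in $\bigcup_i \varphi_{(-\epsilon,\epsilon)}(D_i)$ with the $D_i$ disjoint and transverse to the flow, the first-return time $\tau$ is well defined, continuous and bounded on $\Lambda\cap\mathcal{D}$, say $0<\tau_{\min}\le\tau\le\tau_{\max}<\infty$ (here $\Lambda\cap\mathcal{D}$ is compact by the flow-box/Markov construction of Lemma \ref{Good Cross Section 1}). Consequently $\mathcal{P}$ is a smooth diffeomorphism on a neighborhood of $\Lambda\cap\mathcal{D}$ in $\mathcal{D}$, it maps $\Lambda\cap\mathcal{D}$ onto itself by flow-invariance of $\Lambda$, and one has the comparison $\mathcal{P}^n(x)=\varphi_{T_n(x)}(x)$ with $n\tau_{\min}\le T_n(x)\le n\tau_{\max}$; this is the device converting flow estimates into estimates for $\mathcal{P}$.

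For hyperbolicity I would transport the flow splitting $T_\Lambda M=E^s\oplus E^u\oplus\mathbb{R}X$ to the sections: let $\pi_x\colon T_xM\to T_xD_i$ be the projection along $\mathbb{R}X(x)$ (uniformly bounded, since the angle between $\mathbb{R}X$ and $E^s\oplus E^u$ is bounded below on the compact set $\Lambda$), and set $\widetilde{E}^{s/u}_x:=\pi_x(E^{s/u}_x)$. Because $D\varphi_t$ preserves $E^s$, $E^u$ and the flow direction, $D\mathcal{P}$ preserves $\widetilde{E}^s\oplus\widetilde{E}^u=T(\Lambda\cap\mathcal{D})$, and up to the bounded isomorphisms $\pi$ the map $D\mathcal{P}^n$ on $\widetilde{E}^s$ (resp. $\widetilde{E}^u$) is $D\varphi_{T_n}$ on $E^s$ (resp. $E^u$); together with $T_n\ge n\tau_{\min}$ this yields uniform exponential contraction and expansion, after passing to an adapted norm if necessary. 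For transitivity and density of periodic points I would transfer the corresponding properties of $\varphi_t|_\Lambda$ through the flow boxes: if $V\subset\Lambda\cap\mathcal{D}$ is relatively open around $x\in D_i$, then $\varphi_{(-\eta,\eta)}(V)$ is a relative neighborhood of $x$ in $\Lambda$, and any point of $\Lambda$ in it returns to $\mathcal{D}$ at a point of $\Lambda\cap\mathcal{D}$ which, for $\eta$ small, lies in $V$ by transversality. Applying this to a dense flow orbit (transitivity of $\varphi_t|_\Lambda$) produces a dense $\mathcal{P}$-orbit, and applying it to the closed orbits of $\varphi_t$ in $\Lambda$ (dense, since $\Lambda$ is a basic set for the flow) gives density of $\mathrm{Per}(\mathcal{P})$ in $\Lambda\cap\mathcal{D}$.

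For local maximality I would argue via the local product structure: $\Lambda$, being a basic set, is locally maximal and so carries a bracket $[\cdot,\cdot]$; since $\pi_x$ identifies $E^{s/u}$ with $\widetilde{E}^{s/u}$, the local stable (resp. strong unstable) manifold of $\mathcal{P}$ at $x$ is the trace on $D_i$ of the corresponding flow manifold, so the flow bracket restricts to a bracket on $\Lambda\cap\mathcal{D}$ with values in $\Lambda\cap\mathcal{D}$; a hyperbolic set with such a local product structure is locally maximal, which closes the argument. (Alternatively, by hand: take an open $U\supset\Lambda$ with $\Lambda=\bigcap_t\varphi_t(U)$, shrink $U$ inside the flow-box cover, set $W:=U\cap\mathcal{D}$, and note that a point of $\overline{W}$ whose full $\mathcal{P}$-orbit stays in $\overline{W}$ has full flow orbit in $\overline{U}$, hence lies in $\Lambda$.) I expect the delicate point to be the hyperbolicity transfer: checking that projection along the flow distorts the contraction and expansion rates by at most a bounded factor, so that an adapted metric recovers genuine exponential estimates, and that $D\mathcal{P}$-invariance of $\widetilde{E}^s\oplus\widetilde{E}^u$ really follows from $D\varphi_t$-invariance of the flow splitting; the remaining clauses are conceptually routine once the flow-box geometry is in place, though the step ``shrink $\eta$ so the return lands in $V$'' must be stated with care.
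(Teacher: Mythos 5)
Your proof follows essentially the same route as the paper's: verify hyperbolicity, transitivity, and local maximality by transferring each property from the flow to the section via the flow-box geometry and the boundedness of return times. Two points are worth noting. For the invariant splitting, you construct $\widetilde{E}^{s/u}_x$ by projecting $E^{s/u}_x$ onto $T_x\mathcal{D}$ along $\mathbb{R}X(x)$, with bounded distortion absorbed into an adapted metric, whereas the paper writes $E^s_{\mathcal{P}}(x)=E^s(x)\cap T_x\mathcal{D}$; your projection formulation is the more robust one, since the intersection only has the right dimension when the cross-sections are adapted so that $E^s\oplus E^u\subset T_x\mathcal{D}$, a hypothesis the paper does not explicitly arrange. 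You also verify density of $\mathrm{Per}(\mathcal{P})$ in $\Lambda\cap\mathcal{D}$ by passing closed flow orbits through the flow boxes, a clause in the definition of basic set that the paper's proof quietly omits (it only addresses hyperbolicity, local maximality, and transitivity). For local maximality your primary argument via local product structure is a clean alternative, and your parenthetical ``by hand'' argument with $W=U\cap\mathcal{D}$ coincides with the paper's. Overall the proposal is correct and, if anything, slightly more complete on the splitting and periodic-density points.
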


\begin{proof}
We verify the three properties of a basic set for the discrete dynamical system $(\mathcal{P}, \Lambda \cap \mathcal{D})$.

\paragraph{Step 1: Hyperbolicity of $\mathcal{P}$ on $\Lambda \cap \mathcal{D}$.}

Since $\Lambda$ is hyperbolic for the flow, we have the continuous splitting:
\[
T_x M = E^s(x) \oplus E^u(x) \oplus \mathbb{R} X(x) \quad \text{for all } x \in \Lambda,
\]
where $X$ is the generator of $\varphi_t$, with uniform contraction/expansion rates.

For $x \in \mathcal{D} \cap \Lambda$, the tangent space $T_x \mathcal{D}$ is transverse to $X(x)$. The hyperbolicity of $\mathcal{P}$ follows from:
\begin{itemize}
\item The stable manifold $W^s(x) \cap \mathcal{D}$ (for the flow) provides the stable manifold for $\mathcal{P}$, with $E^s_\mathcal{P}(x) = E^s(x) \cap T_x \mathcal{D}$.
\item The unstable manifold $W^u(x) \cap \mathcal{D}$ provides the unstable manifold for $\mathcal{P}$, with $E^u_\mathcal{P}(x) = E^u(x) \cap T_x \mathcal{D}$.
\item The splitting $T_x \mathcal{D} = E^s_\mathcal{P}(x) \oplus E^u_\mathcal{P}(x)$ is invariant under $D\mathcal{P}$ since the flow invariant manifolds are preserved by $\varphi_t$ and hence by $\mathcal{P}$.
\item Uniform contraction/expansion under $D\mathcal{P}$ follows from the flow's uniform rates and the fact that the return time $\tau(x)$ for $\mathcal{P}$ is bounded away from $0$ and $\infty$ by compactness of $\Lambda$ and transversality of $\mathcal{D}$.
\end{itemize}
Thus, $\Lambda \cap \mathcal{D}$ is uniformly hyperbolic for $\mathcal{P}$.

\paragraph{Step 2: Local maximality of $\Lambda \cap \mathcal{D}$ for $\mathcal{P}$.}

Since $\Lambda$ is locally maximal for the flow, there exists an open neighborhood $U$ of $\Lambda$ such that:
\[
\Lambda = \bigcap_{t \in \mathbb{R}} \varphi_t(U).
\]
Define $V = U \cap \mathcal{D}$, which is an open neighborhood of $\Lambda \cap \mathcal{D}$ in $\mathcal{D}$.

We claim:
\[
\Lambda \cap \mathcal{D} = \bigcap_{n \in \mathbb{Z}} \mathcal{P}^n(V).
\]

\emph{Proof of claim:}
\begin{itemize}
\item[($\subseteq$)] If $x \in \Lambda \cap \mathcal{D}$, then its entire flow orbit lies in $\Lambda \subset U$, so all its $\mathcal{P}$-iterates remain in $V$. Thus $x \in \bigcap_{n \in \mathbb{Z}} \mathcal{P}^n(V)$.

\item[($\supseteq$)] If $x \in \bigcap_{n \in \mathbb{Z}} \mathcal{P}^n(V)$, then there exists a full $\mathcal{P}$-orbit $(x_n)$ in $V$ with $x_0 = x$. By the construction of $\mathcal{P}$, there exists a flow orbit $\gamma$ through $x$ that intersects $\mathcal{D}$ at each $x_n$. Since each $x_n \in V \subset U$, the flow orbit $\gamma$ remains in $U$ for all time. By local maximality of $\Lambda$ for the flow, $\gamma \subset \Lambda$. Hence $x \in \Lambda \cap \mathcal{D}$.
\end{itemize}
This establishes local maximality of $\Lambda \cap \mathcal{D}$ for $\mathcal{P}$.

\paragraph{Step 3: Transitivity of $\Lambda \cap \mathcal{D}$ for $\mathcal{P}$.}

Since $\Lambda$ is a basic set for the flow, it is transitive: there exists $z \in \Lambda$ whose flow orbit $\{\varphi_t(z) : t \in \mathbb{R}\}$ is dense in $\Lambda$.

Let $\{t_n\}$ be the sequence of times (bi-infinite) such that $\varphi_{t_n}(z) \in \mathcal{D}$, with $t_0 = 0$ (after possible time shift). This sequence is infinite in both directions because the flow orbit is dense and $\mathcal{D}$ is a union of transverse sections covering $\Lambda$ in the flow-box sense.

Let $x_n = \varphi_{t_n}(z) \in \Lambda \cap \mathcal{D}$. Then $x_{n+1} = \mathcal{P}(x_n)$. The density of the flow orbit in $\Lambda$ implies that $\{x_n\}$ is dense in $\Lambda \cap \mathcal{D}$ under the $\mathcal{P}$-iteration. Thus $\Lambda \cap \mathcal{D}$ is transitive for $\mathcal{P}$.

\paragraph{Conclusion.}
We have shown that $\Lambda \cap \mathcal{D}$ is:
\begin{enumerate}
\item uniformly hyperbolic for $\mathcal{P}$,
\item locally maximal for $\mathcal{P}$,
\item transitive for $\mathcal{P}$.
\end{enumerate}
Therefore, $\Lambda \cap \mathcal{D}$ is a basic set for the Poincar\'e map $\mathcal{P}$.

\end{proof}

For transitive Anosov flow, we obtain:

\begin{lem}\label{Good Cross Section} Let $\phi^t\colon N \to N$ be a transitive Anosov flow and $\theta, \omega \in \text{Per}(\phi^t)$, then 
\begin{enumerate}
    \item[\emph{1}.] There is a basic set  $\Lambda(\theta,\omega)\subsetneq N$ for $\phi^t$ which contains $\theta$ and $\omega$.
    \item[\emph{2}.] There are  $\gamma>0$ and a finite number of cross section $\Sigma_i$, $i=1,\dots, k$  such that 
$$\Lambda(\theta, \omega) \subset \bigcup_{i=1}^k \phi^{(-\delta, \delta)}(\emph{int}\,\Sigma_i),$$
with $\Sigma_i\cap \Sigma_j= \emptyset$.
\end{enumerate}
\end{lem}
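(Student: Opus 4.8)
The idea is to construct $\Lambda(\theta,\omega)$ as a Smale--Birkhoff ``horseshoe'' built from the heteroclinic web of the two periodic orbits, check it is a \emph{proper} basic set, and then obtain part~2 as an immediate application of Lemma~\ref{Good Cross Section 1}.

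\textbf{Step 1: the periodic orbits are homoclinically related.} Since $\phi^t$ is a transitive Anosov flow on the connected manifold $N$, the center-stable and center-unstable manifolds of every orbit are dense in $N$. By a dimension count, $W^{cu}(\theta)$ meets $W^{cs}(\omega)$ transversally and $W^{cu}(\omega)$ meets $W^{cs}(\theta)$ transversally; this is the flow analogue of Remark~\ref{Homoclinic}. Fix heteroclinic orbits $\Gamma_{\theta\to\omega}\subset W^{cu}(\theta)\cap W^{cs}(\omega)$ and $\Gamma_{\omega\to\theta}\subset W^{cu}(\omega)\cap W^{cs}(\theta)$ (if $\theta=\omega$, use instead a single transverse homoclinic orbit of $\theta$; the argument is the same and easier).

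\textbf{Step 2: the basic set.} Consider all bi-infinite admissible concatenations of four ``blocks''---one period of $\theta$, one period of $\omega$, a long initial segment of $\Gamma_{\theta\to\omega}$, a long initial segment of $\Gamma_{\omega\to\theta}$---with the obvious transition rules, and let $\Lambda(\theta,\omega)$ be the closure of the set of genuine $\phi^t$-orbits that shadow these (periodic and aperiodic) pseudo-orbits, exactly as the shadowing lemma was used in the proof of Theorem~\ref{Rigidity}. Standard arguments (Smale--Birkhoff, or Bowen's construction of hyperbolic sets out of families of shadowable pseudo-orbits) show that $\Lambda(\theta,\omega)$ is compact, $\phi^t$-invariant, locally maximal and hyperbolic, that it is transitive because the transition graph on the four blocks is strongly connected, and that it contains $\theta$ and $\omega$ (the constant concatenations); hence it is a basic set containing both. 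To see $\Lambda(\theta,\omega)\subsetneq N$: for any cross-section $\Sigma$ through a point of $\Lambda(\theta,\omega)$ the trace $\Lambda(\theta,\omega)\cap\Sigma$ is totally disconnected (a Cantor set, modeled on the underlying subshift of finite type), so $\Lambda(\theta,\omega)$ has empty interior in $N$; but if a closed $\phi^t$-invariant set had nonempty interior, that interior would be a nonempty open invariant set, hence dense by transitivity of the flow, forcing the set to be all of $N$. Therefore $\Lambda(\theta,\omega)\neq N$, proving part~1.

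\textbf{Step 3: cross-sections (part 2).} The set $\Lambda(\theta,\omega)$ is a compact, locally maximal, hyperbolic invariant set for the smooth flow $\phi^t$, so Lemma~\ref{Good Cross Section 1} applies verbatim and yields a finite collection of flow boxes $B_i=\phi^{(-\delta,\delta)}(\Sigma_i)$, $i=1,\dots,k$, covering $\Lambda(\theta,\omega)$ with pairwise disjoint bases $\Sigma_i$; since the bases in that lemma are open neighborhoods of the corresponding Markov rectangles, the covering is in fact by the interiors $\operatorname{int}\Sigma_i$, which is exactly the asserted statement (with $\gamma=\delta$).

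\textbf{Main obstacle.} The delicate point is Step~2: exhibiting a \emph{single} locally maximal hyperbolic set containing \emph{both} periodic orbits---merely producing an invariant hyperbolic set is easy, but local maximality requires choosing the blocks long enough and the pseudo-orbits with enough slack that every orbit staying in a small neighborhood of $\Lambda(\theta,\omega)$ is coded by an admissible concatenation; this uses uniform hyperbolicity together with expansiveness of the flow restricted to a neighborhood of $\Lambda(\theta,\omega)$. Once this coding is in place, transitivity of the coded set, the inclusion of $\theta,\omega$, and properness via the Cantor cross-section are all routine, and part~2 is immediate from Lemma~\ref{Good Cross Section 1}.
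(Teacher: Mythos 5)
Your argument is correct and follows essentially the same route as the paper: transitivity of the Anosov flow gives the transverse heteroclinic connections between $\mathcal{O}(\theta)$ and $\mathcal{O}(\omega)$, the Smale--Birkhoff horseshoe construction (the paper cites \cite[Theorem 6.5.5]{Katok - Hasselblatt}) produces the basic set $\Lambda(\theta,\omega)$ containing both orbits, and part~2 is then a direct application of Lemma~\ref{Good Cross Section 1}. The only thing you spell out that the paper leaves implicit is the verification that $\Lambda(\theta,\omega)\subsetneq N$ via the Cantor cross-section and the nonempty-interior argument, which is a welcome clarification rather than a different method.
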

\begin{proof}
    From Lemma \ref{Good Cross Section 1}, it suffices to prove item 1. Since \(\phi^t\) is transitive, the spectral decomposition theorem (see \cite[Section 18.3]{Katok - Hasselblatt}) implies that \(\theta\) and \(\omega\) are related, meaning
\[
W^{cs}(\theta) \pitchfork W^u(\omega) \quad \text{and} \quad W^{cu}(\theta) \pitchfork W^s(\omega).
\]
Hence, the construction of \(\Lambda(\theta,\omega)\) follows the same argument as in \cite[Theorem 6.5.5]{Katok - Hasselblatt}.
\end{proof}
\subsection{Proof of Theorem \ref{MainT2}}

\begin{proof}[\emph{\textbf{Proof of of Theorem \ref{MainT2}}}]
First for basic sets, assume \( f \) is a dispersed and non-arithmetic function, meaning there exist \( \theta, \omega \in \Lambda \) such that
\[
\oint_{\mathcal{O}(\theta)} f < 0 < \oint_{\mathcal{O}(\omega)} f .
\]
Let \( \mathcal{P} : \mathcal{D} \to \mathcal{D} \) be the Poincar\'e first return map provided by Lemma \ref{Good Cross Section 1} and Lemma \ref{poincare}. For each \( x \in \mathcal{D} \), let \( \mathcal{P}(x) = \phi^{t(x)}(x) \). We then define the \( C^1 \) function \( \tilde{f} : \mathcal{D} \to \mathbb{R} \) by
\[
\tilde{f}(x) = \int_{0}^{t(x)} f(\phi^s(x)) \, ds .
\]
If \( x \in \operatorname{Per}(\phi^t) \cap \mathcal{D} \), then \( x \) is periodic for \( \mathcal{P} \) with period \( \tau_\mathcal{P}(x) \), and
\[
\oint_{\mathcal{O}(x)} f = \sum_{i=0}^{\tau_{\mathcal{P}}(x)-1} \tilde{f}(\mathcal{P}^i(x)) = S(\tilde{f}, \mathcal{P}, x).
\]
In other words, \( \mathcal{B}(f, \phi^t|_{\Lambda}) = \mathcal{B}(\tilde{f}, \mathcal{P}, \mathcal{D} \cap \Lambda) \).
Now, for a general \( x \in \Lambda \), let
\[
t_1(x) = \min\{ t \geq 0 : \phi^t(x) \in \mathcal{D} \},
\]
and set \( x_1 = \phi^{t_1(x)}(x) \in \mathcal{D} \). Applying this to \( \theta \) and \( \omega \) gives
\[
S(\tilde{f}, \mathcal{P}, \theta_1) < 0 < S(\tilde{f}, \mathcal{P}, \omega_1).
\]
Therefore, by Theorem \ref{Shaobo}, we have 
\(
\overline{\mathcal{B}(\tilde{f}, \mathcal{P}, \mathcal{D} \cap \Lambda)} = \mathbb{R},
\)
and consequently
\[
\overline{\mathcal{B}(f, \phi^t|_{\Lambda})} = \mathbb{R},
\]
as desired. \\
If $\phi^t$ Anosov, it is Axiom A and the result follow for the first case. For the case of transitive Anosov flows, If $f$ is dispersed and non-arithmetic, then there exist $ \theta, \omega \in \operatorname{Per}(\phi^t)$ such that
\[
\oint_{\mathcal{O}(\theta)} f < 0 < \oint_{\mathcal{O}(\omega)} f .
\]
By Lemma \ref{Good Cross Section}, there exists a basic set $\Lambda(\theta, \omega) \subset N$ containing both $\theta$ and $\omega$. The desired conclusion then follows directly from the fist part of the theorem.

\end{proof}

When $f$ is not dispersed, we have the following result, in the same spirit of Theorem \ref{mainT}.
we obtain 

\begin{coro} \label{C3-GS}
   If $\phi^t$ is an Anosov flow and $f$ is a H\"older continuous function, then the following are equivalent: 
\begin{itemize}
    \item[\emph{(i)}] $\mathcal{B}(f,\phi^t)$ is a bounded set.
    
    \item[\emph{(ii)}] $f$ is cohomologous to $0$. 

    \item[\emph{(iii)}] $\mathcal{B}(f,\phi^t)=\{0\}$. 
\end{itemize}
\end{coro}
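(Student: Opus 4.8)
The plan is to reduce the flow statement to the discrete case via the Poincaré map machinery already developed, and then invoke Theorem \ref{mainT} (the diffeomorphism version) applied to each basic set in the spectral decomposition of the Anosov flow. First, the implications (iii) $\Rightarrow$ (ii) and (ii) $\Rightarrow$ (i) are immediate: if $\mathcal{B}(f,\phi^t) = \{0\}$ then the Liv\v{s}ic theorem for flows (every closed-orbit integral vanishes) gives that $f$ is cohomologous to zero, and a coboundary $f = Xu$ (equivalently $\oint_{\mathcal{O}} f = 0$ over every closed orbit) trivially has $\mathcal{B}(f,\phi^t) = \{0\} \subset [-1,1]$, hence bounded. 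So the whole content is (i) $\Rightarrow$ (ii), which I will prove by contraposition: assuming $f$ is not cohomologous to zero, I produce closed orbits whose $f$-integrals are arbitrarily large in absolute value.

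Since $\phi^t$ is Anosov it is Axiom A, so by spectral decomposition the non-wandering set is $N = \Omega_1 \cup \cdots \cup \Omega_m$ (for a transitive Anosov flow $m=1$ and $\Omega_1 = N$). Because $f$ is not cohomologous to zero, by the Liv\v{s}ic theorem there is a closed orbit $\mathcal{O}_0$ with $\oint_{\mathcal{O}_0} f \neq 0$; this orbit lies in some $\Omega_{i_0}$, and $f|_{\Omega_{i_0}}$ is not cohomologous to zero there. Now fix any second closed orbit $\mathcal{O}_1 \subset \Omega_{i_0}$, and use Lemma \ref{Good Cross Section} (or, in the non-transitive case, the analogous construction of a sub-basic-set inside $\Omega_{i_0}$ containing two prescribed closed orbits together with Lemma \ref{Good Cross Section 1}) to obtain a proper basic set $\Lambda' \subset \Omega_{i_0}$ for $\phi^t$ with a finite family of disjoint cross-sections $\{\Sigma_j\}$ so that $\Lambda' \subset \bigcup_j \phi^{(-\delta,\delta)}(\operatorname{int}\Sigma_j)$. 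By Lemma \ref{poincare}, $\Lambda' \cap \mathcal{D}$ (with $\mathcal{D} = \bigcup_j \Sigma_j$) is a basic set for the Poincaré return map $\mathcal{P}$, and the induced observable $\tilde f(x) = \int_0^{t(x)} f(\phi^s(x))\,ds$ is H\"older on $\mathcal{D}$ with $\mathcal{B}(f,\phi^t|_{\Lambda'}) = \mathcal{B}(\tilde f, \mathcal{P}, \Lambda'\cap\mathcal{D})$, exactly as in the proof of Theorem \ref{MainT2}.

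The key point is then that $\tilde f$ is \emph{not} cohomologous to zero for $\mathcal{P}$: if it were, then all its Birkhoff sums over $\mathcal{P}$-periodic points would vanish, hence all closed-orbit integrals $\oint_{\mathcal{O}} f$ for closed orbits meeting $\mathcal{D}$ would vanish, contradicting the choice of $\mathcal{O}_0 \subset \Lambda'$ (we arrange $\Lambda'$ to contain $\mathcal{O}_0$ and $\mathcal{O}_1$, so $\mathcal{O}_0$ meets $\mathcal{D}$). Applying Theorem \ref{mainT} to the pair $(\mathcal{P}, \Lambda'\cap\mathcal{D})$ and the H\"older observable $\tilde f$, the failure of (ii) forces the failure of (i): $\mathcal{B}(\tilde f, \mathcal{P}, \Lambda'\cap\mathcal{D})$ is unbounded. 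Transporting back, $\mathcal{B}(f,\phi^t|_{\Lambda'}) \subset \mathcal{B}(f,\phi^t)$ is unbounded, so $\mathcal{B}(f,\phi^t)$ is unbounded, which is the contrapositive of (i) $\Rightarrow$ (ii). Finally, once (ii) holds, (iii) follows from Liv\v{s}ic's theorem again, closing the equivalence.

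I expect the main obstacle to be purely bookkeeping: verifying that the constructed basic set $\Lambda'$ genuinely contains the orbit $\mathcal{O}_0$ witnessing non-triviality (so that $\tilde f$ is honestly non-coboundary on the section), and checking that $\tilde f$ inherits H\"older regularity from $f$ with a uniform exponent — the return time $t(x)$ is only H\"older (not smooth) across $\Lambda'\cap\mathcal{D}$, but it is bounded and H\"older because the sections are transverse and $\Lambda'$ is compact, so the composition and integral defining $\tilde f$ stay H\"older. In the non-transitive Anosov case the only extra wrinkle is producing, inside a single basic piece $\Omega_{i_0}$, a proper sub-basic-set containing two given closed orbits; this is standard, using that any two periodic orbits in the same basic set are homoclinically related (Remark \ref{Homoclinic}) together with the construction in \cite[Theorem 6.5.5]{Katok - Hasselblatt}.
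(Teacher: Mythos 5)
Your proof is correct in substance and follows the approach the paper is signaling (the corollary is stated right after the Poincar\'e-map machinery of Section \ref{reduction via Poincare Maps} and the reduction used for Theorem \ref{MainT2}, with the remark ``in the same spirit of Theorem \ref{mainT}''): reduce the flow to a discrete system via a Poincar\'e return map with disjoint cross-sections, transport the observable to $\tilde f(x)=\int_0^{t(x)} f(\phi^s(x))\,ds$, observe that closed-orbit integrals for the flow equal Birkhoff sums of $\tilde f$ over $\mathcal{P}$-periodic points, and invoke Theorem \ref{mainT} on the section.

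One structural remark: the detour through a proper sub-basic-set $\Lambda'=\Lambda(\theta,\omega)$ is unnecessary here. For a transitive Anosov flow, the whole manifold $N$ is already a compact, locally maximal, transitive hyperbolic set, so Lemma \ref{Good Cross Section 1} and Lemma \ref{poincare} can be applied directly to $N$, giving at once that $N\cap\mathcal{D}$ is a basic set for $\mathcal{P}$ with $\mathcal{B}(f,\phi^t)=\mathcal{B}(\tilde f,\mathcal{P},N\cap\mathcal{D})$; in the non-transitive case one applies the same to each spectral piece $\Omega_i$ and uses that $f$ not cohomologous to zero forces some $\Omega_{i_0}$ with a non-vanishing closed-orbit integral. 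The $\Lambda(\theta,\omega)$ construction in the paper is used elsewhere to discharge the non-arithmeticity hypothesis in Theorem \ref{MainT2}; Theorem \ref{mainT} carries no such hypothesis, so there is nothing to discharge and you may skip the second orbit $\mathcal{O}_1$ entirely. The remaining technical points you flag (regularity of $\tilde f$, which is handled by the implicit-function theorem on the smooth cross-sections and the H\"older regularity of the invariant foliations, and the identification of cohomology for the flow with cohomology of $\tilde f$ under $\mathcal{P}$ via closed-orbit integrals and Liv\v{s}ic) are real but routine, and you state them correctly.
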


\subsection{Some applications to geodesic flow}\label{geodesic flow}
Since the geodesic flow preserves the \emph{Liouville measure}, it is transitive (see \cite{Pa}). So, as a consequence of Theorem \ref{MainT2}, we obtain the Corollary \ref{Anosov geodesic flow}.

When $f$ is the Ricci Curvature (cf. \cite{doCarmo} for more details), then we obtain the following characterization (compare with \cite{IR2, Nina2, Romana}):
\begin{teo}\label{thm:curvature-rigidity}
Let \((M,g)\) be an Anosov manifold whose sectional curvature satisfies \(K \geq -c^2\) for some \(c \geq 0\). Then \(\mathcal{B}(\mathrm{Ric}+c^2, \phi^t)\) is bounded if and only if \((M,g)\) has constant sectional curvature \(-c^2\).
\end{teo}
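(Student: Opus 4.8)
The plan is to combine the flow rigidity of Corollary~\ref{C3-GS} with the pointwise sign information coming from the curvature bound, and then to read off constancy of $K$ from the vanishing of a nonnegative observable against the Liouville measure. Throughout, $\mathrm{Ric}$ denotes the averaged (doCarmo) Ricci curvature, i.e. $\mathrm{Ric}(x,v)=\frac{1}{n-1}\sum_{i=1}^{n-1}K(v,e_i)$ for an orthonormal basis $e_1,\dots,e_{n-1}$ of $v^{\perp}$, so that $\mathrm{Ric}\equiv -c^2$ in constant curvature $-c^2$; this normalization is what makes the statement meaningful, since otherwise $\mathrm{Ric}+c^2$ need not be nonnegative under $K\ge -c^2$.

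First I would record two elementary geometric facts about the observable $f:=\mathrm{Ric}+c^2$ on $SM$. The hypothesis $K\ge -c^2$ forces $\mathrm{Ric}(x,v)\ge -c^2$ for every $(x,v)\in SM$, so $f$ is \emph{nonnegative and continuous} (indeed smooth: it is polynomial in $v$ with coefficients smooth in $x$, hence in particular H\"older). Moreover, if $\mathrm{Ric}(x,v)=-c^2$ at some point, then, being an average of the quantities $K(v,e_i)\ge -c^2$, each of them equals $-c^2$; letting $v$ range over $S_xM$ forces $K\equiv -c^2$ at $x$. Thus $f\equiv 0$ on $SM$ if and only if $(M,g)$ has constant sectional curvature $-c^2$.

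The direction ($\Leftarrow$) is then immediate: constant curvature $-c^2$ gives $f\equiv 0$, hence $\mathcal{B}(\mathrm{Ric}+c^2,\phi^t_M)=\{0\}$ is bounded. For ($\Rightarrow$), suppose $\mathcal{B}(\mathrm{Ric}+c^2,\phi^t_M)$ is bounded. The geodesic flow of an Anosov manifold is an Anosov flow, and $f$ is H\"older, so Corollary~\ref{C3-GS} applies and yields that $f$ is cohomologous to zero; equivalently, $f=Xu$ for some continuous $u\colon SM\to\mathbb{R}$ that is differentiable along orbits, where $X$ is the geodesic vector field. Integrating against the Liouville measure $\mu$, which is $\phi^t_M$-invariant, gives
\[
\int_{SM}f\,d\mu=\int_{SM}Xu\,d\mu=\left.\frac{d}{dt}\right|_{t=0}\int_{SM}u\circ\phi^t_M\,d\mu=0.
\]
(Alternatively, one may invoke that $\mathcal{B}(f,\phi^t_M)=\{0\}$ together with density of periodic measures among invariant measures.) Since $f\ge 0$ is continuous and $\mu$ has full support on $SM$, we conclude $f\equiv 0$, and by the previous paragraph $(M,g)$ has constant sectional curvature $-c^2$.

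I do not expect a serious obstacle: once Corollary~\ref{C3-GS} is available the argument is short, and the genuine content is the elementary but decisive observation that the curvature bound makes $f$ pointwise nonnegative, so that a zero Liouville average against a fully supported invariant measure is enough to kill it entirely. The only points requiring care are the normalization of $\mathrm{Ric}$ noted above and the remark that this nonnegativity is precisely what fails for the bare observable $\mathrm{Ric}$, consistently with Remark~\ref{No bounded}, where $\mathcal{B}(\mathrm{Ric},\phi^t_M)$ is unbounded and stays away from $0$.
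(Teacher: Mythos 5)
Your argument is correct and follows essentially the same route as the paper: both directions hinge on Corollary~\ref{C3-GS} to reduce boundedness to the vanishing of the spectrum, and then on the pointwise nonnegativity of $\mathrm{Ric}+c^2$ coming from $K\ge -c^2$. The only divergence is in the final step: the paper reads $\mathcal{B}(\mathrm{Ric}+c^2,\phi^t)=\{0\}$ as saying that the nonnegative observable has zero integral over each closed orbit, hence vanishes on each closed orbit, and then uses density of periodic orbits in $SM$; you instead pass through the coboundary representation $f=Xu$ to get $\int_{SM}f\,d\mu_{\mathrm{Liouville}}=0$ and invoke full support of the Liouville measure. Both are valid and of comparable length, and you already note the more direct variant in parentheses. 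Your explicit remark on the $\tfrac{1}{n-1}$ normalization of $\mathrm{Ric}$ (needed for $\mathrm{Ric}+c^2\ge 0$) and the observation that $\mathrm{Ric}\equiv -c^2$ on $SM$ forces $K\equiv -c^2$ make explicit two small points that the paper leaves implicit.
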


\begin{proof}
From Corollary \ref{C3-GS}, if \(\mathcal{B}(\mathrm{Ric}+c^2,\phi^t)\) is bounded, 
    then \(\mathcal{B}(\mathrm{Ric}+c^2,\phi^t)=\{0\}\). Consequently, for every 
    periodic orbit we have
    \[
    \frac{1}{\tau_\theta}\int_{0}^{\tau_\theta}\mathrm{Ric}(\phi^s(\theta))\,ds = -c^2,
    \qquad \text{for all } \theta\in \mathrm{Per}(\phi^t).
    \]
    Since \(K_M \geq -c^2\), it follows that \(\mathrm{Ric}(\cdot) \geq -c^2\) pointwise.
    Combined with the identity above, this forces
    \[
    \mathrm{Ric}|_{\mathcal{O}(\theta)} \equiv -c^2 \quad \text{for every } \theta\in \mathrm{Per}(\phi^t).
    \]
    Because periodic orbits are dense in \(SM\), we conclude that \(\mathrm{Ric}(\cdot) \equiv -c^2\) 
    everywhere, and therefore \(K_M \equiv -c^2\).

\end{proof}

\begin{remark}\label{No bounded}
Recall that the unstable Lyapunov exponent for the geodesic flow is defined by
\[
\chi^{+}(\theta) := \lim_{t \to \infty} \frac{1}{t} \log \bigl\lvert \det \bigl( D\phi^t_{\theta}|_{E^u_\theta} \bigr) \bigr\rvert,
\]
where \(E^u\) denotes the unstable bundle \emph{(see \cite{IR2,IR1, Romana})}. It satisfies the inequality
\[
\chi^{+}(\theta) \leq (n-1)\sqrt{-\lim_{t\to\infty}\frac{1}{t}\int_{0}^{t} \operatorname{Ric}\bigl(\phi^s_M(\theta)\bigr)\,ds } .
\]
If \(\theta \in \operatorname{Per}(\phi^t)\) is a periodic point with period \(\tau_\theta\), this estimate simplifies to
\[
\chi^{+}(\theta) \leq (n-1)\sqrt{-\frac{1}{\tau_\theta}\int_{0}^{\tau_\theta} \operatorname{Ric}\bigl(\phi^s_M(\theta)\bigr)\,ds } .
\]
In the Anosov case, there exists a contraction constant \(\lambda \in (0,1)\) such that
\[
-\log \lambda \leq \chi^{+}(\theta) \quad \text{for all } \theta \in \operatorname{Per}(\phi^t).
\]
Consequently, \(\mathcal{B}(\operatorname{Ric}, \phi^t)\) is unbounded, and 
\[
\mathcal{B}(\operatorname{Ric}, \phi^t) \subset \Bigl(-\infty, -\dfrac{(\log\lambda)^2}{(n-1)^2}\Bigr].\]
\end{remark}

Recall that, for Anosov geodesic flows, the set of closed orbits is dense in \( SM \); consequently, closed geodesics form a dense subset of \( M \). Given a real function \( f \colon M \to \mathbb{R} \), the Birkhoff spectrum associated to \( M \) and \( f \) is defined by:
\[
\mathcal{B}_{M}(f) := \left\{ \oint_{\gamma_\theta} f : \gamma_{\theta} \text{ is a closed geodesic} \right\}.
\]

For a sufficiently regular \( f \), the spectrum \( \mathcal{B}_{M}(f) \)  provides a rigidity for Anosov metrics, as shown in the following theorem:
\begin{teo}\label{Rig in M}\emph{\cite[Theorem 1.1]{Dairbekov}}
Let \( (M,g) \) be an Anosov manifold. If \( f \in C^{\infty}(M) \) satisfies \( \mathcal{B}_{M}(f) = \{0\} \), then \( f \) is identically zero.
\end{teo}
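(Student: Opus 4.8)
The plan is to turn the closed-geodesic hypothesis into a \emph{transport equation} on the unit tangent bundle and then exploit the Anosov property through the Pestov identity. Write $\pi\colon SM\to M$ for the footpoint projection and lift $f$ to $F:=f\circ\pi\in C^\infty(SM)$, a function constant on each fiber $S_xM$. For every closed geodesic $\gamma_\theta$ one has $\oint_{\gamma_\theta}f=\oint_{\mathcal O_\theta}F$, so $\mathcal B_M(f)=\{0\}$ means precisely that $F$ has vanishing Birkhoff integral over \emph{every} periodic orbit of the geodesic flow $\phi^t_M$. Since $\phi^t_M$ is Anosov and $F$ is smooth, the Liv\v{s}ic theorem for flows together with the regularity theory for the Liv\v{s}ic cohomological equation (de la Llave--Marco--Moriy\'on) gives a \emph{smooth} $u\colon SM\to\mathbb R$ solving $Xu=F$, where $X$ generates the geodesic flow; this $u$ is unique modulo an additive constant because $\phi^t_M$ is transitive.

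The heart of the matter is to show that this equation, together with hyperbolicity, forces $f$ to vanish. I would decompose $u=\sum_{k\ge0}u_k$ into fiberwise spherical harmonics, $u_k$ being the degree-$k$ component in the velocity variable, and use the first-order splitting $X=X_++X_-$ with $X_\pm$ carrying degree $k$ to degree $k\pm1$. Matching Fourier degrees in $Xu=F$, and recalling that $F$ has pure degree $0$ (it is $f\in C^\infty(M)$ in the identification $H_0\cong C^\infty(M)$), yields $X_-u_1=f$ from degree $0$, $X_+u_0+X_-u_2=0$ from degree $1$, and $X_+u_{k-1}+X_-u_{k+1}=0$ for each $k\ge2$. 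The key analytic input---available because an Anosov manifold has no conjugate points and the stable/unstable bundles furnish globally bounded solutions of the Riccati equation along the flow---is the \emph{Pestov identity}: applied to $u$ it produces a nonnegative energy estimate that is consistent with a degree-$0$ right-hand side only if $u_k=0$ for all $k\ge2$. (This is exactly the mechanism behind Dairbekov and Sharafutdinov's analysis of the X-ray transform on functions.) With $u_2=0$ the degree-$1$ relation becomes $X_+u_0=0$, so $u_0$ is constant by connectedness of $M$; subtracting this constant we may assume $u=u_1$, i.e. $u(x,v)=g(\beta(x),v)$ for a smooth vector field $\beta$ on $M$.

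It remains to extract $f\equiv0$. Because $u_3=0$, the degree-$2$ relation becomes $X_+u_1=0$, which says the symmetric trace-free part of $\nabla\beta$ vanishes; that is, $\beta^\sharp$ is a \emph{conformal Killing field}. An Anosov manifold is not conformally diffeomorphic to the round sphere---indeed it is not even diffeomorphic to a sphere, having infinite fundamental group---so by the Lichnerowicz--Obata--Yano theorem every conformal Killing field on it is Killing; in particular $\operatorname{div}\beta=0$. Since the degree-$0$ relation identifies $f$ with $\tfrac1n\operatorname{div}\beta$, we conclude $f\equiv0$. (If one prefers to bypass conformal geometry: once $\beta^\sharp$ is Killing, the function $(x,v)\mapsto g(\beta(x),v)$ is $\phi^t_M$-invariant, hence constant by transitivity, hence $0$ by integration over a fiber, so $\beta\equiv0$ and $f\equiv0$ directly.)

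The step I expect to be the genuine obstacle is the middle one---passing from ``$u$ solves $Xu=f\circ\pi$'' to ``$u$ has Fourier degree $\le1$.'' This is not a formal manipulation: it is precisely where the Anosov hypothesis enters essentially, via the Pestov identity and the bounded Riccati solutions provided by the hyperbolic splitting (equivalently, the absence of conjugate points). Everything before it---the lift and Liv\v{s}ic---is routine, and everything after it---the conformal Killing and transitivity arguments---is elementary once the degree bound is in hand.
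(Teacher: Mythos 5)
The paper does not prove this statement: it is imported verbatim as \cite[Theorem~1.1]{Dairbekov}, labeled as such in the theorem header, and used as a black box in the proof of Theorem~\ref{integral_geodesic}. So there is no internal proof to compare against; what you have written is an attempt to reconstruct Dairbekov--Sharafutdinov's argument, and the overall scaffolding (lift to $SM$, smooth Liv\v{s}ic solution via de~la~Llave--Marco--Moriy\'on, vertical Fourier decomposition with $X = X_+ + X_-$, Pestov energy identity exploiting absence of conjugate points) is indeed the right framework.

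However, the final step has a genuine gap. Having reduced to $u = u_0 + u_1$ with $u_0$ constant and $X_+u_1 = 0$, you want $\operatorname{div}\beta = 0$ from ``$\beta$ is a conformal Killing field on a manifold not conformal to $S^n$, hence by Lichnerowicz--Obata--Yano $\beta$ is Killing.'' That is not what those theorems say. Obata's theorem (and the Lelong-Ferrand--Obata theorem) concludes that a conformal Killing field on a compact non-spherical manifold is \emph{inessential}: Killing for \emph{some} conformally rescaled metric $e^{2\sigma}g$, not for $g$ itself. Inessentiality gives $\tfrac{1}{n}\operatorname{div}_g\beta = -\beta\sigma$, which is not obviously zero. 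The Yano version that does conclude ``conformal Killing $\Rightarrow$ Killing (parallel)'' needs $\operatorname{Ric}\le 0$, which an Anosov metric need not satisfy. Your parenthetical ``bypass'' does not help because it presupposes that $\beta$ is already Killing, which is exactly the unproved step. In Dairbekov--Sharafutdinov, this is not an incidental final move: the Pestov-identity machinery is pushed to show directly that there are no nontrivial trace-free conformal Killing symmetric tensors (in every positive rank) on a closed Anosov manifold, which in particular forces $u_1 = 0$ and gives $u$ constant, hence $f = Xu = 0$; equivalently, for a degree-$0$ right-hand side the coboundary $u$ has vertical degree $\le m-1 = -1$, i.e.\ $u \equiv 0$ up to a constant. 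If you wish to keep the structure of your argument, replace the appeal to classical conformal geometry with that dynamical statement (itself a Pestov-identity application) and cite it; as written, the step from ``conformal Killing'' to ``Killing with respect to the given Anosov metric'' does not follow.
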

Using our methods, we obtain a significant generalization of Theorem \ref{Rig in M}:


\begin{proof}[\emph{\textbf{Proof of of Theorem \ref{integral_geodesic}}}]
Let \( f \in C^{\infty}(M) \). Then \( f \circ \pi \in C^{\infty}(SM) \), where \( \pi \colon SM \to M \) is the canonical projection. We observe that
\[
\mathcal{B}_{M}(f) = \mathcal{B}(f \circ \pi, \phi^t_{M}).
\]
\begin{itemize}
    \item[\text{(a)}] If $\mathcal{B}_{M}(f)$ is bounded, then \( \mathcal{B}(f \circ \pi, \phi^t_{M}) \) is bounded. By Corollary \ref{C3-GS}, it follows that \( \mathcal{B}(f \circ \pi, \phi^t_{M}) = \{0\} \). Hence \( \mathcal{B}_{M}(f) = \{0\} \), and the result follows from Theorem \ref{Rig in M}.
    \item[\text{(b)}] In this case, the flow version of Theorem \ref{Rigidity} implies that \( f \circ \pi \) is cohomologous to \( a_{i_0} \) for some index \( i_0 \). In particular, \( \mathcal{B}_{M}(f - a_{i_0}) = \{0\} \). Combining this with condition (a), we conclude that \( f \) must be constant and equal to \( a_{i_0} \).    
\end{itemize}

\end{proof}

\textbf{Acknowledgments:}
The author thanks Mingyang Xia for his invaluable insights, which were essential to the completion of this work.

\bibliographystyle{plain}

\begin{thebibliography}{999}





\bibitem[1]{Anosov}
D. Anosov, \textsl{Geodesic flows on closed Riemannian manifolds with negative curvature}, Proc. Steklov Inst. Math. \textbf{90}, 1967.


\bibitem[2]{CR2} A. Cantoral and S. Roma\~na, \textit{Geometric conditions to obtain Anosov geodesic flow for non-compact manifolds}, Dynamical Systems, \textbf{39(4)}, 592-609, 2024.

\bibitem[3]{Dairbekov} N.  Dairbekov and V. Sharafutdinov, \textit{Some problems of integral geometry on Anosov manifolds}, \newblock{\em Ergodic Theory and Dynamical Systems} \textbf{23}, 59--74, 2003.


\bibitem[4]{doCarmo} M. do Carmo, , \textit{Riemannian Geometry, Mathematics: Theory \& Applications}, Topics Differential
Geometry, vol. 1. Birkh\"auser, Boston, 1992.     


\bibitem[5]{Eberlein} Patrick Eberlein, \textit{When is a geodesic flow of Anosov type?}, \newblock{J. Differential Geom.} \textbf{8(3)} 437-463 1973.

\bibitem[6]{Shaobo} S. Gan, Y. Shi and M. Xia, \textit{On the density of Birkhoff sums for Anosov
diffeomorphisms}, \newblock{\em Science China Mathematics} \textbf{65(2)}, 319-330, 2022.


\bibitem[7]{Shaobo1} S. Gan, Y. Shi, \textit{Robustly topological mixing of Kan's map}, \newblock{\em Journal of Differential Equations} \textbf{266}, 7173--7196, 2019.

\bibitem[8]{Katok - Hasselblatt} A. Katok, B. Hasselblatt, \textit{Introduction to the Modern Theory of Dynam ical Systems}. \newblock{Cambridge University Press,} Cambridge, 1995.

\bibitem[9]{Livsic} A. N. Liv\v sic ; \textit{Homology properties of Y-systems}, \newblock{Math. Zametki} \textbf{10}, 758--763, 1971.

\bibitem[10]{IR2} I. Melo and S. Roma\~na, \textit{Some rigidity theorems for Anosov geodesic flows in Manifolds of Finite Volume}, \newblock{\em Qualitative Theory of Dynamical Systems} \textbf{23(114)}, 2024.

\bibitem[11]{IR1} I. Melo and S. Roma\~na, \textit{Contributions to the study of Anosov Geodesic Flows in non-compact manifolds}. \newblock{\em Discrete and Continuous Dynamical Systems} \textbf{40(9)}, 5149--1171, 2020.

\bibitem[12]{Nina2} N. Nina,  S. Roma\~na, \textit{Rigidity of Lyapunov Exponents for Geodesic Flows}.  \newblock{\em Journal of Differential Equation } \textbf{414}, 125-142, 2025.

\bibitem[13]{Pa} G. Paternain, \textit{Geodesic flows}. \newblock{Progress in Mathematics, Birkhauser,} Volume 180, 1999.

\bibitem[14]{Romana} S. Roma\~na, \textit{Curvature Rigidity Through Level Sets of Lyapunov
Exponents in Geodesic Flows}. arXiv:2507.02482, 2025.

\bibitem[15]{Romana1}  S. Roma\~na, \textit{On the Lagrange and Markov Dynamical Spectra for Anosov Flows in Dimension 3}. \newblock{\em Qual. Theory Dyn. Syst.} \textbf{21(19)}, 2022.

\bibitem[16]{Sakai} K. Sakai,  \textit{Shadowing properties of L-hyperbolic homeomorphisms} 
\newblock{\em Topology and its Applications}, \textbf{112(3)} 229-243, 2001.

\bibitem[17]{Sigmund1}
K. Sigmund, \textsl{Generic Properties of Invariant Measures for Axiom A-Diffeomorphism},
\newblock{Inventiones Math}, \textbf{11}, 99--109, 1970.

\bibitem[18]{Sigmund2}
K. Sigmund, \textsl{On the Connectedness of Ergodic Systems},
\newblock{ Manuscripta Math}, \textbf{22}, 27--37, 1977.






\bibitem[19]{ZouWei} R. Zou, H. Wei,  \textit{A finite Approximate Liv\v sic Theorem for Anosov Diffeomorphisms} \newblock{\em Journal of Applied Analysis and Computation}, \textbf{15(4)} 2185--2194, 2025.



\end{thebibliography}
\pagestyle{empty}


\noindent Sergio Roma\~{n}a\\
School of Mathematics (Zhuhai), Sun Yat-sen University, 519802, China\\
sergio@mail.sysu.edu.cn

\end{document}